%


\documentclass[12pt]{amsart}
\setlength{\textheight}{8in}
\usepackage[hmargin=3cm,vmargin=3.5cm]{geometry}


\usepackage{xspace,amssymb,amsfonts}
\usepackage{amsthm,amsmath,amscd,stmaryrd}


\usepackage[dvips]{epsfig}
\usepackage{graphicx}
\usepackage[all]{xy}
\SelectTips{cm}{}

\newcommand{\ig}[2]{\vcenter{\xy (0,0)*{\includegraphics[scale=#1]{fig/#2}} \endxy}}

\newcommand{\igc}[2]{\begin{center} \includegraphics[scale=#1]{fig/#2} \end{center}}

\usepackage{tikz}
\usetikzlibrary{calc}

\usepackage{pinlabel}



\RequirePackage{color}
\definecolor{myred}{rgb}{0.75,0,0}
\definecolor{mygreen}{rgb}{0,0.5,0}
\definecolor{myblue}{rgb}{0,0,0.65}

\RequirePackage{ifpdf}
\ifpdf
  \IfFileExists{pdfsync.sty}{\RequirePackage{pdfsync}}{}
  \RequirePackage[pdftex,
   colorlinks = true,
   urlcolor = myblue, 
   citecolor = mygreen, 
   linkcolor = myred, 
   pagebackref,
   bookmarksopen=true]{hyperref}
\else
  \RequirePackage[hypertex]{hyperref}
\fi



\newtheorem{thm}{Theorem}[section]

\newtheorem{prop}[thm]{Proposition}
\newtheorem{cor}[thm]{Corollary}
\newtheorem{claim}[thm]{Claim}

\newtheorem*{prop*}{Proposition}

\theoremstyle{definition}
\newtheorem{defn}[thm]{Definition}

\newtheorem{example}[thm]{Example}

\theoremstyle{remark}
\newtheorem{remark}[thm]{Remark}

\numberwithin{equation}{section}



\usepackage{bbm}


    \def\CM{{\mathbbm{C}}}
    \def\DM{{\mathbb{D}}}

  \def\hg{{\mathfrak h}}

    \def\NM{{\mathbbm{N}}}

    \def\RM{{\mathbbm{R}}}
    \def\SM{{\mathbb{S}}}

    \def\XM{{\mathbb{X}}}
    
    \def\ZM{{\mathbbm{Z}}}
\newcommand{\IM}{{\mathbbm{1}}}


  \def\ab{{\mathbf a}}  
    
    \def\CC{{\mathcal{C}}}
    \def\DC{{\mathcal{D}}}
  \def\eb{{\mathbf e}}

  \def\hb{{\mathbf h}}  \def\HC{{\mathcal{H}}}

    \def\KC{{\mathcal{K}}}

    \def\OC{{\mathcal{O}}}

%


\def\al{\alpha}
\def\be{\beta}

\def\Ga{\Gamma}
\def\de{\delta}

\def\la{\lambda}

\def\si{\sigma}

\def\ze{\zeta}

\let\phi=\varphi

\renewcommand{\tilde}{\widetilde}



\newcommand{\un}{\underline}
\newcommand{\ov}{\overline}

\newcommand{\ot}{\otimes}
\newcommand{\sqot}{\boxtimes}
\newcommand{\pa}{\partial}
\newcommand{\co}{\colon}
\renewcommand{\to}{\rightarrow}

\newcommand{\sumset}{\stackrel{\scriptstyle{\oplus}}{\scriptstyle{\subset}}}
\newcommand{\define}{\stackrel{\mbox{\scriptsize{def}}}{=}}

\renewcommand{\sl}{\mathfrak{sl}}


\DeclareMathOperator{\Kar}{\bf{Kar}}
\DeclareMathOperator{\Hom}{Hom}

\DeclareMathOperator{\End}{End}

\DeclareMathOperator{\grank}{grank}
\DeclareMathOperator{\gdim}{gdim}
\DeclareMathOperator{\std}{std}
\DeclareMathOperator{\reg}{reg}

\DeclareMathOperator{\Rep}{Rep}
\DeclareMathOperator{\Sym}{Sym}

\DeclareMathOperator{\Tr}{Tr}


\usepackage{tikz-cd}

\newcommand{\Zvv}{\ZM[v,v^{-1}]}
\newcommand{\SBim}{\SM{\rm{Bim}}}
\DeclareMathOperator{\For}{\mathcal{F}or}

%
%


\begin{document}

\title{Folding with Soergel bimodules}

\author{Ben Elias} \address{University of Oregon, Eugene}

\begin{abstract} We give a gentle introduction to the concept of folding. That is, we provide an elementary discussion of equivariant categories, their weighted Grothendieck groups, and
the technical aspects of computing with them. We then perform the computations required to confirm that quasi-split Hecke algebras with unequal parameters are categorified by
equivariant Soergel bimodules, in almost every case. \end{abstract}

\maketitle


\section{Introduction}
\label{sec-intro}

Let $\si$ be an automorphism of a Dynkin diagram $\Ga$. Let $W$ be the Weyl group corresponding to $\Ga$, which inherits an action of $\si$. It is an old theorem of Steinberg
\cite{Steinberg} that the $\si$-fixed elements of $W$ themselves form a Coxeter group $W^\si$. Similar statements can be made about the invariant parts of the corresponding Lie algebra or
root system, under the restriction that the orbits of $\si$ contain only mutually-distant vertices in $\Ga$. The most familiar examples of this are the embeddings of $B_{k+1}$ inside
$A_{2k+1}$, and of $C_{k+1}$ inside $D_{k+2}$, where in both examples the Dynkin diagrams of the larger group are visually ``folded'' by an involution $\si$ to produce the Dynkin diagram
of the subgroup. Thus, \emph{folding} is the word chosen to describe the general phenomenon that constructions attached to $W$, when considered $\si$-invariantly or $\si$-equivariantly,
can be related to constructions attached to $W^\si$.

Automorphisms of Dynkin diagrams and the induced automorphisms of Chevalley groups were studied by Chevalley in the 1950s \cite{Chevalley}. Steinberg \cite{Steinberg} introduced the
twisted Chevalley groups which are obtained by taking invariants under these automorphisms. The modern, categorical approach was pioneered by Lusztig in his books
\cite{LuszQuantum,LuszUnequal03}: it studies $\si$-equivariant categories and their decategorifications. This will be the version of folding discussed in this paper.

A primary example of folding is the categorification of quantum groups due to Lusztig \cite{LuszQuantum}. For a simply-laced Dynkin diagram $\Ga$, an appropriate category of perverse
sheaves on the quiver variety corresponding to $\Ga$ (with arbitrary orientation) will categorify the positive half of the quantum group. For non-simply-laced Dynkin diagrams, this
construction will not work. However, every non-simply-laced Dynkin diagram (i.e. types $B$, $C$, $F$, and $G$) can be folded from a simply-laced Dynkin diagram. The automorphism $\si$
lifts to an automorphism of the quiver variety, and one can consider $\si$-equivariant perverse sheaves. This equivariant category will have a Grothendieck group that is too large, but
imposing some \emph{trace relation} on this Grothendieck group will yield the positive half of the quantum group for the folded Dynkin diagram. In this example, folding is crucial as a
means of approaching non-simply-laced Dynkin diagrams.

Another example of folding is the categorification of Hecke algebras, due to Kazhdan-Lusztig (with the folding part due to Lusztig \cite{LuszUnequal03}). For any Weyl group $W$, the
category of perverse sheaves on the flag variety of $W$, stratified with respect to the Bruhat decomposition, will categorify the Hecke algebra $\HC(W)$ with equal parameters. Again, the
automorphism $\si$ lifts to an automorphism of the flag variety. The Grothendieck group of the category of $\si$-equivariant perverse sheaves, after imposing the trace relation, is
isomorphic to the Hecke algebra $\HC(W^\si, L)$ with unequal parameters. Here, $L$ is the \emph{weight function} on $W^\si$ determined by the embedding, which sends $w \in W^{\si}$ to its
length viewed as an element of $W$ (as opposed to its length as an element of the Coxeter group $W^\si$). In this example, non-simply-laced Weyl groups can be approached in two ways, using
their intrinsic flag varieties or using folding, and this categorifies two very different algebras! One should not confuse $\HC(W^\si)$ with $\HC(W^\si, L)$, but should think of them as
being essentially unrelated.

In this paper, we refer to the Grothendieck group of an equivariant category with a trace relation imposed as a \emph{weighted Grothendieck group}.

The main examples of folding have all been geometric, and for this reason it is traditionally limited to Weyl groups and affine Weyl groups, Dynkin diagrams and affine Dynkin diagrams.
However, there is no essential reason for this limitation. Generalizing Steinberg's theorem, Lusztig \cite[Appendix]{LuszUnequal14} and H\'ee \cite{Hee} have shown the following: when a
finite group $G$ acts on a Coxeter group $W$ by automorphisms, preserving the set $S$ of simple reflections, and the simple reflections in the orbit of any $s \in S$ generate a finite
subgroup, then the invariant subgroup $W^G$ is itself a Coxeter group. An embedding of one Coxeter group inside another in this fashion is called a \emph{quasi-split embedding}, and the
corresponding weight function $L$ is called \emph{quasi-split}.

\begin{remark} The literature also occasionally makes assumptions about the action of $G$ which are not truly necessary. It is sometimes assumed that the orbits of $G$ consist of
mutually-distant simple reflections. It is also sometimes assumed that $G$ is cyclic, generated by $\si$; there are geometric reasons that this is reasonable, because one can often
interpret $\si$ as a modified Frobenius automorphism in finite characteristic. Regardless, even when $G$ is not cyclic, a choice of a \emph{weighting element} $\si \in G$ is required to
impose a trace relation on the Grothendieck group. \end{remark}

Meanwhile, Soergel \cite{Soer07} has given an algebraic (not geometric) definition of a graded monoidal category $\SBim(W)$ attached to any Coxeter group $W$, whose objects are now
called \emph{Soergel bimodules}, and he proved under mild restrictions that the Grothendieck group of $\SBim(W)$ is isomorphic to $\HC(W)$. For Weyl groups, with coefficients in a field
of characteristic zero, Soergel proved that $\SBim(W)$ is equivalent to the aforementioned category of perverse sheaves.

Thus it is very natural to ask whether equivariant Soergel bimodules $\SBim(W)_G$ should categorify Hecke algebras with unequal parameters (in the quasi-split case). The goal of this
paper is to give an elementary, computational proof of this fact. We give a precise statement in Theorem \ref{thm:main}.

Continuing work of the author, Khovanov, and Libedinsky \cite{EKho, ECathedral, LibRA} in special cases, the author and Williamson have provided a presentation of $\SBim(W)$ by
generators and relations, using the language of planar diagrammatics, for an arbitrary Coxeter group $W$ \cite{EWGr4sb}. The new mathematical content in this paper is a series of
calculations performed using this diagrammatic calculus, which explicitly computes decompositions of certain tensor products in the equivariant category $\SBim(W)_G$. We hope this paper
serves as an advertisement for the power of diagrammatics, and its ability to effectively address questions even in equivariant categories. Interesting results can be reduced to
elaborate computational exercises.

\begin{remark} \label{dontconfuse} Suppose that $G$ acts on the Coxeter system $(W,S)$. As noted above, one should not confuse $\SBim(W)_G$, the category of equivariant Soergel
bimodules for $W$, with $\SBim(W^G)$. Even though both categories have Grothendieck groups related in some way to $W^G$, the categories are extremely different and wholly unrelated. It
should be possible to derive a diagrammatic calculus for the equivariant category itself, though we have not done this here. \end{remark}

The idea to examine equivariant Soergel bimodules, being as natural as it is, was independently pursued by multiple parties. Since the proof of the Soergel conjecture (in characteristic
zero) by the author and Williamson \cite{EWHodge}, Soergel bimodules have become a more tractable and attractive place to work. Soon after \cite{EWHodge} appeared, Lusztig updated his
book \cite{LuszUnequal14} to account for Soergel bimodules, adding a section on equivariant Soergel bimodules and also giving a proof of the above result. Our approach is quite
different from Lusztig's, being entirely computational in nature, and so we believe our independent proof will still be quite useful. While our proof as stated does rely on the Soergel
conjecture, our computations do not, and thus this paper will be important in future analysis of equivariant Soergel bimodules in situations, such as finite characteristic, where the
Soergel conjecture fails.

\begin{remark} The statement that $\SBim(W)_G$ has weighted Grothendieck group isomorphic to $\HC(W^G,L)$ is false in general. It will be true when $G$ is abelian and the weighting
element $\si \in G$ acts transitively on the $G$-orbits in $S$; under these conditions, we reduce to the case of a cyclic group generated by $\si$ in \S\ref{sec-eq-and-wt}. When $G$ is
abelian and $\si$ is the identity, then the weighted Grothendieck group is isomorphic to $\HC(W)^G$ instead. For general $G$ and $\si \in G$, the weighted Grothendieck group of
$\SBim(W)_G$ is some genuinely new algebra, which should have a number of attractive properties (like cell theory and a canonical basis). It remains to see whether these ``exotic Hecke
algebras" are worth studying. Regardless, the computations performed in this paper will help one to compute the weighted Grothendieck group in general, such as for the action of the
symmetric group $S_3$ on $D_4$. \end{remark}

When $W$ is a Weyl group or affine Weyl group, our main theorem already followed from (highly nontrivial) geometric considerations. Allowing for general Coxeter groups does extend the
generality of the notion of folding slightly. For example, one can now view the Coxeter group of type $A_1$ as the invariants inside the Coxeter group of type $I_2(m)$ (the dihedral
group) under the non-trivial automorphism of $I_2(m)$. However, this is effectively the only new example amongst irreducible, finite Coxeter groups, as the Coxeter systems of type $H_3$
and $H_4$ have no automorphisms. One can also add new examples of folding amongst reducible Coxeter groups, such as folding $k$ copies of $H_3$ into a single copy under an automorphism
which permutes the copies cyclically; such examples are not usually referred to as folding in the literature, but we do treat them here.

A similar question can be posed for quantum groups, where Khovanov-Lauda-Rouquier's quiver Hecke algebras \cite{KhoLau09, Rouq2KM-pp} give an algebraic categorification of the positive
half of the quantum group, and agree with perverse sheaves on quiver varieties in simply-laced type by work of Varagnolo-Vasserot \cite{VarVas}. It should be possible to perform the
analogous computational analysis of the equivariant quiver Hecke category in simply-laced type. McNamara \cite{McNamaraFolding} has a recent paper to this effect, although with
different methods (he exploits the crystal structure on simple objects to prove his main theorem). For non-simply-laced Dynkin diagrams, this would provide two distinct
categorifications of the positive half of the quantum group: the equivariant, geometric, folding categorification which will categorify Lusztig's canonical basis, and the non-geometric
categorification which categorifies a different positive basis. As with Remark \ref{dontconfuse}, these two categories should not be confused.

There are some numerical artifacts which arise freely from categorification results. For example, the Soergel conjecture \cite{EWHodge} demonstrated that the indecomposable objects in
$\SBim(W)$ descend to the Kazhdan-Lusztig basis of $\HC(W)$. One consequence is that the structure constants of $\HC(W)$, with respect to this basis, are non-negative integers, being
the dimensions of certain multiplicity spaces. The dimension is the trace of the identity map of the multiplicity space. Meanwhile, ``multiplicity spaces" in the equivariant category
are representations of a stabilizer group (this is a lie, but philosophically true), and the trace relation on the Grothendieck group implies that structure constant of $\HC(W^G,L)$ are
traces of other elements acting on multiplicity spaces. When $G = \ZM / 2 \ZM$, this statement is a form of \emph{signed positivity}, that a (possibly negative) number is the difference
of two positive numbers, the dimensions of the $+1$ and the $-1$ eigenspace.

There is another ``folding-esque'' scenario, due to Lusztig and Vogan \cite{LusVog12}. Here, one begins with an involution $\si$ of a Coxeter system $(W,S)$, and considers the
involution of $W$ given by $w \mapsto \si(w^{-1})$. It is easy to lift this involution to an anti-monoidal involution on $\SBim(W)$, using adjunction to replace the inverse. In a
follow-up paper, we will apply the same techniques to this equivariant category, and prove that the equivariant Grothendieck group (with a trace relation) is isomorphic to the
Lusztig-Vogan representation. This proves a signed positivity conjecture from \cite{LusVog12}. Just like for this paper, Lusztig and Vogan have already proven this result themselves
\cite{LusVog14} using Soergel bimodules and the results of \cite{EWHodge}, though we hope our direct computational approach will still be illuminating.

We now discuss the contents of this paper. This paper is purposely written for the reader who may be familiar with diagrammatic categorification, but less familiar with equivariant
categories and Coxeter groups.

Chapter \ref{sec-heckebackground} contains an introduction to quasi-split embeddings of Coxeter groups, and to Hecke algebras with unequal parameters. The least well-known aspect of
this introduction is a non-standard presentation of the Hecke algebra with unequal parameters in the quasi-split case, using the Kazhdan-Lusztig generators. The braid relations are
transformed into certain unfamiliar relations, one for each finite dihedral group. These are the relations we categorify in \S\ref{sec-folding}.

Chapter \ref{sec-eq-and-wt} discusses the generalities of equivariant categories and introduces the idea of a \emph{weighted Grothendieck group}. We believe that weighted Grothendieck groups are slightly easier to think about and work with than the trace relation mentioned above. It also allows for more general discussion, beyond the case
of cyclic groups. Thus we have presented the ideas in a different way than is done in Lusztig's books; however, the main ideas are clearly indebted to him. We provide some general
results that help to compute the weighted Grothendieck groups of mixed categories. In particular, for an abelian group $G$ with an element $\si$ which acts transitively on each
$G$-orbit, computing the $\si$-weighted Grothendieck group of a mixed category will reduce to the trace relation of Lusztig. Most of the results in this chapter should be well-known to
the experts, but we were not able to find a comprehensive reference, so we have tried to give an exposition for the apprentice.

Chapter \ref{sec-sbim} gives a terse summary of the category of Soergel bimodules, its diagrammatic presentation due to the author and Williamson, and the Soergel categorification
theorem.

Finally, chapter \ref{sec-folding} performs numerous computations in the category of equivariant Soergel bimodules, for group actions where $W^G$ is dihedral. These computations are
sufficient to categorify the braid-like relations of the Hecke algebra with unequal parameters. Then, using the Soergel conjecture to deduce that $\SBim(W)$ is a mixed category, and
using the abstract results of \S\ref{sec-eq-and-wt}, we deduce our main theorem.

\begin{remark} There is an action of $\ZM / 2 \ZM$ on the Weyl group of type $F_4$, with invariant subgroup $I_2(8)$. We have not even attempted to perform the computations here. This
missing case should follow by analogous computations. Regardless, the result has already been proven by Lusztig. \end{remark}

Again, we do rely heavily on the truth of the Soergel conjecture for our categorification result. However, we do not use any of the Hodge-theoretic machinery required to prove the
Soergel conjecture in \cite{EWHodge}. Moreover, our computations do not rely on the Soergel conjecture, and apply in more generality.

{\bf Acknowledgments.} The author would like to thank George Lusztig and Meinolf Geck for useful conversations, as well as Geordie Williamson, whose mellifluous influence is hardly
superfluous. This research was performed while the author was supported by an NSF postdoctoral fellowship, DMS-1103862.

\section{Hecke algebras and unequal parameters}
\label{sec-heckebackground}

We assume the reader is familiar with the notion of a Coxeter group, as well as the notions of length, reduced expressions, and the Bruhat order. An excellent introduction can be found
in \cite{Humphreys}. We now give some background on weighted Coxeter groups and Hecke algebras with unequal parameters, following Lusztig's book \cite{LuszUnequal03}.

\subsection{Weighted Coxeter groups}
\label{subsec-wtedCox}

Fix a Coxeter system $(W,S)$. For a pair of simple reflections $s,t \in S$, we let $m_{st}$ denote the order of $(st)$. We let $\ell$ denote the length
function. For simplicity, we assume $S$ is finite, though everything can be adapted to the case where $S$ is infinite.

\begin{defn} A \emph{weight function} is a map $L \co W \to \ZM$ such that $L(vw) = L(v) + L(w)$ whenever $\ell(vw) = \ell(v)+\ell(w)$. A Coxeter system paired
with a weight function $(W,S,L)$ is called a \emph{weighted Coxeter system}. \end{defn}

A length function is determined by the values of $L(s)$ for $s \in S$. It is easy to deduce that $L(s) = L(t)$ whenever $m_{st}$ is odd. We say that $L$ is \emph{positive} if
$L(s)>0$ for all $s \in S$, which we assume henceforth.

\begin{example} The length function $\ell$ is also a weight function, and weighted Coxeter systems of the form $(W,S,\ell)$ are called \emph{split}. \end{example}

Non-split weight functions arise naturally when one Coxeter group is embedded inside another in a ``Coxeter-esque'' way.

\begin{defn} Let $(W',S')$ be a Coxeter system. A partition of $S'$ into subsets $I(s)$, parametrized by elements $s$ of a set $S$, is called a \emph{finitary partition} if the parabolic subgroup generated by $I(s)$ is finite for each $s \in S$. We identify the set $S$ with a subset of $W'$, letting $s = w_{I(s)}$, the longest element in the parabolic subgroup for $I(s)$.  \end{defn}

\begin{defn} \label{defn:embedded} Let $(W',S')$ be a Coxeter system, and $I$ a finitary partition parametrized by $S$. Let $W$ be the subgroup of $W'$ generated by $s \in S$. It is
possible that $(W,S)$ is also a Coxeter system, in which case we call this setup a \emph{Coxeter embedding}, and let $\phi_I \co W \to W'$ denote the inclusion map. It equips $(W,S)$
with a positive weight function, called the \emph{embedded weight function}, defined by $L(w) = \ell(\phi_I(w))$. \end{defn}

\begin{defn} \label{defn:stdemb} A Coxeter embedding where, for each $s \in S$, the set $I(s)$ consists only of commuting simple reflections will be called a \emph{standard Coxeter embedding}. In this case, $w_{I(s)} = \prod_{t \in I(s)} t$, and $L(s) = \#I(s)$. \end{defn}

When discussing Coxeter embeddings we will often omit $\phi_I$ from our notation, thus letting $s \in S$ denote both an element in $W$ and its image in $W'$.

\begin{example} \label{dihedral to A} Let $(W',S')$ have type $A_n$, with $S' = \{s_1,s_2,\ldots,s_n\}$. Let $(W,S)$ have type $I_2(n+1)$, a dihedral group with $S = \{t,u\}$ and
$m_{tu}=n+1$. Set $I(t) = \{s_i\}_{i \text{ even}}$ and $I(u) = \{s_i\}_{i \text{ odd}}$. This yields a standard Coxeter embedding. This embedding is related to the 2-sided cell in $I_2(n+1)$ consisting of elements with a unique reduced expression, as shown by Lusztig \cite{LusSqint}. \end{example}

\igc{1}{dihedraltoA}

Sometimes the partition of $S'$ into subsets comes from the orbits of a group action. As the example above shows, this is not always the case.

\begin{defn} \label{defn:quasisplit} Let $G$ be a finite group acting on a Coxeter system $(W', S')$. In other words, $G$ acts on the set $S'$, with $m_{st} = m_{g(s) g(t)}$. A Coxeter
embedding where $S$ is in bijection with the orbit space $S'/G$, and $I(s)$ is the corresponding orbit, will be called \emph{quasi-split}. In this case, $W = (W')^G$ is the subgroup of
$G$-invariants. If we want to emphasize the group, we will call the embedding \emph{$G$-split}. \end{defn}

It is known that every finitary partition of $S'$ which comes from the orbits of a group action gives rise to a Coxeter embedding. See \cite{GecIan} for an elementary proof. However,
we do not know whether a general finitary partition always gives rise to a Coxeter embedding. We were unable to find this result, or a counterexample, in the literature.

There is a classification of quasi-split embeddings where the Coxeter groups involved are finite. The table on \cite[p227]{GecJac} lists the possibilities when $W'$ is a connected
Weyl group. Allowing for connected non-crystallographic finite Coxeter groups only adds one family of examples (Example \ref{dihedraltoA1} below).

\begin{example} \label{B to A} The group $\ZM/2\ZM$ acts on the Coxeter system of type $A_{2k+1}$, and the invariants have type $B_{k+1}$. This is a standard quasi-split embedding.
If the simple reflections in type $B_{k+1}$ are labeled $t_i$ for $1 \le i \le k$ and $u$, where $\{t_i\}$ generates a subgroup of type $A_{k}$, then the embedded weight function
satisfies $L(t_i)=2$ and $L(u)=1$.

\igc{1}{BtoA}
\end{example}

\begin{example} \label{B to A other} The group $\ZM/2\ZM$ acts on the Coxeter system of type $A_{2k+2}$, and the invariants have type $B_{k+1}$. This is a non-standard quasi-split embedding, as the middle orbit has type $A_2$ instead of $A_1 \times A_1$. In this example, $L(t_i)=2$ and $L(u) = 3$. \end{example}

\begin{example} \label{B to D} The group $\ZM/2\ZM$ acts on the Coxeter system of type $D_{k+2}$, and the invariants have type $B_{k+1}$. This is a standard quasi-split Coxeter embedding, with $L(t_i)=1$ and $L(u)=2$.

\igc{1}{BtoD} \end{example}

\begin{example} \label{E to F} The group $\ZM / 2 \ZM$ acts on $E_6$ with invariant subgroup $F_4$. \end{example}

\begin{example} \label{star shaped} Let $(W',S')$ be a star-shaped Coxeter group. That is, $S' = \{s_1,s_2, \ldots, s_n, v\}$ where $m_{s_i s_j} = 2$ and $m_{s_i v}=3$ for all $i,j$.
The symmetric group $S_n$ or its cyclic subgroup $\ZM / n \ZM$ will permute the spokes of the star, giving a standard quasi-split embedding whose invariant subgroup will be dihedral. If
$t$ corresponds to the vertex and $u$ corresponds to the orbit containing the spokes, then $L(t)=1$ and $L(u)=n$. Moreover, $m_{tu}=3$ when $n=1$, $m_{tu}=4$ when $n=2$, $m_{tu}=6$ when
$n=3$, and $m_{tu}=\infty$ when $n \ge 4$. The embedding of $G_2$ inside $D_4$ is the special case $n=3$.

\igc{1}{starshaped} \end{example}

\begin{example} \label{dihedraltoA1} The group $\ZM / 2 \ZM$ acts on the dihedral group $I_2(m)$ for $m < \infty$, with invariant group of type $A_1$. \end{example}

\begin{example} \label{F to B} The group $\ZM / 2 \ZM$ acts on $F_4$ with quotient $I_2(8)$. \end{example}

Let us list some other examples of quasi-split embeddings.

\begin{example} \label{affine inclusions} There are many examples of inclusions of affine Weyl groups, analogous to some of the embeddings of finite Weyl
groups above. \end{example}

\begin{example} \label{many copies} Let $(W,S)$ be a Coxeter group, and let $(W',S')$ be the $k$-fold disjoint multiple of $W$. In order words, $S' = S \times
\{1,\ldots,k\}$, with $m_{(s,i) (t,i)} = m_{st}$ and $m_{(s,i) (t,j)} = 2$ when $1 \le i \ne j \le k$. Let $I(s) = \{(s,i)\}_{1 \le i \le k}$. Clearly this
gives a (diagonal) standard quasi-split embedding of Coxeter groups, with $L(s)=k$. \end{example}

\begin{example} \label{affine shrinking} Suppose $k \ge 3$. The Dynkin diagram of the affine Weyl group $\tilde{A}_{nk-1}$ looks like a circle with $(nk)$ nodes. It has an action of $\ZM / n \ZM$, with
invariant subgroup of type $\tilde{A}_{k-1}$. \end{example}

\begin{example} \label{universal} Let $(W,S)$ be a universal Coxeter group (that is, $m_{st}=\infty$ for all $s,t \in S$). One can obtain any positive weight function $L$ as an embedded
weight function for a standard quasi-split embedding, in many ways. A simple such realization is to let $S'$ be a complete multipartite graph, where an edge means $m_{uv}=\infty$, and
the lack of an edge means $m_{uv}=2$. \end{example}

For fun and edification, we list some non-quasi-split embeddings.

\begin{example} \label{A1 to any} There is an embedding of $A_1$ into any finite Coxeter group, sending the generator to the longest element. This is quasi-split only for ($k$-fold
products of) dihedral groups. \end{example}

\begin{example} \label{G2 to B3} There is an embedding of $G_2$ into $B_3$. It is standard but not quasi-split. \end{example}

\begin{example} \label{H4 to E8} There is a standard, non-quasi-split embedding of $H_4$ into $E_8$, where $L(s)=2$ for all simple reflections. There is a similar embedding of $H_3$
into $D_6$. These were shown to me by Lusztig, and like Example \ref{dihedral to A} above are associated to the 2-sided cell of elements with a unique reduced expression in $H_3$ and $H_4$. See \cite[Remark 3.9]{LusSqint}. \end{example}

\begin{example} \label{B to A weird} Embed $B_2$ into $A_n$ by letting $I(t) = \{s_1, s_n\}$ and $I(u) = \{s_i\}_{i=2}^{n-1}$. This is a non-standard, non-quasi-split Coxeter embedding,
for which the longest element of $B_2$ is not sent to the longest element of $A_n$. \end{example}.

\begin{remark} Categorifying Hecke algebras with unequal parameters for non-quasi-split embeddings is a major open question. Examples \ref{dihedral to A} and \ref{H4 to E8} are
particularly important, because they might provide an explanation for the apparent ``geometric'' behavior of non-crystallographic Coxeter groups like $I_2(m)$ and $H_4$. \end{remark}

\subsection{Quasi-split embeddings and cyclic groups}
\label{subsec-quasisplit}

When quasi-split embeddings are discussed in the literature, it is sometimes implicitly assumed that the group $G$ which acts is cyclic, generated by an element $\si$. This is not much
of an assumption, for the following reason. The embedded group $(W,S,L)$ only depends on the orbits of the action, and not on the group action itself, and every quasi-split embedding
can be realized for a cyclic group. This is evidenced by the comprehensive list of examples above. The reason that cyclic groups are important is that the original definition of
quasi-split embeddings came from embeddings of algebraic groups in finite characteristic, and $\si$ arises from the Frobenius automorphism. See \cite[\S 16]{LuszUnequal03} for more details.

Later in this paper we will define a category $\SBim_G$ attached to any Coxeter system $(W', S')$ with a group action by a group $G$, and a weighted equivariant Grothendieck group
corresponding to an element $\si \in G$. Now, it is not just the orbits, but the group itself which plays a significant role. One hopes that this weighted Grothendieck group is
isomorphic to the Hecke algebra with unequal parameters corresponding to the invariant subgroup for the Coxeter embedding (to be defined soon). We will prove in
\S\ref{subsec-quadraticII} that this is only possible when $\si$ acts transitively on each orbit, so that the orbits of the cyclic subgroup $\langle \si \rangle$ agree with the orbits
of $G$. This is one reason why one may wish to assume that $G$ is cyclic.

\begin{example} There is an embedding of $B_2$ into $A_3 \times A_3$, with $L(s)=2$ and $L(t)=4$. These orbits could arise from an action of $\ZM/4\ZM$ or an action of $\ZM/2\ZM \times
\ZM/2 \ZM$; this latter group does not contain any element of order 4, so no cyclic subgroup will have the same orbits. \end{example}

However, even when an element $\si \in G$ exists which acts transitively on each orbit, the $G$-equivariant $\si$-weighted Grothendieck group and the $\langle \si \rangle$-equivariant
$\si$-weighted Grothendieck group need not be equal.

The equivariant Grothendieck groups attached to actions of non-cyclic groups will not be Hecke algebras in the traditional sense. However, they may still be algebras with similarly
attractive features, such as an interesting cell theory. It remains to be seen whether these algebras merit further investigation.

\subsection{Dihedral quasi-split embeddings}
\label{subsec-quasisplit2}

Let us make a quick note about the possibilities for finite dihedral subgroups in quasi-split embeddings, which is also evident from the classification above.

\begin{prop} \label{dihedral possibilities} Suppose one has a quasi-split embedding $W \to W'$, and $s,t \in S$ with $m_{st}$ finite. The following list encapsulates all possibilities
for $I(s) \cup I(t)$. \begin{itemize}
\item $m_{st}$ is arbitrary, and $I(s) \cup I(t)$ is a disjoint union of $k$ copies of $I_2(m_{st})$. One has $L(s) = L(t) = k$.
\item $m_{st}=2$, and $I(s) \cup I(t)$ is a disjoint union of $k+l$ copies of $A_1$. One has $L(s) = k$ and $L(t) = l$.
\item $m_{st}=4$, and $I(s) \cup I(t)$ is a disjoint union of $k$ copies of $A_3$. One has $L(s)=k$ and $L(t)=2k$.
\item $m_{st}=4$, and $I(s) \cup I(t)$ is a disjoint union of $k$ copies of $A_4$. One has $L(s)=3k$ and $L(t) = 2k$.
\item $m_{st}=6$, and $I(s) \cup I(t)$ is a disjoint union of $k$ copies of $D_4$. One has $L(s)=k$ and $L(t)=3k$.
\item $m_{st}=8$, and $I(s) \cup I(t)$ is a disjoint union of $k$ copies of $F_4$. One has $L(s)=4k$ and $L(t) = 2k$.
 \end{itemize} In particular, $L(s) = L(t)$ unless $m_{st}=2,4,6,8$. \end{prop}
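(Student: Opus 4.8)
The plan is to localise the statement to the rank-two parabolic cut out by the two orbits, and then to read the list off from the classification of finite Coxeter groups together with their diagram automorphisms. First I would pass from $W'$ to its standard parabolic subgroup $W'_J$, where $J:=I(s)\cup I(t)$. The group $G$ preserves $J$ and acts on $W'_J$, having exactly $I(s)$ and $I(t)$ as its orbits on $J$; by the structure theory of quasi-split embeddings (H\'ee \cite{Hee}, Geck--Iancu \cite{GecIan}) the fixed subgroup $(W'_J)^G$ is generated by the longest elements of those two orbits, so it equals $\langle s,t\rangle\cong I_2(m_{st})$, and moreover $W'_J$ is finite precisely because $m_{st}$ is. Hence I may assume from the outset that $W'$ is finite with $S'=J$, and that $(W')^G\cong I_2(m_{st})$.

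Next I would decompose $W'=\prod_i W'_{J_i}$ into irreducible components. The finite group $G$ permutes the components, and for a single $G$-orbit $\mathcal O$ of components, with representative $J_1$ and stabiliser $G_1=\mathrm{Stab}_G(J_1)$, the $G$-invariants of $\prod_{J_i\in\mathcal O}W'_{J_i}$ form a twisted diagonal copy isomorphic to $(W'_{J_1})^{G_1}$. The structural fact I would invoke is that folding a \emph{connected} Coxeter diagram along a group of diagram automorphisms yields an \emph{irreducible} Coxeter system; therefore each $G$-orbit of components contributes exactly one irreducible factor to $(W')^G\cong I_2(m_{st})$, and the simple reflections of that factor are the longest elements of the $G_1$-orbits on the vertices of $J_1$. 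Since $I_2(m_{st})$ has rank two, it is a product of at most two such factors.

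The case $m_{st}\ge 3$ is where the work lies: here $I_2(m_{st})$ is irreducible of rank two, so there is a single $G$-orbit of components, i.e.\ $W'\cong X^{\sqcup k}$ for a connected finite-type Coxeter diagram $X$, with $G$ permuting the $k$ copies and inducing on one of them a group $H$ of diagram automorphisms with exactly two vertex-orbits and fixed subgroup $I_2(m_{st})$. Going down the short list of connected finite Coxeter diagrams, the pairs $(X,H)$ with this property are exactly $X=I_2(m)$ with $H$ trivial (whence $m_{st}=m$, arbitrary), $X=A_3$ with $H=\ZM/2\ZM$ ($m_{st}=4$), $X=A_4$ with $H=\ZM/2\ZM$ ($m_{st}=4$), $X=D_4$ with $H=\ZM/3\ZM$ or $S_3$ ($m_{st}=6$, the triality folding), and $X=F_4$ with $H=\ZM/2\ZM$ ($m_{st}=8$). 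In each case $L(s)$ and $L(t)$ are $k$ times the lengths of the longest elements of the two vertex-orbits of $H$ --- that is, $k$ times one of the pairs $(1,1)$, $(1,2)$, $(2,3)$, $(1,3)$, $(2,4)$ --- which is exactly the last five bullets. The case $m_{st}=2$ is then immediate: $I_2(2)=A_1\times A_1$ has two rank-one factors, so there are exactly two $G$-orbits of components, each folding to $A_1$ and hence a disjoint union of copies of $A_1$; taking $k$ (resp.\ $l$) to be the number of those copies supporting $s$ (resp.\ $t$) gives the second bullet. Finally the closing sentence drops out: only the first bullet allows $m_{st}\notin\{2,4,6,8\}$, and there $L(s)=L(t)$.

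I expect the one genuinely laborious step to be the classification pass for $m_{st}\ge 3$: verifying that no connected finite Coxeter diagram beyond $I_2(m)$, $A_3$, $A_4$, $D_4$, $F_4$ admits a group of diagram automorphisms with exactly two vertex-orbits and dihedral fixed subgroup, and then computing correctly, in each surviving case, the order of the product of the two orbit-longest-elements along with their two lengths. This is elementary but must be handled carefully; one can compress it by quoting the known table of foldings of connected Weyl groups (\cite[p.\ 227]{GecJac}) and noting that among connected non-crystallographic finite Coxeter groups only $I_2(m)$ has a nontrivial diagram automorphism, the flip, which merges its two vertices into a single orbit and so plays no role once $m_{st}\ge 3$.
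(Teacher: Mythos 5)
Your plan matches the paper's proof at the level of architecture: reduce to the rank-two parabolic $W'_J$ with $J=I(s)\cup I(t)$, establish that $W'_J$ is finite, then read the list off the classification of connected finite Coxeter diagrams with a two-orbit automorphism group. But you and the paper put the work in opposite places, and you have glossed over the step the paper treats as the actual mathematical content.

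The paper dismisses the classification with a single sentence (``It is easy to classify the finite Coxeter systems\ldots having only two orbits''), whereas you carry it out in full, including the observations that folding a connected diagram yields an irreducible system (so $m_{st}\ge 3$ forces a single $G$-orbit of components) and that $m_{st}=2$ corresponds to exactly two orbits of $A_1$-components. That is a useful amount of extra rigour. Conversely, the one line you pass off as following from ``the structure theory of quasi-split embeddings'' --- that $W'_J$ is finite whenever $m_{st}$ is finite --- is precisely where the paper spends nearly its entire proof, and it does not follow formally from H\'ee's theorem (which tells you $(W'_J)^G$ is a Coxeter group, not that finiteness propagates upward). The paper gives two arguments. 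For standard embeddings: the image of the Coxeter element $st$ is a Coxeter element of $W'_J$ of finite order, and Howlett's theorem says an infinite Coxeter group has Coxeter elements of infinite order. In general: the longest element of $W_{s,t}$ is sent to a $G$-invariant $w\in W'_J$, and one checks that every simple reflection of $W'_J$ must lie in its descent set (if some $u\in I(s)$ were not, then no element of $I(s)$ would be, forcing $w$ to be a minimal $W_{I(s)}$-coset representative, so $sw>w$ in $W$, contradicting maximality); an element with full descent set exists only in a finite Coxeter group. You should include at least one of these arguments rather than quoting \cite{Hee,GecIan} as a black box; as stated your proposal asserts the implication without a proof or a pinpoint citation for exactly that implication.

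With that step supplied, your proof is correct, and the extra detail you give on the classification (including the verification that no connected diagram beyond $I_2(m)$, $A_3$, $A_4$, $D_4$, $F_4$ works, and the orbit-longest-element length computations producing the pairs $(1,1),(1,2),(2,3),(1,3),(2,4)$) is a genuine improvement over the paper's terse treatment.
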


\begin{proof} Without loss of generality, $W$ is dihedral, and $S' = I(s) \cup I(t)$. It is easy to classify the finite Coxeter systems $(W', S')$ with a group action having only two orbits, and they correspond to those listed above. It remains to show that $W'$ must be finite in order that $m_{st} < \infty$.

When the embedding is standard, the Coxeter element $h = st$ of $W$ is sent under the embedding to a
Coxeter element $h'$ of $W'$. Since the order of $h$ is finite, so is the order of $h'$. Howlett \cite{Howlett} has shown that the order of a Coxeter element
of an infinite group is infinite; therefore, $W'$ must be finite.

In general, one can make the following argument, explained to me by Meinolf Geck. The longest element of $W$ goes to some element $w \in (W')^G$. We claim that $w$ has every simple
reflection in its (left) descent set, which forces it to be a longest element of $W'$. If some $u \in I(s)$ is not in the left descent set of $w$, then neither is any element in the
entire orbit $I(s)$. Standard observations about cosets of parabolic subgroups indicate that $w$ must be a minimal coset representative for the parabolic subgroup generated by $I(s)$,
and thus $sw > w$. But then $sw$ is a longer element in $W$, a contradiction. See \cite[Lemma 2]{GecIan} for a similar proof. \end{proof}

\subsection{Hecke algebras with unequal parameters}
\label{subsec-Heckeeq}

The \emph{Hecke algebra (with unequal parameters)} $\HC = \HC(W,S,L)$ is a $\Zvv$-algebra with \emph{standard generators} $H_s$ for $s \in S$, satisfying relations
\begin{subequations} \label{eqs:hecke} \begin{equation} \label{eq:quadratic} (H_s + v^{L(s)})(H_s - v^{-L(s)}) = 0, \end{equation} \begin{equation} \label{eq:braid} H_s H_t \cdots = H_t
H_s \cdots \end{equation} \end{subequations} where both terms in \eqref{eq:braid} are alternating of length $m_{s,t}$. Relation \eqref{eq:braid} is omitted when $m_{s,t}=\infty$. We assume that $L$ is a positive weight function henceforth.

An \emph{expression} $\un{x} = s_1 s_2 \ldots s_d$ is a sequence of elements $s_i \in S$, and is denoted with an underlined letter. Removing the underline, one obtains the product $x \in
W$. An expression as above is reduced if $\ell(x) = d$. We write $H_{\un{x}} \define H_{s_1} \cdots H_{s_d}$.  When $\un{x}$ is reduced, let $H_x = H_{\un{x}}$, a product which does not
depend on the chosen reduced expression. By convention $H_1 = 1$. The collection $\{H_x\}_{x \in W}$ is the \emph{standard basis} of $\HC$ over the ring $\Zvv$.

Given a polynomial $P \in \Zvv$ we write $P^k$ for the coefficient of $v^k$ in $P$. We write $\HC_{\le n}$ for those linear combinations $\sum_{w \in W} P_w H_w$ where $P_w^k=0$ for
$k>n$. We define $\HC_{\ge n}$, $\HC_{< n}$ and $\HC_{> n}$ in the same way.

The \emph{bar involution} is the $\ZM$-linear map $\ov{(\cdot)} \co \HC \to \HC$ satisfying $\ov{v}=v^{-1}$ and $\ov{H_x}=H_{x^{-1}}^{-1}$. It is an algebra involution. An element $b \in
\HC$ satisfying $\ov{b}=b$ is called \emph{self-dual}. For each $x \in W$ there is a unique element $b_x \in \HC_{\ge 0}$ which is self-dual, and for which $b_x = H_x$ modulo $\HC_{>0}$.
Together, these elements $\{b_x\}_{x \in W}$ are called the \emph{Kazhdan-Lusztig basis} or \emph{KL basis}. The \emph{KL polynomials} $P_{w,x}$ are defined by $b_x = \sum_w P_{w,x} H_w$.
One can show that $P_{w,x}=0$ unless $w \le x$, and that $P_{x,x}=1$.

Our notation for the Hecke algebra differs from Lusztig's. To compare this definition with the definition in \cite{LuszUnequal03}, one must switch $v$ and $v^{-1}$. Then our $H_x$ agrees with
Lusztig's $T_x$, and our $b_x$ agrees with his $c_x$.

In the split case $(W,S,\ell)$, there is a \emph{standard pairing} on $\HC$ with a variety of nice properties. This is a map $\HC \times \HC \to \Zvv$ satisfying
\begin{subequations}
\begin{equation} (b_w x,y) = (x,b_{w^{-1}} y) \end{equation}
\begin{equation} (x b_w,y) = (x,y b_{w^{-1}}) \end{equation}
\begin{equation} \label{gradedorthonormal} (b_w, b_v) = \delta_{wv} + v\ZM[v] \end{equation}
\end{subequations}
for all $v,w \in W$ and all $x,y \in \HC$. The property \eqref{gradedorthonormal} states that the KL basis is \emph{graded orthonormal}.

\subsection{Another presentation}
\label{subsec-KLpresentation}

One special KL basis element is $b_s = H_s + v^{L(s)}$, which we call a \emph{KL generator}. Just as the standard generators generate the Hecke algebra over $\Zvv$, so too do the KL
generators. The presentation for $\HC$ in terms of the KL generators is less well-known; it does not appear to be in the literature. 

It is easy to observe that the quadratic relation on standard generators \eqref{eq:quadratic} is equivalent to the following quadratic relation. \begin{equation} \label{eq:bb} b_s b_s
= (v^{L(s)} + v^{-L(s)}) b_s.\end{equation}

Given a sequence $\un{x}$, we write $b_{\un{x}}$ for the product $b_{s_1} b_{s_2} \cdots b_{s_d}$. Note that typically $b_{\un{x}} \ne b_x$, even for reduced expressions. However,
$b_{\un{x}}$ is self-dual, and one can deduce that for a reduced expression $\un{x}$, \begin{equation} b_{\un{x}} = b_x + \sum_{y < x} Q_{y,\un{x}} b_y \end{equation} for some
polynomials $Q_{y, \un{x}} \in \ZM[(v+v^{-1})]$. For the moment, we will call these terms $Q_{y,\un{x}} b_y$ by the name ``lower terms."

Pick $s,t \in S$ with $m_{st} < \infty$. Let $w_0$ denote the longest element of the dihedral parabolic subgroup $W_{s,t}$ generated by $s$ and $t$. It has two reduced expressions, and
therefore one can find two separate equations for $b_{w_0}$, one as $b_s b_t b_s \ldots$ plus certain lower terms, and one as $b_t b_s b_t \ldots$ plus certain other lower terms. The
equality of these two expressions is an algebraic relation on the generators $b_s$ and $b_t$, and this relation will be equivalent to the braid relation on standard generators
\eqref{eq:braid}.

Therefore, the Hecke algebra has a presentation with KL generators, the relation \eqref{eq:bb}, and some dihedral relation for each pair $s,t$ with $m_{st}<\infty$, given by equating two
expressions for $b_{w_0}$. It remains to find these two expressions for $b_{w_0}$, which we do here for the quasi-split case (i.e. using the restrictions on $L$ coming from Proposition
\ref{dihedral possibilities}).

\begin{prop} \label{prop: quasi-split} Suppose that $(W,S,L)$ is quasi-split, and let $s,t \in S$ with $m_{st} < \infty$. We give two descriptions of $b_{w_0}$, where $w_0$ is the
longest element of $W_{s,t}$. Equivalently, given either reduced expression $\un{x}$ for $w_0 \in W_{s,t}$, we decompose the product $b_{\un{x}}$ into the KL basis.
\begin{itemize}

\item Suppose that $L(s)=L(t)$, and $m_{st}$ is arbitrary. For $1 \le k \le m$ let $\un{k}_s$ denote the reduced expression which alternates between $s$ and $t$, has length $k$, and
ends in $s$. Then \begin{equation} \label{eq:dihedral equal} b_{w_0} = b_{\un{m}_s} + \sum_{k < m} \rho_{k,m} b_{\un{k}_s}. \end{equation} In this equation, the coefficients
$\rho_{k,m}$ are all integers, and the same integers give a relation $$V_{m+1} \cong V^{\ot (m+1)} + \sum_{k < m} \rho_{k,m} V^{\ot (k+1)}$$ in the Grothendieck group of $\mathfrak{sl}_2$
representations. The same equation holds with $s$ and $t$ switched. Equivalently, one has \begin{equation} \label{eq:dihedral equal alt} b_{\un{m}_s} = b_{w_0} + \sum_{k < m}
\kappa_{k,m} b_{k_s}. \end{equation} The positive integers $\kappa_{k,m}$ satisfy $$V^{\ot (m+1)} \cong V_{m+1} \oplus \bigoplus_{k < m} V_{k+1}^{\oplus \kappa_{k,m}}.$$

\item Suppose that $m_{st}=2$, $L(s)=k$ and $L(t)=l$. Then \begin{equation} \label{eq:dihedral m=2} b_{w_0} = b_s b_t = b_t b_s. \end{equation}

\item Suppose that $m_{st}=4$, $L(s)=k$ and $L(t)=2k$. Then \begin{equation} \label{eq:dihedral m=4} b_{w_0} = b_s b_t b_s b_t - (v^k + v^{-k}) b_s b_t = b_t b_s b_t b_s - (v^k + v^{-k}) b_t b_s. \end{equation} Equivalently, one has \begin{equation} \label{eq:dihedral m=4 alt} b_s b_t b_s b_t = b_{stst} + (v^k + v^{-k}) b_{st}. \end{equation} In fact, the same computation holds for $L(s)=3k$ and $L(t)=2k$.

\item Suppose that $m_{st}=6$, $L(s)=k$ and $L(t)=3k$. Then \begin{eqnarray} \label{eq:dihedral m=6} b_{w_0} & = & b_s b_t b_s b_t b_s b_t - (2v^{2k} + 2v^{-2k}) b_s b_t b_s b_t + (v^{4k} + 1 + v^{-4k}) b_s b_t \\ \nonumber & = & b_t b_s b_t b_s b_t b_s - (2v^{2k} + 2v^{-2k}) b_t b_s b_t b_s + (v^{4k}+1+v^{-4k}) b_t b_s.\end{eqnarray} Equivalently, one has \begin{equation} \label{eq:dihedral m=6 alt} b_s b_t b_s b_t b_s b_t = b_{ststst} + (2v^{2k}+2v^{-2k}) b_{stst} + (v^{-4k} + 3 + v^{4k}) b_{st}. \end{equation}

\item The case when $m_{st}=8$ is not difficult, but we set a precedent here and ignore it. \end{itemize} \end{prop}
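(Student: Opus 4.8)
The plan is to reduce the statement to a short list of explicit computations inside small dihedral Hecke algebras, and then to carry those out. \emph{Reductions.} In each bullet the two displayed formulas for $b_{w_0}$ are automatically equal, since both compute the unique KL basis element $b_{w_0}$; moreover the second follows from the first by applying the standard $\Zvv$-linear algebra anti-automorphism $\tau$ of $\HC$ with $\tau(H_x)=H_{x^{-1}}$. Indeed $\tau(b_s)=b_s$ for each $s$, $\tau$ reverses products and commutes with the bar involution, and $\tau(b_{w_0})=b_{w_0^{-1}}=b_{w_0}$, so applying $\tau$ to ``$b_{w_0}=b_sb_tb_s\cdots-(\cdots)$'' gives ``$b_{w_0}=\cdots b_sb_tb_s-(\cdots)$''. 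Hence it suffices to establish one of the two descriptions in each case. Next, every element occurring lies in the parabolic subalgebra $\HC(W_{s,t})$ with the restricted weight, and since the bar involution preserves this subalgebra, a self-dual element of it lying in $\HC_{\ge 0}$ and congruent to $H_x$ modulo $\HC_{>0}$ is the KL basis element $b_x$ of $\HC$; so we may assume $(W,S)=(W_{s,t},\{s,t\})$ is dihedral, with $(L(s),L(t))$ one of the pairs of Proposition~\ref{dihedral possibilities}. Finally, the substitution $v\mapsto v^k$ is a ring homomorphism $\HC(W,L_0)\to\HC(W,kL_0)$ fixing each $H_x$ and commuting with the bar involution, hence (by uniqueness) carrying $b_s\mapsto b_s$ and each KL basis element to a KL basis element; applying it to the $k=1$ identities produces the ones for general $k$, the lower-order coefficients of \eqref{eq:dihedral m=4}--\eqref{eq:dihedral m=6 alt} transforming by $v\mapsto v^k$ as displayed. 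We are thus reduced to the weights $(1,1)$ with $m_{st}$ arbitrary, $(1,2)$ and $(3,2)$ with $m_{st}=4$, $(1,3)$ with $m_{st}=6$, and $(4,2)$ with $m_{st}=8$.

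\emph{Equal parameters.} For $L(s)=L(t)=1$ and $m=m_{st}$ arbitrary, I would first prove the recursion $b_s\cdot b_w=b_{sw}+b_{w'}$ whenever $sw>w$ and $2\le\ell(w)\le m-1$, where $b_w$ is a KL basis element and $w'$ denotes $w$ with its first letter deleted, together with $b_sb_t=b_{st}$ and $b_s\cdot b_{w_0}=(v+v^{-1})b_{w_0}$ (and symmetrically with $s$ and $t$ interchanged). This is an elementary induction from \eqref{eq:bb} and the standard-basis quadratic relations; equivalently it is the one-step multiplication rule for the KL basis, whose coefficients in a dihedral group with equal parameters are all $0$ or $1$. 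The recursion is formally the Clebsch--Gordan rule $V\otimes V_j\cong V_{j+1}\oplus V_{j-1}$ for $\mathfrak{sl}_2$, with the degenerations $V\otimes V_1\cong V_2$ at one end and the truncation at $w_0$ at the other; iterating it along a reduced expression for $w_0$ writes each product $b_{\underline{k}_s}$ in the KL basis with exactly the multiplicities of the irreducibles in a tensor power of the standard $\mathfrak{sl}_2$-module, and inverting this unitriangular integral system gives \eqref{eq:dihedral equal} and \eqref{eq:dihedral equal alt} with the asserted integers $\rho_{k,m}$ and $\kappa_{k,m}$. Here the ``$s$ and $t$ switched'' statement is genuine, via the diagram automorphism of $W_{s,t}$, precisely because $L(s)=L(t)$.

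\emph{The remaining (finite) cases.} For $m_{st}=2$ the generators commute and $b_sb_t=(H_s+v^{L(s)})(H_t+v^{L(t)})$ is self-dual (the bar involution being an algebra homomorphism fixing $b_s$ and $b_t$), lies in $\HC_{\ge 0}$, and is congruent to $H_{st}$ modulo $\HC_{>0}$, so $b_sb_t=b_{st}=b_{w_0}$; this is \eqref{eq:dihedral m=2}. For $m_{st}=4$ with $(L(s),L(t))=(1,2)$: expand $b_sb_tb_sb_t$ and $b_sb_t$ in the standard basis by repeated use of the quadratic relations $H_s^2=(v^{-1}-v)H_s+1$ and $H_t^2=(v^{-2}-v^2)H_t+1$, form $b_sb_tb_sb_t-(v+v^{-1})b_sb_t$, and observe that this element is self-dual (a $\Zvv$-combination of self-dual elements with bar-invariant coefficients), lies in $\HC_{\ge 0}$, and is congruent to $H_{w_0}$ modulo $\HC_{>0}$; by the uniqueness characterization of the KL basis it equals $b_{w_0}$, which is \eqref{eq:dihedral m=4} (equivalently \eqref{eq:dihedral m=4 alt}), and then the mirror formula follows by $\tau$. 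The weight $(3,2)$ is the same computation with $H_s^2=(v^{-3}-v^3)H_s+1$: again $b_sb_tb_sb_t-(v+v^{-1})b_sb_t$ lands in $\HC_{\ge 0}$ and is congruent to $H_{w_0}$. For $m_{st}=6$ with $(L(s),L(t))=(1,3)$ one runs the identical argument in the twelve-element dihedral Hecke algebra: expand the length-six, length-four, and length-two products of $b_s,b_t$ in the standard basis, form $b_sb_tb_sb_tb_sb_t-(2v^2+2v^{-2})b_sb_tb_sb_t+(v^4+1+v^{-4})b_sb_t$, and check that it lies in $\HC_{\ge 0}$ and is congruent to $H_{w_0}$, concluding by uniqueness; this is \eqref{eq:dihedral m=6}.

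I expect the $m_{st}=6$ step to be the only real obstacle, and a purely computational one: it is structurally no different from $m_{st}=4$, but the twelve-element group, the asymmetric quadratic relations, and the longer Laurent-polynomial coefficients turn it into a lengthy and slip-prone calculation --- which is presumably also why the analogous sixteen-element case $m_{st}=8$ is omitted. Conceptually nothing is in doubt: the discussion preceding the proposition already guarantees that each product $b_{\underline{x}}$ equals $b_{w_0}$ plus lower terms, with coefficients in $\ZM[v+v^{-1}]$ and unitriangularly for the Bruhat order, so in every case it only remains to pin down the specific coefficients.
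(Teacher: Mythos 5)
Your proposal is sound and tracks the paper's (very terse) argument closely: reduce to the relevant dihedral parabolic, dispose of the equal-parameter case via the dihedral/$\mathfrak{sl}_2$ recursion (the paper points to \cite{ECathedral} here), and finish the remaining finite cases by an explicit dihedral computation; the $\tau$-antiautomorphism and the $v\mapsto v^k$ ring homomorphism are both correct and match what the paper leaves implicit. The one point of genuine divergence is the strategy for $m_{st}=4,6$: the paper's stated ``general method'' is to iterate a one-step recursion $b_s\cdot b_w=b_{sw}+(\text{explicit lower terms})$ of the kind $b_t b_{stststs}=b_{tstststs}+(v^{L(t)-L(s)}+v^{L(s)-L(t)})b_{tststs}+b_{tsts}$, whereas you expand $b_sb_tb_s\cdots$ fully in the standard basis, subtract the candidate lower terms, and invoke the uniqueness characterization of $b_{w_0}$ (self-dual, in $\HC_{\geq 0}$, leading coefficient $1$). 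Both are valid; the recursion has the practical advantage that each step keeps the coefficients small and manifestly in $\NM[v,v^{-1}]$, so one never has to re-verify membership in $\HC_{\geq 0}$ at the end, whereas in your approach that nonnegativity is precisely what the final computational check must confirm (and is not automatic after the subtraction). Since neither the paper nor your proposal actually carries out the $m_{st}=4,6$ arithmetic, this is a difference of computational style rather than a gap.
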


The proof of the case when $L(s)=L(t)=1$ is discussed at length in \cite{ECathedral}, and implies the case $L(s) = L(t) = k$. The remaining cases are just straightforward computations in
the dihedral group. which we leave as an exercise to the reader. For practice computing with dihedral groups in the unequal parameter case, see chapter 7 of \cite{LuszUnequal03}.

The general method is to find a recursion formula for multiplying a KL basis element with a KL generator. For equal parameters, this recursion is exemplified by the formula \[ b_s
b_{tststs} = b_{stststs} + b_{ststs}. \] The ``closed form'' of \eqref{eq:dihedral equal alt} comes from the observation that this recursion agrees with plethyism formulas for
representations of $\sl_2$, where $V_1 \ot V_k \cong V_{k+1} \oplus V_{k-1}$ for $k \ge 1$. This agreement is actually a shadow of the geometric Satake equivalence. See \cite{ECathedral}
for further explanation.

Meanwhile, the recursion formulas for arbitrary unequal parameters are not difficult to derive. For example, when $0 < L(s) < L(t)$ one has the exemplary formulas \[ b_s b_{tststs} =
b_{stststs} \] and \[ b_t b_{stststs} = b_{tstststs} + (v^{L(t)-L(s)} + v^{L(s) - L(t)}) b_{tststs} + b_{tsts}. \] A ``closed form" formula for $b_{w_0}$ for arbitrary dihedral groups
would require some elementary combinatorics. However, it would be far more interesting to find a category with similar plethyism rules.

\subsection{Invariant subalgebras of Hecke algebras}
\label{subsec-invariants}

In this chapter, we fix a $G$-split embedding $(W,S,L) \to (W',S',\ell)$. One mystery this paper seeks to unravel is the connection between $\HC(W,S,L)$ and
$\HC(W',S',\ell)$. For an element $x \in W$, we write $c_x$ for the KL basis element in $\HC(W,S,L)$. For an element $y \in W'$ (resp. $y \in W$) we write $b_y$ for the KL basis element
of $y$ (resp. of $\phi_I(y)$) in $\HC(W',S',\ell)$.

The group $G$ acts on $W'$ and on $\HC(W',S',\ell)$. Though one has $W = (W')^G \subset W'$, the size of the subalgebra $\HC(W',S',\ell)^G$ is much larger than the size of $W$. After
all, $\HC(W',S',\ell)^G$ has a basis parametrized by orbits of $G$ on $W'$, given by sums $\sum_y b_y$ over the elements $y$ of each orbit. Considering only the singleton orbits, one
obtains a sub-basis $\{b_x\}_{x \in (W')^G}$, though the span of this sub-basis is not closed under multiplication.

We now give examples which compare, for various $x \in W$, the multiplication of $b_x$ in $\HC(W',S',\ell)^G$ with the multiplication of $c_x$ in $\HC(W,S,L)$. These examples will
cover (almost) all the cases in Proposition \ref{prop: quasi-split}.

\begin{example} \label{compare A1A1} Let $W'$ have type $A_1 \times A_1$, with $G$ acting transitively on $S' = \{t,u\}$. Then $s=tu$ generates $(W')^G = W$. One has
\begin{equation} b_s b_s = b_t b_u b_t b_u = (v+v^{-1})^2 b_t b_u = (v^2 + {\color{red}2} + v^{-2}) b_s. \end{equation} In $\HC(W,S,L)$, however, \eqref{eq:bb} implies that \begin{equation} c_s
c_s = (v^2 + v^{-2}) c_s. \end{equation}

More generally, let $W'$ be any finite Coxeter group, with $A_1$ embedded by sending $s$ to the longest element of $W'$. Then one has $b_s^2 = [W'] b_s$ while $c_s^2 = (v^{L(s)} +
v^{-L(s)}) c_s$. Here, $[W']$ denotes the balanced Poincar\'e polynomial of $W'$, which has highest degree term $v^{L(s)}$ and lowest degree term $v^{-L(s)}$, but has various other
terms in between. \end{example}

\begin{example} \label{compare A1^k A1^l} Let $W'$ have type $A_1^{\times k} \times A_1^{\times l}$, with $G$ acting transitively on the first $k$ factors and the last $l$ factors.
Let $s$ denote the product of the first $k$ simple reflections, and $t$ denote the product of the last $l$. Then \begin{equation} b_s b_t = b_{st}, \end{equation} just as
\begin{equation} c_s c_t = c_{st}. \end{equation} \end{example}

\begin{example} \label{compare I2m^k} Let $W'$ have type $I_2(m)^{\times k}$, with $G$ acting transitively on these copies of $I_2(m)$. Let $s$ and $t$ be the generators
of $W$, and let $w_0$ denote the longest element of $W'$, also the longest element of $W$. Then \begin{equation} b_{\un{m}_s} = b_{w_0} + \sum_{k < m} \kappa_{k,m} b_{k_s},
\end{equation} just as \begin{equation} c_{\un{m}_s} = c_{w_0} + \sum_{k < m} \kappa_{k,m} c_{k_s}. \end{equation} \end{example}

\begin{example} \label{compare A3B2} Let $W'$ have type $A_3$, with $G = \ZM / 2 \ZM$ acting on $S' = \{x,y,z\}$ to switch $x$ and $z$. Then $s = y$ and $t = xz$ generate $(W')^G = W$. A
computation yields \begin{equation} b_s b_t b_s b_t = b_y b_x b_z b_y b_x b_z = b_{yxzyxz} + b_{yxyz} + b_{yzyx} + (v+v^{-1}) b_{yxz} = b_{stst} + {\color{blue} (b_{yxyz} + b_{yzyx})}
+ (v+v^{-1})b_{st}. \end{equation} In $\HC(W,S,L)$, however, \eqref{eq:dihedral m=4 alt} implies that \begin{equation} c_s c_t c_s c_t = c_{stst} + (v+v^{-1}) c_{st}. \end{equation}

The same calculation suffices when $W'$ has type $A_3^{\times k}$, and $G$ also permutes these $k$ factors. However, one must replace $v$ with $v^k$. This is a general principle.
\end{example}

\begin{example} \label{compareA4B2} Let $W'$ have type $A_4$, with $G = \ZM/2\ZM$ acting on $S' = \{s_1, s_2, s_3, s_4\}$ in the usual way. Then $t = s_1 s_4$ and $u = s_2 s_3 s_2$ generate $(W')^G = W$ of type $B_2$. Let $w_{123}$ and $w_{234}$ be the longest elements of the corresponding parabolic subgroups of type $A_3$. A computation yields
\begin{equation} b_t b_u b_t b_u = b_{tutu} + {\color{blue} (b_{s_2 s_1 w_{234}} + b_{s_3 s_4 w_{123}})} + {\color{blue} (v+v^{-1}) (b_{s_1 w_{234}} + b_{s_4 w_{123}})} + (v+v^{-1}) b_{tu}. \end{equation} In $\HC(W,S,L)$, however, \eqref{eq:dihedral m=4 alt} implies that \begin{equation} c_t c_u c_t c_u = c_{tutu} + (v+v^{-1}) c_{tu}. \end{equation} \end{example}

\begin{example} \label{compare D4G2} Let $W'$ have type $D_4$, where $S' = \{u_1,u_2,u_3,v\}$, and let $G$ act transitively on the vertices $u_i$. Let $s = v$ and $t= u_1 u_2 u_3$
denote the generators of $W$. Let $w_0 = ststst$ denote the longest element in $W'$ and $W$. A computation yields \begin{align} b_s b_t b_s b_t b_s b_t = b_{ststst} + {\color{blue}
(b_{u_1 u_2 stst} + b_{u_1 u_3 stst} + b_{u_2 u_3 stst})} + {\color{blue} (v+v^{-1}) (b_{u_1 stst} + b_{u_2 stst} + b_{u_3 stst})} \\ \nonumber + (2v^2 + {\color{red} 6} + 2v^{-2})
b_{stst} + {\color{blue} (v+v^{-1}) (b_{vu_1 u_2 st} + b_{vu_1 u_3 st} + b_{v u_2 u_3 st})}\\ \nonumber + (v^{4} + {\color{red} 6v^2} + (3+{\color{red} 9}) + {\color{red} 6v^{-2}} +
v^{-4}) b_{st}. \end{align} In $\HC(W,S,L)$, however, \eqref{eq:dihedral m=6 alt} implies that \begin{equation} c_s c_t c_s c_t c_s c_t = c_{ststst} + (2v^{2}+2v^{-2}) c_{stst} +
(v^{-4} + 3 + v^{4}) c_{st}. \end{equation} \end{example}

We continue our precedent and ignore the case where $W'$ has type $F_4$.

As evidenced by the examples above, mutliplication of the sub-basis $\{b_x\}$ for $x \in W \subset W'$ seems to behave approximately like the multiplication of $\{c_x\}$ in
$\HC(W,S,L)$, although with {\color{red}e\color{blue}x\color{red}t\color{blue}r\color{red}a\color{blue}n\color{red}e\color{blue}o\color{red}u\color{blue}s} terms. Some of these
extraneous terms have the form ${\color{blue} \sum_y b_y}$ over a non-trivial orbit of $G$ in $W'$, while others are multiples ${\color{red} n b_x}$ for $x \in W$. If only there were
some way to eliminate these extraneous terms...

\subsection{Some motivation}
\label{subsec-somemotivation}

Our goal in the remainder of this paper is to give a categorical explanation for the similarities and differences between these formulas. Let us briefly motivate this, as in the
introduction. One important result in the split case is called \emph{KL positivity}.

\begin{thm} When $L=\ell$ (i.e. in the split case), the following properties hold. \begin{itemize} \item $P_{w,x} \in \NM[v]$. \item $b_x b_y = \sum_z m^z_{x,y} b_z$, and $m^z_{x,y}
\in \NM[v,v^{-1}]$. \end{itemize} \end{thm}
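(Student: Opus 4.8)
The plan is to deduce both statements from the Soergel categorification theorem (recalled in Chapter~\ref{sec-sbim}) together with the Soergel conjecture of \cite{EWHodge}. Recall that $\SBim(W)$ is a graded, monoidal, Krull--Schmidt category whose split Grothendieck group is identified with $\HC(W)$ as a $\Zvv$-algebra, with the monoidal product corresponding to multiplication and the grading shift corresponding to multiplication by $v^{\pm 1}$; under this identification the Bott--Samelson bimodule attached to an expression $\un{x}$ has class $b_{\un{x}}$. The Soergel conjecture provides, for each $x \in W$, an indecomposable self-dual object $B_x$, unique up to isomorphism, with $[B_x] = b_x$.

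First I would address the positivity of $P_{w,x}$ using the standard filtration of Soergel bimodules. Every object of $\SBim(W)$, and in particular $B_x$, admits a finite filtration whose subquotients are grading shifts of the ``standard'' graded bimodules $R_w$ indexed by $w \in W$, and Soergel's construction of the Grothendieck group isomorphism (via his character map) identifies the graded multiplicity of $R_w$ in this filtration with the coefficient $P_{w,x}$ appearing in $b_x = \sum_w P_{w,x} H_w$. Since the graded multiplicity of a fixed subquotient in a finite filtration is the Poincar\'e series of an honest graded vector space, its coefficients are non-negative integers, whence $P_{w,x} \in \NM[v]$. (When $W$ is a Weyl group one could instead invoke the original Kazhdan--Lusztig interpretation of $P_{w,x}$ as the Poincar\'e polynomial of local intersection cohomology of a Schubert variety, but the Soergel-bimodule argument has the advantage of applying verbatim to an arbitrary Coxeter system.)

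Next I would address the structure constants, using that $\SBim(W)$ is monoidal and Krull--Schmidt. The bimodule $B_x \ot B_y$ decomposes, uniquely up to reordering, as a finite direct sum $\bigoplus_z \bigoplus_{j \in \ZM} B_z(j)^{\oplus n^z_{x,y,j}}$ with all $n^z_{x,y,j} \in \NM$, where $(j)$ denotes a grading shift. Passing to classes in the Grothendieck group and using $[B_z(j)] = v^{\pm j} b_z$ gives $b_x b_y = \sum_z m^z_{x,y}\, b_z$ with $m^z_{x,y} = \sum_j n^z_{x,y,j}\, v^{\pm j} \in \NM[v,v^{-1}]$, as required.

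The main obstacle is concentrated entirely in the Soergel conjecture itself, whose proof in \cite{EWHodge} requires the Hodge-theoretic machinery alluded to in the introduction; everything else (the identification of the Grothendieck group with $\HC(W)$, the Krull--Schmidt property, and the reading of $P_{w,x}$ off the standard filtration) is part of the foundational work of Soergel \cite{Soer07} and Elias--Williamson \cite{EWGr4sb} and needs no new argument. Granting the Soergel conjecture, both positivity statements follow formally from the categorification.
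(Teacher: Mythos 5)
Your proposal is correct and is essentially the argument the paper has in mind: the paper itself gives no detailed proof, but simply cites \cite{EWHodge} and summarizes the mechanism as ``each coefficient of these polynomials is in fact the dimension of a certain vector space,'' which is exactly what you spell out via the support/standard filtration (for $P_{w,x}$) and the Krull--Schmidt decomposition of $B_x \ot B_y$ (for $m^z_{x,y}$). One small point worth adding: your filtration argument by itself yields $P_{w,x} \in \NM[v,v^{-1}]$; to land in $\NM[v]$ you should also invoke the defining normalization of $b_x$ (namely $b_x \equiv H_x \pmod{\HC_{>0}}$, with $P_{x,x}=1$), which rules out negative powers of $v$.
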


This theorem was proven using categorification-theoretic methods by Kazhdan and Lusztig for Weyl groups, and by the author and Williamson for general Coxeter groups \cite{EWHodge}. One
proves that each coefficient of these polynomials is in fact the dimension of a certain vector space. On the other hand, KL positivity as stated will fail for Hecke algebras with unequal
parameters. Various KL polynomials $P_{w,x}$ and structure coefficients $m^z_{x,y}$ have negative coefficients.

In this paper, for a $G$-split embedding, we construct a category $\SBim_G$ from which one can extract vector spaces equipped with an action of $G$. Taking the dimension of these vector
spaces, one can obtain the (positive) coefficients for the subalgebra $\HC(W',S',\ell)^G$. The dimension is the trace of the identity element in $G$. If instead one takes the trace of a
different element of $G$ in special cases, one can obtain the coefficients for the algebra $\HC(W,S,L)$. Traces of other elements need not be positive integers. In this sense does one
cancel out the extraneous terms above: they correspond to sub-vector spaces with zero trace. In other words, the monoidal structure on $\SBim_G$ categorifies the formulas involving
$b_s$ above, while some trace cancellation reduces to the formulas involving $c_s$. The precise way in which $\SBim_G$ is said to categorify $\HC(W,S,L)$ involves weighted Grothendieck
groups, and will be elaborated in \S\ref{sec-eq-and-wt}.

\section{Equivariant categories and weighted Grothendieck groups}
\label{sec-eq-and-wt}

This section contains general background information on equivariant categories and the weighted Grothendieck groups associated to them. Definitions like these were made by Lusztig in
his description of folding for quantum groups \cite{LuszQuantum}. The results of this chapter are entirely folklore, though we were unable to find an exposition with the same level of generality.

\subsection{Definitions}
\label{subsec-defns-eq-and-wt}

\begin{defn} A \emph{strict action} of a group $G$ on a (graded) category $\CC$ is the following data. For each $g \in G$ one has an autoequivalence of $\CC$, also denoted $g$, such
that the identity $e \in G$ is assigned the identity functor. When $\CC$ is graded, these autoequivalences must commute naturally with the grading shift. For each $g,h \in G$, there is
an invertible natural transformation $a_{g,h} \co g \circ h \to gh$, satisfying an \emph{associativity} condition for each triple $g,h,k \in G$: \begin{equation} a_{gh,k} \circ
(a_{g,h} \ot 1) = a_{g,hk} \circ (1 \ot a_{h,k}) \co g \circ h \circ k \to ghk. \end{equation} \end{defn}

\begin{remark} Another definition of a strict action often found in the literature will allow the identity $e \in G$ to be assigned to a functor naturally isomorphic to the identity
functor. One fixes this natural isomorphism, and places additional conditions upon it. It is hardly less general to assert that $e$ is sent to the identity functor.
\end{remark}

We will often abuse notation and identify the functors $g \circ h$ and $gh$, leaving the natural transformation $a_{g,h}$ understood. The associativity condition allows us to identify the
functors $g \circ h \circ k$ and $ghk$ canonically.

\begin{defn} \label{defn:wtdgrth} Let $\DC$ be an additive (graded) category with a strict action of a group $K$, and let $\eta \in K^*$. The \emph{$\eta$-weighted Grothendieck group}
$[\DC]_\eta$ of $\DC$ is the free $\CM$-module generated by the symbols $[M]$ for objects $M \in \DC$, modulo the usual relation that $[M \oplus N] = [M]+[N]$, and the new relation
that $[kM] = \eta(k)[M]$ for $k \in K$. When $\DC$ is graded, this is also a $\CM[v^{\pm 1}]$-module via $v[M] = [M(1)]$. \end{defn}

\begin{remark} One can generalize this definition to define a $V$-weighted Grothendieck group, for any representation $V$ of $K$. A typical element is $[M] \ot v$ for $v \in V$, and the
new relation is $[kM] \ot v = [M] \ot k^{-1}v$. \end{remark}

We will apply this construction not to the action of $G$ on $\CC$, but to the action of $G^*$ on the equivariant category $\CC_G$.

\begin{defn} Suppose one has a strict action of $G$ on $\CC$. Given an object $B \in \CC$, an \emph{equivariant structure} on $B$ is a \emph{compatible} system of isomorphisms $\phi_g \co
B \xrightarrow{\sim} gB$ for each $g \in G$. Here, the system $\phi$ is compatible if $g(\phi_h) \circ \phi_g = \phi_{gh} \co B \to ghB$ for all $g,h \in G$.
\[
\begin{tikzcd}
B \arrow{r}{\phi_g} \arrow[swap]{dr}{\phi_{gh}} & gB \arrow{d}{g(\phi_h)} \\ & ghB
\end{tikzcd}
\]
A \emph{morphism} of
equivariant structures from $(B,\phi)$ to $(B',\phi')$ is a morphism $f \in \Hom_{\CC}(B,B')$ such that $\phi_g' \circ f = g(f) \circ \phi_g \co B \to g(B')$ for all $g \in G$.
\[
\begin{tikzcd}
B \arrow{r}{\phi_g} \arrow[swap]{d}{f} & gB \arrow{d}{g(f)} \\
B' \arrow{r}{\phi'_g} & gB'
\end{tikzcd}
\]
The collection of equivariant structures $(B,\phi)$, and the morphisms between them, form the \emph{equivariant category} $\CC_G$. \end{defn}

Let $\CC$ be an additive (graded) $\CM$-linear category. Then $\CC_G$ is also an additive (graded) $\CM$-linear category. Let $G^* = \Hom(G,\CM^*)$ denote the Pontrjagin dual of $G$,
which is an abelian group.

\begin{defn} We define a natural strict action of $G^*$ on $\CC_G$. For any character $\xi \in G^*$ and any compatible system of isomorphisms $\phi = (\phi_g)_{g \in G}$ for an object $B
\in \CC$, there is a new compatible system of isomorphisms $\xi \cdot \phi$ given by the rescaling $(\xi \cdot \phi)_g = \xi(g) \phi_g$. Define $\xi(B,\phi) = (B, \xi \cdot \phi)$. If $f
\in \Hom(B,B')$ gives rise to a map $(B,\phi) \to (B',\phi')$, then $f$ also gives rise to a map $\xi(B,\phi) \to \xi(B',\phi')$. Thus $\xi \co \CC_G \to \CC_G$ is a functor. Moreover,
for $\xi, \eta \in G^*$, the composition of functors $\xi \circ \eta$ is actually equal (not just isomorphic) to the functor $\xi \eta$. Thus we can let $a_{\xi,\eta}$ be the identity
natural transformation. \end{defn}

When we apply Definition \ref{defn:wtdgrth} for $K = G^*$ and $\DC = \CC_G$, the character $\eta$ is actually an element of the Pontrjagin double dual $G^{**}$, which is canonically
isomorphic to the abelianization $G/[G,G]$.

\begin{defn} Given a strict action of $G$ on $\CC$ and an element $g \in G$, the \emph{$g$-weighted Grothendieck group} $[\CC]_g$ of $\CC$ will be the $\overline{g}$-weighted
Grothendieck group of $\CC_G$, where $\overline{g}$ is the class of $g$ in $G/[G,G]$. \end{defn}

\begin{remark} One can generalize these constructions slightly, either by changing the base ring of the category, or by changing the base ring of the decategorification. If $\CC$ is
$\Bbbk$-linear for some complete commutative local ring $\Bbbk$, one has an action of the abelian group $\Hom(G,\Bbbk^*)$ on $\CC_G$. Similarly, if $K$ acts on $\DC$ and $\eta \in
K^*$, then one can define the $\eta$-weighted Grothendieck group to be linear over any subring of $\CM$ containing the image of $\eta$.

The most common such situation is when $G = \ZM/2\ZM$, as all characters are defined over $\ZM$. There is an action of $G^*$ on $\CC_G$ even when $\CC$ is only
$\ZM$-linear, and one can consider a $\ZM$-form of the weighted Grothendieck group. \end{remark}

Suppose that $\CC$ is a monoidal category, and that the action of $G$ is by monoidal functors. Then $\CC_G$ is also a monoidal category, with $(B,\phi) \ot (B',\phi') = (B \ot
B', \phi \ot \phi')$. Moreover, for $\eta, \xi \in G^{*}$ one has $\eta(B,\phi) \ot \xi(B',\phi') = (\eta \xi)(B \ot B',\phi \ot \phi')$, so that the action of $G^*$ on $\CC_G$
respects the monoidal structure in the appropriate way. Therefore, for any $g \in G$, the $g$-weighted Grothendieck group of $\CC$ will inherit the structure of a ring.

Similarly, suppose that $\CC$ is equipped with a contravariant duality functor $\DM$, satisfying $\DM^2 \cong \IM$, and that the action of $G$ commutes with $\DM$ up to isomorphism. Then
$\CC_G$ can also be equipped with a duality functor, via $\DM (B,\phi) = (\DM B, \DM \phi)$. The structure map is given by $(\DM \phi)_g = \DM(g \phi_{g^{-1}}) \co \DM B \to \DM gB$. The
action of $\DM$ is conjugate-linear with respect to the action of $G^*$, in the sense that $\DM(\xi (B,\phi)) = \xi^{-1} \DM(B,\phi)$. In particular, the action of $\DM$ descends to the
$g$-weighted Grothendieck group to be $\CM$-conjugate-linear. When $\CC$ is graded and $\DM(M(1)) \cong \DM(M)(-1)$, the action of $\DM$ on the Grothendieck group is also $v$-antilinear.

Let $\For \co \CC_G \to \CC$ denote the forgetful functor which only remembers the underlying object/morphism. It is faithful. $\For$ induces a map of Grothendieck groups $[\CC_G] \to
[\CC]$, but it need not descend to $[\CC]_g \to [\CC]$. However, $\For$ clearly induces a map $[\CC]_e \to [\CC]$ for the identity element $e \in G$ (or for any
element $g \in [G,G]$).

\subsection{Equivariant mixed categories and indecomposable equivariant objects}
\label{subsec-indecomposable}

For a graded category $\CC$, linear over $\CM$, we write $\Hom^k(X,Y) = \Hom^0(X,Y(k))$ to denote the space of maps of degree $k$, and we write $\Hom(X,Y) = \oplus_{k \in \ZM}
\Hom^k(X,Y)$ for the graded vector space of maps of all degrees. Assume that $\Hom^k(X,Y)$ is finite dimensional for all $k$ and all $X,Y$. The graded dimension $\gdim \Hom(X,Y)$ will be
a polynomial $P \in \NM[v,v^{-1}]$, satisfying $P^k = \dim \Hom^k(X,Y)$.

\begin{defn} \label{defn:mixed} Consider an additive graded Karoubian $\Bbbk$-linear category $\CC$ (for some base field $\Bbbk$), equipped with a duality functor $\DM$ satisfying
$\DM(M(1)) = \DM(M)(-1)$. Let $\XM$ index the set of indecomposable objects, up to isomorphism and grading shift. Then $\CC$ is called \emph{mixed} if for each $x \in \XM$ there is a
choice of representative (i.e. choice of grading shift) $M_x \in \CC$ for which: \begin{itemize} \item $\DM(M_x) = M_{x^*}$ for some $x^* \in \XM$; \item $\gdim \Hom(M_x,M_y) = \de_{xy}
+ v\NM[v]$. \end{itemize} In particular, no object is isomorphic to a grading shift of itself, and $\CC$ has the graded Krull-Schmidt property. We say the mixed category is
\emph{self-dual} if $x^* = x$ for all $x \in \XM$. \end{defn}

This definition (or a similar one) is found, for instance, in \cite{WebsCan15}, which also contains many examples and a great deal of discussion.

Suppose that a finite group $G$ acts on a self-dual mixed category $\CC$, commuting with the grading shift and the duality functor. Clearly $G$ will preserve the set of self-dual
indecomposable objects in $\CC$. Thus $G$ will act on the set $\XM$, by $g M_x \cong M_{gx}$. Let $$M_{Gx} \define \oplus_{y \in G \cdot x} M_y,$$ a direct sum of the indecomposables in
the orbit of $x$. By the Krull-Schmidt property, for any $Y \in \CC$ one has \begin{equation} Y \cong gY \text{ for all } g \in G \qquad \iff \qquad Y \cong \bigoplus M_{Gx}(n) \text{ for
various } x \in \XM, n \in \ZM. \label{invariantobject} \end{equation} Only an object $Y$ satisfying \eqref{invariantobject} can be equipped with a $G$-equivariant structure.

Let us consider the possible equivariant structures which can be placed on $M_{Gx}^{\oplus n}$.

\begin{prop} Fixing an element $x$ in the orbit $Gx$, and fixing an arbitrary compatible system on $M_{Gx}$, one has an equivalence of categories between compatible systems on
$M_{Gx}^{\oplus n}$ for various $n$, and representations of the stabilizer $G_x$. Thus the indecomposable objects in $\CC_G$ are classified (after fixing some data), up to isomorphism and
grading shift, by an orbit $Gx \subset \XM$ and an irreducible representation of the stabilizer $G_x$ of $x$. \end{prop}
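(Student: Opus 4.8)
The plan is to analyze compatible systems on $M_{Gx}^{\oplus n}$ directly, using the Krull-Schmidt property to pin down their structure, and then produce an explicit equivalence with $\Rep(G_x)$. First I would fix, once and for all, an isomorphism $\psi_g \co M_x \xrightarrow{\sim} g M_x$ for each coset representative of $G_x$ in $G$ (we may take $\psi_e = \id$), and from these build a reference compatible system on $M_{Gx}$ as stipulated; call it $\Phi$. Any object $Y$ carrying a $G$-equivariant structure satisfies \eqref{invariantobject}, and by discarding grading shifts that cannot appear (a compatible system forces all summands to be in a single orbit with the $G$-action permuting them transitively up to the grading) one reduces to $Y = M_{Gx}^{\oplus n}$. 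The key observation is that $\End_{\CC}(M_{Gx})$, by the mixed hypothesis and Krull-Schmidt, has degree-zero part a product of matrix algebras indexed by the orbit $Gx$, each a copy of $\Bbbk$, so $\End^0(M_{Gx}^{\oplus n}) \cong \prod_{y \in Gx} \mathrm{Mat}_n(\Bbbk)$.

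Next I would set up the bijection. Given a compatible system $\phi$ on $M_{Gx}^{\oplus n}$, form the "difference" cocycle $\theta_g \define \phi_g \circ (\Phi_g^{\oplus n})^{-1} \in \Aut(g M_{Gx}^{\oplus n})$; the compatibility condition \eqref{invariantobject}-style (the triangle in the definition of equivariant structure) translates into $\theta_{gh} = g(\theta_h)\,\theta_g$, i.e.\ $\theta$ is a (twisted) $1$-cocycle valued in $\Aut^0(M_{Gx}^{\oplus n})$, since only degree-zero automorphisms can occur (a nonzero degree part would violate $\gdim\Hom = \de + v\NM[v]$). Restricting attention to the summand $M_x^{\oplus n}$ and composing the cocycle around the stabilizer $G_x$, one checks $g \mapsto \theta_g|_{M_x^{\oplus n}}$ for $g \in G_x$ is an honest homomorphism $G_x \to \mathrm{GL}_n(\Bbbk) = \Aut^0(M_x^{\oplus n})$ — this uses that $G_x$ fixes $x$, so $g M_x \cong M_x$ canonically via $\psi_g$, and the cocycle relation collapses to multiplicativity. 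Conversely, a representation $\rho \co G_x \to \mathrm{GL}_n(\Bbbk)$ extends uniquely (given the chosen coset representatives) to a compatible system on $M_{Gx}^{\oplus n}$ by the standard induced-cocycle formula: on the summand over $gG_x$ one transports $\rho$ via $\Phi$ and $\psi$. These two constructions are mutually inverse up to the fixed data. On morphisms: a map $(M_{Gx}^{\oplus n},\phi) \to (M_{Gx}^{\oplus m},\phi')$ is a degree-zero element of $\Hom^0(M_{Gx}^{\oplus n}, M_{Gx}^{\oplus m}) \cong \prod_{y}\mathrm{Mat}_{m\times n}(\Bbbk)$ commuting with $\phi,\phi'$; unwinding, the component over the summand $M_x$ must intertwine $\rho$ and $\rho'$, and the other components are then determined, so $\Hom$-spaces match $\Hom_{G_x}(\rho,\rho')$. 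This gives the claimed equivalence of categories, and since indecomposables on one side (irreducible representations of $G_x$, by Maschke / Krull-Schmidt over $\Bbbk$) correspond to indecomposables on the other, the classification of indecomposable objects of $\CC_G$ up to isomorphism and grading shift by pairs (orbit $Gx$, irrep of $G_x$) follows.

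I expect the main obstacle to be the bookkeeping in verifying that the cocycle genuinely takes values in \emph{degree-zero} automorphisms and that the induced-cocycle construction is well defined independent of coset representatives up to the allowed "fixing of data" — in other words, making precise the parenthetical "after fixing some data" so that the equivalence is canonical once that data is chosen. The degree-zero claim is where the mixed hypothesis $\gdim\Hom(M_x,M_y) = \de_{xy} + v\NM[v]$ does real work: it guarantees $\Hom^0$ between indecomposables in the same orbit is at most one-dimensional and that no negative-degree maps exist, so every automorphism appearing in a compatible system is forced into the semisimple degree-zero part, which is what lets $G_x$-representation theory take over. A secondary subtlety is checking that the grading shifts are handled correctly — a priori $g M_x$ could be $M_{gx}(k)$ for some shift $k$ — but a compatible system around a loop in $G$ forces the total shift to be zero, and one should remark that the relevant action of $G$ on $\XM$ is shift-free because $G$ preserves the distinguished representatives $M_x$; this is exactly the setup already arranged before the proposition.
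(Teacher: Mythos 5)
Your proposal follows essentially the same route as the paper's (sketch of a) proof: fix a reference $G_x$-equivariant structure on $M_x$ (the paper phrases it exactly this way), compare the given $\phi$ against it to extract a representation of $G_x$ on the multiplicity space, and check that morphisms of equivariant objects correspond to $G_x$-intertwiners. The paper's proof is terser — it defers the cocycle bookkeeping to the worked $\ZM/4\ZM$ examples immediately preceding the proposition — but the underlying mechanism (difference cocycle, collapse to honest multiplicativity on the stabilizer, mixed hypothesis forcing $\End^0$ to be matrix algebras over $\Bbbk$) is the same; you have just spelled out the steps the paper leaves as ``an exercise.''
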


First we look at some examples.

\begin{example} Suppose that $G \cong \ZM / 4\ZM$, generated by the element $\si$. For an object $M \in \CC$, a compatible system of isomorphisms $\phi$ on $M$ would be determined by an
isomorphism $\phi_\si \co M \to \si M$. The compatibility of the system amounts to the fact that \begin{equation} \label{mod4compatible} \si^3(\phi_\si) \circ \si^2(\phi_\si) \circ \si(\phi_\si)
\circ \phi_\si = \phi_{\si^4} = 1_M \in \End^0(M). \end{equation}

An orbit of $G$ on $\XM$ has size 1, 2, or 4. Consider first an orbit $G \cdot x$ of size 1, yielding an indecomposable object $M = M_x$ such that $M \cong \si M$. Fix an arbitrary
compatible system, determined by an isomorphism $\al \co M \to \si M$. This yields an identification of the one-dimensional spaces $\Hom^0(M,\si M)$ and $\End^0(M)$, sending $\phi \in
\Hom^0(M,\si M)$ to $\psi = \al^{-1} \phi \in \End^0(M)$. Moreover, any element $\psi \in \End^0(M)$ is a scalar multiple of $1_M$, so that $\si(\psi)$ is that same scalar multiple of
$1_{\si M}$. From this we deduce that $\al \psi = \si(\psi) \al$ and $\al \psi^{-1} = \si(\psi^{-1}) \al$. Using this, it is not hard to show that the relation \eqref{mod4compatible} on a map
$\phi_\si \in \Hom^0(M,\si M)$ is equivalent to the relation $$\psi^4 = 1_M.$$ Therefore, there are four equivariant structures on $M$, given by $\psi = \ze_4^k 1_M$ for $k = 0,1,2,3$. It
is easy to see that these four structures are non-isomorphic in $\CC_G$. The chosen compatible system $\al$ corresponds to $k=0$.

Now consider an orbit of size 4, yielding a sum $M = M_x \oplus M_{\si x} \oplus M_{\si^2 x} \oplus M_{\si^3 x}$ of four distinct indecomposables, such that $M \cong \si M$. Fix an arbitrary
compatible system, giving an identification of $\si(M_x)$ with $M_{\si x}$. A map $\phi_\si \co M \to \si M$ is determined by a quadruple of scalars $a,b,c,d \in \CM$, corresponding to the
maps $a 1_{M_x} \in \End^0(M_x) = \Hom^0(\si(M_x),M_{\si x})$, $b 1_{M_{\si x}} \in \End^0(M_{\si x})$, etcetera. The compatibility condition states that $abcd=1$. There is one equivariant
structure on $M$ for each such quadruple. However, which quadruple $a,b,c,d$ is chosen is irrelevant up to change of basis, or in other words, $(M,a,b,c,d) \cong (M,a',b',c',d')$ in
$\CC_G$, for any two quadruples. Therefore, there is only one isomorphism class of equivariant structure on $M$.

For an orbit of size 2, the argument is a mix of the two cases above. After choosing an isomorphism $\al \co M \to \si^2 M$, the map $\phi_\si$ is determined by a pair of scalars $a, b \in
\CM$; compatibility states that $(ab)^2 = 1$; changing basis can alter the individual values of $a$ and $b$, but will not change the product $ab$. Therefore, there are two isomorphism
classes of equivariant structure on $M$, given by $ab = \ze_2^k$ for $k = 0,1$. \end{example}

Note that these arguments rely heavily on the fact that $\End^0(M_x) \cong \CM$ for any indecomposable object $M_x$. Without this assumption, equivariant objects can be more complex. This
is one of the crucial features of mixed categories.

Now we sketch the proof of the proposition.

\begin{proof} Fix $x \in \XM$ and consider an equivariant object $(M_{Gx} \sqot V,\phi)$, for some multiplicity space $V$. By fixing an arbitrary $G_x$-equivariant structure on $M_x$,
it is easy to see that $\phi$ induces a representation of $G_x$ on $V$, in similar fashion to the examples above (where $\al$ determined the $G_x$-equivariant structure). It is an
exercise to see that there is a morphism between equivariant objects precisely when there is a morphism of $G_x$ representations, and that these morphisms compose appropriately.
\end{proof}

Note that this classification depended on an arbitrary choice of $x$ and a $G_x$-equivariant structure on $M_x$. In the example above for an orbit of size 1, the
indecomposable attached to the trivial representation of $G$ depended upon a choice of isomorphism $\al$. Once one chooses an equivariant structure to correspond to the trivial
representation, the equivariant structures attached to other representations are determined. After making this choice, we denote the indecomposable objects in $\CC_G$ by $(M_{Gx} \sqot
V)$, for a representation $V$ of $G_x$. We let $(M_{Gx} \sqot \CM)$ or $(M_{Gx},+1)$ denote the irreducible corresponding to the trivial representation.

Because $\For$ is faithful, it is easy to see that morphism spaces between various $(M_{Gx} \sqot V)$ are concentrated in non-negative degree, with all degree zero morphisms being
isomorphisms. Moreover, $\DM(M_{Gx} \sqot V) \cong (M_{Gx} \sqot V^*)$. Therefore $\CC_G$ is a mixed category (though not necessarily self-dual), and in particular it has the
Krull-Schmidt property.

In future sections, we will write $[M_{Gx},V]$ for the image of $(M_{Gx} \sqot V)$ in a Grothendieck group or weighted Grothendieck group.

\subsection{Nonabelian examples}
\label{subsec-nonabelian}

Let $G$ be an arbitrary group acting on a self-dual mixed category $\CC$, and let $g \in G$. The $g$-weighted Grothendieck group is (by the above definition) the quotient of $[\CC_G]$ by
the relation \begin{equation} \label{eq:gweighted} [M_{Gx},\xi \ot V] = \xi(g) [M_{Gx},V] \end{equation} for any $\xi \in G^*$ and $V \in \Rep(G_x)$. We provide some non-abelian examples.

\begin{example} Let $G$ be nonabelian, let $H = [G,G]$ be the commutator subgroup, and let $K$ be the abelianization $G/H$. Suppose that $\XM$ is identified with $K$ as a $G$-set. Then
$\XM/G$ has one orbit, with stablizer $H$, and any character of $G$ will fix any representation of $H$. There are two possibilities for the weighted Grothendieck group. If $g \in H$ then
any relation of the form \eqref{eq:gweighted} will be trivial, as $\xi(g) = 1$ and $\xi \ot V = V$ for any $\xi \in G^*$ and $V \in \Rep(H)$. Therefore, $[\CC]_g = [\CC_G] = [\Rep(H)]$.
On the other hand, if $g \notin H$ then there exists some character $\xi \in G^*$ such that $\xi(g) \ne 1$. Then the relation $[V] = \xi(g) [V]$ will force $[V]=0$ for all $V \in
\Rep(H)$. Therefore $[\CC]_g = 0$. \end{example}

\begin{example} Let $G = S_n$, acting by the standard action on a set $\XM$ of size $n$. Therefore, a given stabilizer is isomorphic to $S_{n-1}$. The only non-trivial character of
$S_n$ is the sign representation, which descends to the sign representation of any stabilizer. The irreducible representations $V_\la$ of $S_{n-1}$ are parametrized by partitions $\la$
of $n-1$, and the action of the sign representation switches $\la$ with its transpose partition $\la^t$. If $g$ is even then $[\CC]_g$ will have a basis given by partitions modulo
transpose, with $[V_\la] = [V_{\la^t}]$. If $g$ is odd, then $[\CC]_g$ has a basis given by non-self-transpose partitions modulo transpose, with $[V_\la] = - [V_{\la^t}]$. \end{example}

\subsection{Decomposition in equivariant mixed categories}
\label{subsec-decomposing}

Let $M$ be an arbitrary object in a self-dual mixed category $\CC$. Within $\CC$ there is a direct sum decomposition $$M \cong \bigoplus_{x \in \XM, k \in \ZM} M_x(k)^{\oplus d_{x,k}}$$
for some multiplicities $d_{x,k}$. When $M$ is self-dual one has $d_{x,k} = d_{x,-k}$. To compute the numbers $d_{x,k}$, one uses the \emph{local intersection pairing} or \emph{LIP},
which is the pairing given by composition \[\Hom^{-k}(M_x,M) \times \Hom^k(M,M_x) \to \End^0(M_x) = \CM. \] Then $d_{x,k}$ will be the rank of this pairing. We let $V^k(M,M_x)$ denote the
quotient of $\Hom^k(M,M_x)$ by the kernel of this pairing, a vector space of dimension $d_{x,k}$.

\begin{remark} It is not difficult to observe that the kernel of the local intersection pairing is contained in the Jacobson radical of $\CC$. In fact, the union of all such kernels will
generate the Jacobson radical, and modulo the Jacobson radical all morphisms will split. Any family of endomorphisms which become orthogonal idempotents modulo the Jacobson radical can
be lifted to a family of orthogonal idempotents, so that questions of decomposition can be answered modulo the Jacobson radical, where $V^k(M,M_x)$ is a genuine Hom space. We will not pursue this style of algebra in this paper.\end{remark}

\begin{remark} \label{rmk:pospolys} Suppose that $\CC$ is monoidal and that the monoidal identity $\IM$ is a self-dual indecomposable object. Its endomorphism ring is a
non-negatively-graded commutative ring $\End(\IM)=R$, with only scalars in degree $0$. This ring will act on the right and the left of any morphism space $\Hom(M,N)$, and all composition
maps are $R$-bimodule maps. The action of the positive part $R_+$ will send any morphism into the kernel of the LIP. The reason is that the image of (either the right or left action of)
$R_+$ in $\End(M_x)$ does not intersect $\End^0(M_x)$. \end{remark}

Now let $(M,\phi)$ be an arbitrary object in $\CC_G$. Decomposing $(M,\phi)$ in $\CC_G$ is more work than decomposing $M$ in $\CC$: a decomposition of representations, rather than a
dimension count. For any $x \in \XM$, let $\psi$ be the chosen equivariant structure map for $(M_{Gx},+1)$. The vector space $V^k(M,M_x)$ admits an action of the stabilizer $G_x$ as
follows: for $g \in G_x$ and $f \in V^k(M,M_x)$, let \begin{equation} \label{Gxaction} g \cdot f = g(\psi_{g^{-1}}) \circ g(f) \circ \phi_g \co M \to gM \to g(M_x) \to M_x. \end{equation}

\begin{claim} \label{claim:multiplicity} For a representation $V$ of $G_x$, the multiplicity of $(M_{Gx} \sqot V)$ in $(M,\phi)$ is equal to the multiplicity of $V$ in $V^k(M,M_x)$. We
call this multiplicity $d_{x,V,k}$. \end{claim}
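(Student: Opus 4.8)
The plan is to read off the multiplicities by treating the assignment $(M,\phi)\mapsto V^k((M,\phi),M_x)$, with the $G_x$-action of \eqref{Gxaction}, as an additive functor $\CC_G\to\Rep(G_x)$, and then matching the Krull--Schmidt decomposition of $(M,\phi)$ in $\CC_G$ against the decomposition of this $G_x$-module into irreducibles.

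First I would check that \eqref{Gxaction} genuinely defines such a functor: that the formula descends from $\Hom^k(M,M_x)$ to the quotient $V^k(M,M_x)$, that it gives an action of $G_x$, that it is functorial in $(M,\phi)$, and that it carries finite direct sums to direct sums. These amount to diagram chases with the compatibility conditions on $\psi$ and on $\phi$ and the strictness isomorphisms $a_{g,h}$; the one point I would isolate is that the local intersection pairing is invariant for the corresponding $G_x$-action on $\Hom^{-k}(M_x,M)$, so that its radical is a $G_x$-submodule and the action on $V^k(M,M_x)$ is well defined. Additivity is then immediate, since the local intersection pairing of a direct sum is block-diagonal and hence so is its radical.

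Next, since $\CC_G$ is mixed and therefore Krull--Schmidt, I would write $(M,\phi)\cong\bigoplus_{y,W,j}(M_{Gy}\sqot W)(j)^{\oplus d_{y,W,j}}$, summing over orbits $Gy\subset\XM$, irreducibles $W$ of $G_y$, and shifts $j\in\ZM$, and apply $V^k(-,M_x)$. It then remains to evaluate $V^k\big((M_{Gy}\sqot W)(j),M_x\big)$. From $\For\big((M_{Gy}\sqot W)(j)\big)\cong\bigoplus_{y'\in Gy}M_{y'}(j)^{\oplus\dim W}$ and the non-equivariant theory, the dimension of $V^k$ is the multiplicity of $M_x(k)$ among these summands; since in a mixed category $M_{y'}(j)\cong M_x(k)$ forces $y'=x$ and $j=k$, this is nonzero only when $y\in Gx$ and $j=k$, in which case it is $\dim W$. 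For these surviving summands $(M_{Gx}\sqot W)(k)$, the space $V^k\big((M_{Gx}\sqot W)(k),M_x\big)$ is precisely the multiplicity space carrying the defining representation $W$, and the $G_x$-action \eqref{Gxaction} on it is exactly the given action on $W$; this is the content of the construction underlying the classification of indecomposables in $\CC_G$. Hence $V^k((M,\phi),M_x)\cong\bigoplus_{W\in\Irr(G_x)}W^{\oplus d_{x,W,k}}$, where $d_{x,W,k}$ is the multiplicity of $(M_{Gx}\sqot W)(k)$ in $(M,\phi)$; comparing the multiplicity of an irreducible $V$ on the two sides shows that this $d_{x,V,k}$ equals the multiplicity of $V$ in $V^k(M,M_x)$, which is the claim, and the reducible case follows by linearity.

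The step I expect to be the main obstacle is this last identification. The classification proposition only sketches the equivariant structure it places on $M_{Gx}\sqot W$, so one must write that structure out --- including the autoequivalences and the coherence maps $a_{g,h}$ --- and verify that \eqref{Gxaction} reproduces the action on $W$ exactly, and not its dual or a twist; in particular, the inverse $\psi_{g^{-1}}$ appearing in \eqref{Gxaction} is precisely what is needed for the answer to come out as $W$. Everything else is the formal additive-functor bookkeeping described above.
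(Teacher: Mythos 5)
Your proof is correct, and its content matches what the paper intends, though your organization is cleaner than the paper's one-sentence sketch. The paper's indicated route is to build split equivariant projections $(M,\phi)\to(M_{Gx}\sqot V)(k)$ directly out of vectors in the $V$-isotypic part of $V^k(M,M_x)$ and then conclude by Krull--Schmidt in $\CC_G$; you instead turn $V^k(-,M_x)$ into an additive contravariant functor $\CC_G\to\Rep(G_x)$ and apply it to an already-chosen Krull--Schmidt decomposition of $(M,\phi)$, which replaces the idempotent-lifting step by a comparison of isotypic multiplicities on the two sides of an isomorphism of $G_x$-modules. These are two views of the same underlying computation (what $V^k$ does to $(M_{Gx}\sqot W)(k)$), but yours avoids constructing equivariant idempotents by hand, which is a genuine simplification. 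You also correctly isolate the crux: verifying that the action \eqref{Gxaction} on the surviving multiplicity space is $W$ itself and not its dual or a character twist. Using the compatibility $g(\psi_{g^{-1}})=\psi_g^{-1}$, one checks that for the twisted structure $\xi\cdot\psi$ on $M_{Gx}$ the action is by $\xi(g)$, exactly as in the paper's worked $\ZM/4\ZM$ example, so the identification comes out on the nose. Two small items are worth stating rather than deferring: first, the LIP is invariant for the diagonal $G_x$-action on $\Hom^{-k}(M_x,M)\times\Hom^k(M,M_x)$ (a one-line computation using $\End^0(M_x)=\CM$ and $\psi_g^{-1}\circ g(c\cdot 1)\circ\psi_g = c\cdot 1$), so the LIP radical is a $G_x$-submodule and the induced action on $V^k$ is well defined; second, precomposition by an equivariant morphism $(M,\phi)\to(N,\phi')$ intertwines the two $G_x$-actions precisely because of the equivariance square $\phi'_g\circ f=g(f)\circ\phi_g$, which is what makes $V^k(-,M_x)$ an honest functor.
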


\begin{proof} This is a fairly straightforward argument, which we leave to the reader. The key point is that a morphism in a $V$-component of $V^k(M,M_x)$ will help produce an equivariant
morphism from $(M,\phi)$ to $(M_{Gx} \sqot V)$. \end{proof}

Our reason for computing decompositions in $\CC_G$ will mainly be to compute the ring structure on $[\CC]_g$.

\subsection{The abelian case}
\label{subsec-abelian}

Abelian groups have several advantages which contribute to the ease in understanding of their weighted Grothendieck groups. These advantages are \begin{itemize} \item For any subgroup $H
\subset G$ (such as a stabilizer subgroup), the action of $G^*$ on $H^*$ is transitive. \item All representations are characters, so that the action of $H^*$ on the objects in
$\textrm{Rep}(H)$ is transitive. \item For any subgroup $H$, $G/H$ is abelian. Therefore, for any $g \notin H$, there is a character $\xi \in G^*$ such that $\xi(g) \ne 1$ but $\xi(H)=1$.
\end{itemize}

When we write $x \in \XM/G$, we mean that we choose a representative for an orbit. Because $G$ is abelian, the stabilizer $G_x$ is independent of the chosen orbit representative $x$. For
a character $\xi \in G^*$ we write $(M_{Gx},\xi)$ for the corresponding indecomposable object, viewing $\xi$ as a character of $G_x$. We write $[M,\phi]$ for the class of an object
$(M,\phi)$ in some weighted Grothendieck group of $\CC$.

\begin{prop} \label{prop:basis} Let $G$ be an abelian group and let $g \in G$. The $g$-weighted Grothendieck group $[\CC]_g$ has a basis $\{ [M_{Gx},+1] \}$ in bijection with the
orbits $Gx$ which $g$ fixes pointwise, that is, for which $g \in G_x$. \end{prop}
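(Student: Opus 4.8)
The plan is to work directly from the presentation of $[\CC]_g$ as a quotient of $[\CC_G]$ and then finish with a short piece of linear algebra. Since $\CC_G$ is a mixed category it has the graded Krull--Schmidt property, so $[\CC_G]$ is a free $\CM[v^{\pm1}]$-module on the classes $[M_{Gx},V]$, with $x$ running over $\XM/G$ and $V$ over $\Irr(G_x)$; because $G$ is abelian, $\Irr(G_x)=G_x^*$ and the stabilizer $G_x$ depends only on the orbit, so these classes are indexed by pairs $(Gx,\chi)$ with $\chi\in G_x^*$. By Definition~\ref{defn:wtdgrth} and the unpacking in \S\ref{subsec-nonabelian}, $[\CC]_g$ is the quotient of $[\CC_G]$ by the relations \eqref{eq:gweighted}, which here read $[M_{Gx},(\xi|_{G_x})\chi]=\xi(g)\,[M_{Gx},\chi]$ for all $\xi\in G^*$. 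The goal is to show that the images of the $[M_{Gx},+1]$ with $g\in G_x$ form a $\CM[v^{\pm1}]$-basis of this quotient.

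First I would check the spanning statement one orbit at a time. If $g\notin G_x$, then since $G/G_x$ is abelian there is a character $\xi\in G^*$ with $\xi|_{G_x}=+1$ and $\xi(g)\neq1$; the corresponding relation says $[M_{Gx},\chi]=\xi(g)[M_{Gx},\chi]$, so $(1-\xi(g))[M_{Gx},\chi]=0$, and since $1-\xi(g)$ is invertible in $\CM$ every class $[M_{Gx},\chi]$ becomes zero in $[\CC]_g$. If $g\in G_x$, then because restriction $G^*\to G_x^*$ is surjective each $\chi\in G_x^*$ equals $\xi|_{G_x}$ for some $\xi$, and the relation with trivial second argument gives $[M_{Gx},\chi]=\xi(g)[M_{Gx},+1]$ with $\xi(g)\in\CM^*$; hence every such class lies in the $\CM[v^{\pm1}]$-span of $[M_{Gx},+1]$. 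Thus the proposed classes span $[\CC]_g$.

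The real content is linear independence, which I would obtain by exhibiting a retraction. Let $F$ be the free $\CM[v^{\pm1}]$-module on symbols $e_{Gx}$, one for each orbit with $g\in G_x$, and define a $\CM[v^{\pm1}]$-linear map $\pi\co[\CC_G]\to F$ on the basis by $\pi([M_{Gx},\chi])=\chi(g)\,e_{Gx}$ when $g\in G_x$ (note $\chi(g)$ is meaningful precisely in that case) and $\pi([M_{Gx},\chi])=0$ when $g\notin G_x$. One verifies that $\pi$ annihilates the relations \eqref{eq:gweighted}: if $g\notin G_x$ both sides map to $0$, and if $g\in G_x$ then $(\xi|_{G_x})(g)=\xi(g)$, so $\pi([M_{Gx},(\xi|_{G_x})\chi])=\xi(g)\chi(g)\,e_{Gx}=\xi(g)\,\pi([M_{Gx},\chi])$. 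Hence $\pi$ descends to a map $[\CC]_g\to F$, and since $\pi([M_{Gx},+1])=e_{Gx}$ the composite of the surjection $F\to[\CC]_g$, $e_{Gx}\mapsto[M_{Gx},+1]$, with this descended map is the identity of $F$. Therefore $F\to[\CC]_g$ is injective, and combined with the spanning statement it is an isomorphism, which is exactly the claim.

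I expect the only nontrivial step to be this construction of $\pi$, i.e.\ the linear-independence half; the spanning half and the verification that $\pi$ kills the relations are immediate once the presentation is in hand, and the grading shift is harmless since everything is $\CM[v^{\pm1}]$-linear throughout. The inputs that do the work are the Krull--Schmidt property of $\CC_G$ (so that $[\CC_G]$ genuinely is free on the indecomposables classified above) together with the three elementary facts about abelian $G$ recorded at the start of this subsection: surjectivity of $G^*\to G_x^*$, the identification $\Irr(G_x)=G_x^*$, and, for orbits not fixed by $g$, the existence of a character of $G$ trivial on $G_x$ but nontrivial on $g$.
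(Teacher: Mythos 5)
Your proof is correct and follows essentially the same route as the paper's: decompose $[\CC_G]$ orbit by orbit, use the spanning relation $[M_{Gx},\chi]=\chi(g)[M_{Gx},+1]$, and use existence of a character $\xi\in G^*$ trivial on $G_x$ but nontrivial on $g$ to kill orbits with $g\notin G_x$. The one place you add value is in the linear-independence half: where the paper asserts ``no non-trivial relations exist on $[M_{Gx},+1]$'' for $g\in G_x$, you construct the explicit retraction $\pi$ that witnesses this, which is a worthwhile formalization of the same idea.
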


\begin{proof} Certainly the ordinary Grothendieck group $[\CC_G]$ has a basis given by \[ \{[M_{Gx}, \xi]\}_{x \in \XM/G, \xi \in G_x^*}, \] which splits as a vector space into subspaces
spanned by $[M_{Gx},\xi]$ for each fixed $x$. The relations imposed by the $g$-weighting preserve this decomposition. One has $[M_{Gx},\xi] = \xi(g) [M_{Gx},+1]$, so that certainly
$[M_{Gx},+1]$ spans its corresponding subspace, which is therefore at most one-dimensional. Additional relations can only be imposed by characters in $(G/G_x)^*$.

If $g \notin G_x$ then there exists some character $\xi \in G^*$ with $\xi(g) \ne 1$ and $\xi(G_x)=1$. Therefore $\xi$ preserves $(M_{Gx},+1)$, so $[M_{Gx},+1] = \xi(g) [M_{Gx},+1]$,
implying that $[M_{Gx},+1]=0$. On the other hand, if $g \in G_x$ then any character which preserves $(M_{Gx},+1)$ will also send $g$ to $1$, so that no non-trivial relations exist on
$[M_{Gx},+1]$. \end{proof}

\begin{cor} \label{cor:forgetful} The forgetful functor $\For \co \CC_G \to \CC$ induces an isomorphism $[\CC]_e \cong [\CC]^G$. \end{cor}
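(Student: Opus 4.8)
The plan is to deduce Corollary \ref{cor:forgetful} from Proposition \ref{prop:basis} by matching bases on the two sides. First I would recall the two structures involved. On the source side, by Proposition \ref{prop:basis} applied to $g = e$, the group $[\CC]_e$ has a basis $\{[M_{Gx},+1]\}$ indexed by orbits $Gx$ with $e \in G_x$; but $e$ lies in every stabilizer, so in fact $\{[M_{Gx},+1]\}$ ranges over \emph{all} orbits $x \in \XM/G$. On the target side, $[\CC]^G$ denotes the $G$-invariants of the ordinary Grothendieck group $[\CC]$; since $\CC$ is a self-dual mixed category (hence graded Krull--Schmidt, with $G$ permuting the indecomposables $M_x$ via $gM_x \cong M_{gx}$), $[\CC]$ has $\ZM[v^{\pm 1}]$-basis $\{[M_x]\}_{x \in \XM}$ and $G$ acts by permuting these basis elements along its action on $\XM$. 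Therefore $[\CC]^G$ is free with basis the orbit sums $\{[M_{Gx}] = \sum_{y \in Gx}[M_y]\}_{x \in \XM/G}$.

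Next I would pin down what $\For$ does on classes. By definition the underlying object of $(M_{Gx},+1)$ is $M_{Gx} = \bigoplus_{y \in Gx} M_y$, so $\For$ sends $[M_{Gx},+1]$ to the orbit sum $[M_{Gx}] \in [\CC]$. This orbit sum is manifestly $G$-invariant, so the composite $[\CC_G] \to [\CC]$ does land in $[\CC]^G$ on these classes; and since (as noted in the excerpt just before \S\ref{subsec-indecomposable}) $\For$ induces a well-defined map $[\CC]_e \to [\CC]$ — because any relation $[eM] = [M]$ is trivially respected — we get a $\CM[v^{\pm 1}]$-linear map $[\For]\co [\CC]_e \to [\CC]^G$. (One should note the base ring: over $\CM$ this is literally an isomorphism of $\CM[v^{\pm 1}]$-modules; if one wants a $\ZM$-form one needs $[\CC]^G$ to actually be spanned by orbit sums over $\ZM$, which it is by Krull--Schmidt, so the statement is fine as phrased.)

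Finally I would observe that $[\For]$ carries the basis $\{[M_{Gx},+1]\}_{x\in\XM/G}$ of $[\CC]_e$ bijectively onto the basis $\{[M_{Gx}]\}_{x\in\XM/G}$ of $[\CC]^G$, and hence is an isomorphism. If one wants to be careful about the ring structure in the monoidal case, one notes that $\For$ is a monoidal functor — it forgets the equivariant structure on a tensor product $(B,\phi)\ot(B',\phi') = (B\ot B',\phi\ot\phi')$ to $B \ot B'$ — so $[\For]$ is a ring homomorphism, and a bijective ring homomorphism is a ring isomorphism.

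The main obstacle, such as it is, is not conceptual but bookkeeping: one must be sure that the $G$-invariant part of $[\CC]$ is \emph{exactly} the span of orbit sums and contains nothing more, which is where self-duality and the Krull--Schmidt property of mixed categories (giving a genuine permutation basis $\{[M_x]\}$ on which $G$ acts by its $\XM$-action with no signs or grading shifts) are essential; and one must double-check that the indexing in Proposition \ref{prop:basis} really does become ``all orbits'' when $g = e$. Both are immediate once spelled out, so the proof is short.
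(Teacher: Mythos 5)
Your proof is correct and follows essentially the same route as the paper's: apply Proposition \ref{prop:basis} with $g=e$ (so every orbit contributes), note that $\For$ sends $[M_{Gx},+1]$ to the orbit sum $\sum_{y \in Gx}[M_y]$, and observe that these orbit sums form a basis of $[\CC]^G$ by the Krull--Schmidt property of a mixed category. The extra remarks you add on the base ring and on monoidality are sound but are not needed for the statement as the paper proves it.
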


\begin{proof} By the previous proposition, $[\CC]_e$ has a basis given by $\{ [M_{Gx},1] \}$ ranging over all orbits $x \in \XM/G$. The basis element $[M_{Gx},+1]$ is sent to $\sum_{y \in
Gx} [M_y]$, and together these images form a basis for $[\CC]^G$. \end{proof}

The following corollary will help to quickly compute the ring structure on $[\CC]_g$.

\begin{cor} \label{cor:trace} For any $(M,\phi) \in \CC_G$, one has the following equality in $[\CC]_{g}$: \begin{equation} \label{eq:trace} [M,\phi] = \sum_{k \in \ZM} \sum_{x \in \XM/G \textrm{ such that } gx = x} \Tr_{g} V^k(M,M_x) \cdot v^k [M_{Gx},+1]. \end{equation} \end{cor}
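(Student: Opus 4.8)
The plan is to combine the classification of indecomposable equivariant objects (the proposition preceding \S\ref{subsec-nonabelian}), the decomposition result of Claim \ref{claim:multiplicity}, and the basis description in Proposition \ref{prop:basis}. First I would start with the Krull-Schmidt decomposition of $(M,\phi)$ inside the mixed category $\CC_G$: write
\[
(M,\phi) \cong \bigoplus_{x \in \XM/G} \bigoplus_{k \in \ZM} \bigoplus_{V \in \Irr(G_x)} (M_{Gx} \sqot V)(k)^{\oplus d_{x,V,k}},
\]
so that in $[\CC_G]$ one has $[M,\phi] = \sum_{x,k,V} d_{x,V,k}\, v^k [M_{Gx},V]$, where by Claim \ref{claim:multiplicity} the integer $d_{x,V,k}$ is the multiplicity of the $G_x$-representation $V$ inside $V^k(M,M_x)$ (with the $G_x$-action given by \eqref{Gxaction}).

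Next I would push this identity through the quotient map $[\CC_G] \onto [\CC]_g$. By the $g$-weighting relation \eqref{eq:gweighted}, $[M_{Gx},V]$ is only nonzero in $[\CC]_g$ when $g \in G_x$ (by Proposition \ref{prop:basis}, or directly: if $g \notin G_x$, pick $\xi \in G^*$ trivial on $G_x$ with $\xi(g) \ne 1$, forcing $[M_{Gx},V] = \xi(g)[M_{Gx},V]$, hence $=0$). So the outer sum over $x$ collapses to those orbits with $gx = x$ in the strong sense that $g$ fixes the orbit pointwise. For such an $x$, I would write $V^k(M,M_x) \cong \bigoplus_V V^{\oplus d_{x,V,k}}$ as $G_x$-representations, so that
\[
\Tr_g V^k(M,M_x) = \sum_{V \in \Irr(G_x)} d_{x,V,k}\, \chi_V(g),
\]
where $\chi_V$ is the character of $V$. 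On the other hand, since any irreducible $V$ of $G_x$ is a $G^*$-translate of a one-dimensional object only when... — more carefully: in $[\CC]_g$ the relation \eqref{eq:gweighted} reads $[M_{Gx}, \xi \ot V] = \xi(g)[M_{Gx},V]$, and iterating over the $G^*$-orbit of $V$ one finds every $[M_{Gx},V]$ equals a scalar times $[M_{Gx},+1]$; the cleanest way to pin down that scalar is to observe that $\Tr_g(-)$ is additive and $G^*$-equivariant in the right way, giving $[M_{Gx},V] = \chi_V(g)\,[M_{Gx},+1]$ in $[\CC]_g$ when $g \in G_x$. Substituting and collecting the $v^k$ gives exactly \eqref{eq:trace}.

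The step I expect to be the main obstacle — or at least the one requiring the most care — is the identity $[M_{Gx},V] = \chi_V(g)\,[M_{Gx},+1]$ in $[\CC]_g$ for $g \in G_x$, for a possibly higher-dimensional irreducible $V$. The weighting relation \eqref{eq:gweighted} as literally stated only relates $[M_{Gx},\xi \ot V]$ to $[M_{Gx},V]$, i.e. it only handles one-dimensional twists; one needs to know that in $[\CC]_g$ the class $[M_{Gx},V]$ for arbitrary $V$ is the ``virtual trace'' $\chi_V(g)$ times $[M_{Gx},+1]$. The right way to see this is to note that $[\CC_G]$ restricted to a fixed orbit $x$ is the representation ring $R(G_x)$ (spanned by $[M_{Gx},V]$), that the $G^*$-action is by multiplication by one-dimensional characters, and that $[\CC]_g$ is the coinvariants of $R(G_x)$ under the subgroup $(G/G_x)^* \subset G^*$ with the class $[V]$ identified with $\xi(g)[V]$ along that subgroup — and then $\chi_V(g)$ is manifestly a well-defined class function computing the image. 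Alternatively, and perhaps more in the spirit of the paper, one decomposes $M_{Gx} \sqot V$ further: although $(M_{Gx}\sqot V)$ is indecomposable in $\CC_G$, its image in $[\CC_G]$ when $g$ is allowed to act can be analyzed by restricting the equivariant structure along $\langle g \rangle \subset G_x$ and using that $V|_{\langle g\rangle}$ splits as a sum of $\dim V$ characters whose values multiply to... — this reduces to the cyclic case and recovers $\chi_V(g) = \sum (\text{eigenvalues of } g \text{ on } V)$. Once this lemma is in hand, everything else is bookkeeping: assemble the decomposition, discard orbits not fixed by $g$, apply the lemma, and read off \eqref{eq:trace}.
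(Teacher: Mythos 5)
Your proposal is essentially the paper's proof, modulo one important point you seem to have missed: Corollary~\ref{cor:trace} lives in \S\ref{subsec-abelian}, where $G$ is assumed abelian. Hence every stabilizer $G_x$ is abelian, and \emph{every irreducible $V$ of $G_x$ is a one-dimensional character $\xi$}. The step you flag as the ``main obstacle'' --- establishing $[M_{Gx},V] = \chi_V(g)\,[M_{Gx},+1]$ for higher-dimensional irreducibles --- is therefore vacuous here. For a character $\xi \in G_x^*$, one extends $\xi$ to a character of $G$ (possible since $G$ is abelian; this is the first bullet of \S\ref{subsec-abelian}), and then \eqref{eq:gweighted} literally reads $[M_{Gx},\xi] = \xi(g)[M_{Gx},+1]$, which is exactly the identity you want with $\chi_V(g) = \xi(g)$. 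Once you note this, the rest of your argument --- Krull--Schmidt decomposition with multiplicities from Claim~\ref{claim:multiplicity}, killing orbits with $g \notin G_x$ via Proposition~\ref{prop:basis}, and summing $\sum_\xi d_\xi\,\xi(g) = \Tr_g V^k(M,M_x)$ --- is exactly what the paper does, and the proof closes immediately.

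Worse, the ``lemma'' you try to extract for general $V$ is actually \emph{false} outside the abelian case, so your fallback plan would not work in the generality you seem to be aiming for. Take the paper's own example from \S\ref{subsec-nonabelian}: $G=S_3$ acting on a size-$3$ orbit $\XM$, so $G_x \cong S_2$, and let $g$ be a $3$-cycle. Then $g$ fixes no element of $\XM$, so the right-hand side of \eqref{eq:trace} is identically zero; yet $[\CC]_g$ is one-dimensional, spanned by a nonzero class $[M_{Gx},+1]=[M_{Gx},\mathrm{sgn}]$. The weighting relation $[M_{Gx},\xi\ot V] = \xi(g)[M_{Gx},V]$ only relates representations in the same $G^*$-orbit, and for nonabelian $G$ this orbit structure on $\Irr(G_x)$ is far too coarse to force $[M_{Gx},V]$ to be $\chi_V(g)$ times $[M_{Gx},+1]$. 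So your instinct that something subtle is lurking in the higher-dimensional case is right, but the resolution is not a cleverer argument --- it is the hypothesis that $G$ is abelian, which you should state explicitly.
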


\begin{proof} We have already seen in Claim \ref{claim:multiplicity} how $(M,\phi)$ splits into indecomposables in $\CC_G$. Let $V$ be a representation of $G$ for which the character
$\xi$ appears with multiplicity $d_\xi$. Then $\sum d_\xi \xi(g) = \Tr_g V$. The corollary follows easily. \end{proof}

\subsection{Reductions in the abelian case}
\label{subsec-reductions}

Consider the situation where $G$ is an abelian group acting on $\CC$ trivially. The category $\CC_G$ is much larger than $\CC$, having an object $(M,\xi)$ for each object $M \in \CC$
and character $\xi \in G^*$. However, Proposition \ref{prop:basis} implies that, for any $g \in G$, the $g$-weighted Grothendieck group $[\CC]_g$ is isomorphic to the usual Grothendieck
group $[\CC]$ as a vector space; Corollary \ref{cor:trace} implies that they are isomorphic as rings when $\CC$ is monoidal. For purposes of the weighted Grothendieck group, we may as
well assume that $G$ is trivial. In this section, we consider several similar reductions.

Let $G$ be an abelian group acting on a self-dual equivariant mixed category $\CC$. Let $\KC \subset \CC$ be a monoidal Karoubian subcategory, closed under the action of $G$. Then $\KC_G
\subset \CC_G$ is fully faithful, and this inclusion induces an injection of weighted Grothendieck groups which preserves the ring structure.

Let $K \subset G$, and suppose that it acts trivially (on both morphisms and objects) on a subcategory $\KC$ as above. Let $G' = G/K$. The equivariant categories $\KC_G$ and $\KC_{G'}$
are certainly not equivalent. However, the results above imply that they have the same $g$-weighted Grothendieck groups, for an element $g \in G$ viewed as an element of either $G$ or
$G'$.

Combining these previous two observations allows us to simplify slightly when computing the ring structure on $[\CC]_g$. For instance, if we want to compute the product $[M_{Gx},+1]
[M_{Gy},+1]$, we may work within the smallest $G$-closed monoidal Karoubian subcategory $\KC$ which contains both $M_{Gx}$ and $M_{Gy}$. Then we may assume that $G$ acts faithfully on
$\KC$, because we may replace $G$ with the quotient by the kernel of the action.

\begin{remark} More general statements can be made, because the condition that $K$ acts trivially on objects is not a categorical one, and can be weakened. However, this reduction will
help us in our discussion of folding below. In this case, one has a mixed category $\DC$ attached to any Coxeter graph, with a monoidally-closed subcategory attached to any subgraph.
The group $G$ will act by automorphisms of a Coxeter graph, and for any union of orbits in the Coxeter graph there will be a subcategory like $\KC$ above. Any subgroup $K$ acting
trivially on this union of orbits will act trivially on the corresponding subcategory. Thus when computing in the weighted Grothendieck group, we can replace $G$ with its image in the
permutation group of these orbits.

We must clarify this statement slightly. This comment may not make sense until one reads the chapter on Soergel diagrammatics. If a subgroup $K$ acts trivially on a certain union of
orbits in a Coxeter graph, but non-trivially somewhere, then $K$ does not actually act trivially on morphism spaces in $\KC \subset \DC$. This is because $\IM \in \KC$ and $\End(\IM)$
is a graded polynomial ring $R$, whose positive part depends on the entire Coxeter graph and not just the union of orbits. However, as noted in Remark \ref{rmk:pospolys}, the image of
the action of $R_+$ on any Hom space is in the kernel of the LIP, and therefore will not contribute to any decomposition computations. \end{remark}

Here is another simplication we can use. Suppose that $G$ is abelian, and that some element $g \in G$ acts transitively on each orbit in $\XM$. Let $H \subset G$ be the cyclic subgroup
generated by $g$. We now compare the $g$-weighted Grothendieck group of $\CC$, when acted on by $G$ or by $H$. The orbit spaces $\XM/G$ and $\XM/H$ are the same, and only singleton orbits
will contribute to the $g$-weighted Grothendieck group in either case. The trace of $g$ on $V^k(M,M_x)$ also is unaffected by the choice $G$ versus $H$. Therefore, $[\CC]_g$ is
independent of whether $\CC$ is acted on by $G$ or by $H$. So if weighted by such an element, we may as well assume that $G$ is cyclic. However, once again, the actual equivariant
categories $\CC_G$ and $\CC_H$ are quite different.

\begin{remark} The combined assumptions that $G$ is abelian, $g \in G$ acts transitively on each orbit, and $G$ acts faithfully, are not sufficient to imply that $G = H$. One could
have $G = \ZM/3\ZM \times \ZM/3\ZM$ and $\si = (1,1)$, acting on a disjoint union of two copies of $\ZM/3\ZM$. \end{remark}

As seen in the non-abelian examples above, one can not make any of these reductions when $G$ is not abelian. For example, the standard action of $S_n$ has a cyclic subgroup which acts
transitively, but the weighted Grothendieck groups of $S_n$ and this cyclic subgroup are very different.

Suppose $G$ is non-abelian and $K \subset G$ acts on $\DC$ trivially. Consider a tensor product $M$ of indecomposables $(M_{Gx} \sqot V)$ in $\DC_G$. Suppose that $K$ acts trivially on
each representation $V$ of $G_x$. Then $K$ will act trivially on each space $V^k(M,M_{Gx})$, and on every representation appearing in the decomposition of $M$. Therefore, the full
subcategory $\DC_G^K \subset \DC_G$ whose indecomposable summands are $(M_{Gx} \sqot V)$, where $V$ is a representation of $G_x$ on which $K$ acts trivially, is actually a monoidal
subcategory. Thus, we may assume that $G$ acts faithfully for some computations, such as computing decompositions and the monoidal structure within $\DC_G^K$. Unfortunately, this will
not help when it comes time to compute the ring structure on the weighted Grothendieck group, as the weighted relation will not ``preserve" $\DC_G^K$.

\section{Soergel bimodules}
\label{sec-sbim}

In 1990, Soergel \cite{Soer90} defined a graded monoidal category of bimodules over a coinvariant ring of a Weyl group, known now as Soergel bimodules. This monoidal category has
Grothendieck ring isomorphic to the Hecke algebra. Since then, Soergel has modified his construction to yield bimodules over a polynomial ring associated to any Coxeter group, which
categorify the Hecke algebra $\HC(W,S,\ell)$ of that Coxeter group. See \cite{Soer07} for more details.

However, in \cite{EWGr4sb}, the authors have given another description of the same monoidal category, in a fashion which does not reference bimodules at all. This description is by
generators and relations, and uses the language of planar diagrammatics. We refer the reader to \cite{EWGr4sb} for additional background information and references. Definition
\ref{defn:DC} is almost copied verbatim from \cite{EWGr4sb}. For experts who do not need the review, it is important to note Claim \ref{claim distant}, and to recall some relevant features of the category outlined in \S\ref{subsec-catfnthms}.

In this chapter we fix a Coxeter group $(W,S)$. (This will be the Coxeter group $(W',S')$ into which another Coxeter group will be embedded, but there is no need for apostrophes at the
moment.)

\subsection{Realizations}
\label{subsec-cartan}

The input to Soergel's construction (as modified in \cite{EWGr4sb}) is a \emph{realization} of $(W,S)$, which is roughly a generalization of the reflection representation of $W$. A
realization is the data of a representation $\hg$ of $W$ over a base ring $\Bbbk$, together with a choice of simple roots $\al_s \in \hg^*$ and simple coroots $\al_s^\vee \in \hg$,
satisfying certain nondegeneracy conditions. One requires that the action of a simple reflection on $\hg$ and $\hg^*$ is given by ``reflection,'' so that for $v \in \hg^*$ one has $s(v)
= v - \al_s^\vee(v) \al_s$.

Let $R = \OC(\hg) = \Sym[\hg^*]$ denote the polynomial ring over $\Bbbk$ with linear terms equal to $\hg^*$. The ring $R$ inherits an action of $W$. We equip $R$ with a grading such
that linear terms have degree $2$.

\subsection{Diagrammatics for Soergel bimodules}
\label{subsec-diagrammatics}

\begin{defn} An \emph{$S$-graph} is a certain kind of colored planar graph with boundary, properly embedded in the strip $\RM \times [0,1]$. In other words, all vertices of the
graph lie on the interior of the strip, and edges may terminate either at a vertex or on the boundary of the strip. Additionally, an edge may form a closed loop. The number of
vertices and the number of components are required to be finite. The edges of this graph are unoriented, and colored by elements $s \in S$. We call the place where an edge meets the
boundary a \emph{boundary point}; boundary points are not vertices. The boundary points on $\RM \times \{0\}$ (resp. $\{1\}$) give a finite sequence of colored points, known as the
\emph{bottom boundary} (resp. \emph{top boundary}) of the graph.

The only vertices allowed in an $S$-graph are of 3 types (see Figure \ref{thegenerators}):

\begin{itemize} \item Univalent vertices (dots). These have degree $+1$. \item Trivalent vertices, where all three adjoining edges have the same color. These have degree $-1$. \item
$2m$-valent vertices, where the adjoining edges alternate in color between two elements $s \ne t \in S$, and $m_{st}=m<\infty$. These have degree $0$. \end{itemize}

We also allow decorations called \emph{polynomials} to float in the regions of the graph. These will be labeled by homogeneous polynomials $f \in R$. The \emph{degree} of an $S$-graph
is the sum of the degree of each vertex, and the degree of each polynomial.

We say that two Soergel graphs are \emph{isotopic} if there is an isotopy of Soergel graph embeddings between them, relative to the boundary. Such an isotopy can alter the precise
location of the boundary points on $\RM \times\{0,1\}$, but can not change the bottom or the top boundary sequence. \end{defn}

\begin{figure}
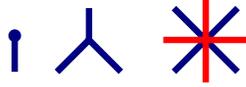
 \label{thegenerators} \caption{The vertices in an $S$-graph} $\ig{1}{SoergGens}$ \end{figure}

\begin{defn} \label{defn:DC} Let $\DC$ (or if there is ambiguity, $\DC(W,S)$) denote the $\Bbbk$-linear monoidal category defined as follows. Objects are sequences $\un{w}$, sometimes
denoted $B_{\un{w}}$, with monoidal structure given by concatenation. The space $\Hom_{\DC}(\un{w},\un{y})$ is the free $\Bbbk$-module generated by $S$-graphs with bottom boundary
$\un{w}$ and top boundary $\un{y}$, modulo the relations below. Hom spaces will be graded by the degree of the graphs, and all the relations below are homogeneous.

The first (unwritten) relation is that two isotopic $S$-graphs are equal.

\emph{The polynomial relations:}
\begin{gather}
\label{alphais}
\begin{array}{c}
\tikz[scale=0.9]{
\draw[dashed,gray] (0,0) circle (1cm);
\draw[color=red] (0,0.5) -- (0,-0.5);
\node[draw,fill,circle,inner sep=0mm,minimum size=1mm,color=red] at (0,0.5) {};
\node[draw,fill,circle,inner sep=0mm,minimum size=1mm,color=red] at (0,-0.5) {};
} \end{array}
=
\begin{array}{c}
\tikz[scale=0.9]{
\draw[dashed,gray] (0,0) circle (1cm);
\node at (0,0) {$\alpha_s$};
} \end{array}, \\
\label{dotforcegeneral} 
\begin{array}{c}
\tikz[scale=0.9]{
\draw[dashed,gray] (0,0) circle (1cm);
\draw[color=red] (0,-1) to (0,1);
\node at (-0.5,0) {$f$};
} \end{array}
=
\begin{array}{c}
\tikz[scale=0.9]{
\draw[dashed,gray] (0,0) circle (1cm);
\draw[color=red] (0,-1) to (0,1);
\node at (0.5,0) {${\color{red}s}(f)$};
} \end{array}
+
\begin{array}{c}
\tikz[scale=0.9]{
\draw[dashed,gray] (0,0) circle (1cm);
\draw[color=red] (0,-1) to (0,-0.5);
\draw[color=red] (0,1) to (0,0.5);
\node[color=red,draw,fill,circle,inner sep=0mm,minimum size=1mm] at (0,-0.5) {};
\node[color=red,draw,fill,circle,inner sep=0mm,minimum size=1mm] at
(0,0.5) {};
\node at (0,0) {$\partial_{\color{red}s} f$};
} \end{array}.
\end{gather}

\emph{The one color relations:}
\begin{equation} \label{assoc1} \ig{1}{assoc1} \end{equation}	
\begin{equation} \label{unit} \ig{1}{unit} \end{equation}
\begin{equation} \label{needle} \ig{1}{needle} \end{equation}

\emph{The two color relations:} 
The color scheme depends slightly on the parity of $m=m_{st}<\infty$. We give one example of each relation for each
parity; the reader can guess the general form.

\begin{equation} \label{assoc2} \ig{1}{assoc2} \end{equation}
\begin{equation} \label{dot2m} 	{
	\labellist
	\small\hair 2pt
	 \pinlabel {$JW_{m-1}$} [ ] at 101 16
	 \pinlabel {$JW_{m-1}$} [ ] at 242 16
	\endlabellist
	\centering
	\ig{1}{dot2m}
	} \end{equation}

In equation \eqref{dot2m} above, the \emph{Jones-Wenzl morphism} $JW_{m-1}$ is a $\Bbbk$-linear combination of graphs constructed only out of dots and trivalent vertices. For details,
see \cite{EWGr4sb}.

\emph{The three color relations:} It will be clear from the graphs which colors represent which indices.

For a triplet of colors forming a sub-Coxeter system of type $A_1 \times I_2(m)$ for $m<\infty$, we have

\begin{eqnarray} (A_1 \times I_2(m)) && \ig{1}{A1I2m} \label{A1I2m} \end{eqnarray}

A specific example, when $m=2$, is the case $A_1 \times A_1 \times A_1$:

\begin{eqnarray} (A_1 \times A_1 \times A_1) && \ig{1}{A1A1A1} \label{A1A1A1} \end{eqnarray}

The last three relations are for types $A_3$, $B_3$, and $H_3$ respectively, and are known as the \emph{Zamolodzhikov relations}.

\begin{eqnarray} (A_3) && \ig{1}{A3} \label{A3} \end{eqnarray}

\begin{eqnarray} (B_3) && \ig{1}{B3} \label{B3} \end{eqnarray}

\begin{eqnarray} (H_3) && 
\begin{array}{c}
\begin{tikzpicture}[scale=0.2]
\def\hmax{13}

\def\aa{30}
\def\ab{90}
\def\ac{150}
\def\ada{-30}
\def\adb{-90}
\def\adc{-150}

\def\ha{18}
\def\hb{54}
\def\hc{90}
\def\hd{126}
\def\he{162}
\def\hda{-18}
\def\hdb{-54}
\def\hdc{-90}
\def\hdd{-126}
\def\hde{-162}

\begin{scope}[yscale=1.4]

\foreach \x in {1,2,...,15} \coordinate (t\x) at
(\x,\hmax);
\foreach \x in {1,2,...,15} \coordinate (b\x) at
(\x,0);

\foreach \c/\x/\y in {
1/13/2,
2/10/3,
3/7/4,
4/4/5,
5/9/5,
6/7/6,
7/4/7,
8/11/6,
9/9/7,
10/7/7.5,
11/11/8,
12/9/9,
13/6/10,
14/13/9,
15/11/10,
16/8/11
}
\node[circle,draw,inner sep = 0mm, minimum size=0mm] (c\c) at (\x,\y)
{};

\path (c1) edge [out=\hda,in=90] (b15)
                edge [out=\hdb,in=90,blue]  (b14)
                edge [out=\hdc,in=90]  (b13)
                edge [out=\hdd,in=90,blue]  (b12)
                edge [out=\hde,in=90]  (b11)
                edge [out=\ha,in=\ada,blue] (c14)
                edge [out=\hb,in=\hda,black]  (c11)
                edge [out=\hc,in=\ada,blue]  (c8)
                edge [out=\hd,in=\hda,black]  (c5)
                edge [out=\he,in=\ada,blue]  (c2);

\path (c2) edge [out=\adb,in=90,red] (b10)
                edge [out=\adc,in=90,blue]  (b9)
                edge [out=\aa,in=\adb,red]  (c8)
                edge [out=\ab,in=\hdb,blue]  (c5)
                edge [out=\ac,in=\ada,red]  (c3);

\path (c3) edge [out=\adb,in=90,blue] (b7)
                edge [out=\adc,in=90,red]  (b5)
                edge [out=\aa,in=\hdd,blue]  (c5)
                edge [out=\ab,in=\adb,red]  (c6)
                edge [out=\ac,in=\hda,blue]  (c4);

\path (c4) edge [out=\hdb,in=90,black]  (b6)
                edge [out=\hdc,in=90,blue]  (b4)
                edge [out=\hdd,in=90]  (b3)
                edge [out=\hde,in=90,blue]  (b2)
                edge [out=\ha,in=\hde] (c5)
                edge [out=\hb,in=\adc,blue]  (c6)
                edge [out=\hc,in=\hdd]  (c10)
                edge [out=\hd,in=\adb,blue]  (c7)
                edge [out=\he,in=-90]  (t1);

\path (c5) edge [out=\hdc,in=90] (b8)
                edge [out=\ha,in=\adc,blue] (c8)
                edge [out=\hb,in=\hdc]  (c11)
                edge [out=\hc,in=\adb,blue]  (c9)
                edge [out=\hd,in=\hdb]  (c10)
                edge [out=\he,in=\ada,blue]  (c6);

\path (c6) edge [out=\aa,in=\adc,red]  (c9)
                edge [out=\ab,in=\hdc,blue]  (c10)
                edge [out=\ac,in=\ada,red]  (c7);

\path (c7) edge [out=\adc,in=90,red]  (b1)
                edge [out=\aa,in=\hde,blue]  (c10)
                edge [out=\ab,in=\adc,red]  (c13)
                edge [out=\ac,in=-90,blue]  (t2);

\path (c8) edge [out=\aa,in=\adb,red]  (c14)
                edge [out=\ab,in=\hdb,blue]  (c11)
                edge [out=\ac,in=\ada,red]  (c9);

\path (c9) edge [out=\aa,in=\hdd,blue]  (c11)
                edge [out=\ab,in=\adb,red]  (c12)
                edge [out=\ac,in=\hda,blue]  (c10);

\path (c10) edge [out=\ha,in=\hde] (c11)
                edge [out=\hb,in=\adc,blue]  (c12)
                edge [out=\hc,in=\hdd]  (c16)
                edge [out=\hd,in=\adb,blue]  (c13)
                edge [out=\he,in=-90]  (t3);

\path (c11) edge [out=\ha,in=\adc,blue] (c14)
                edge [out=\hb,in=\hdc]  (t13)
                edge [out=\hc,in=\adb,blue]  (c15)
                edge [out=\hd,in=\hdb]  (c16)
                edge [out=\he,in=\ada,blue]  (c12);

\path (c12) edge [out=\aa,in=\adc,red]  (c15)
                edge [out=\ab,in=\hdc,blue]  (c16)
                edge [out=\ac,in=\ada,red]  (c13);

\path (c13) edge [out=\aa,in=\hde,blue]  (c16)
                edge [out=\ab,in=-90,red]  (t6)
                edge [out=\ac,in=-90,blue]  (t4);

\path (c14) edge [out=\aa,in=-90,red]  (t15)
                edge [out=\ab,in=-90,blue]  (t14)
                edge [out=\ac,in=\ada,red]  (c15);

\path (c15) edge [out=\aa,in=-90,blue]  (t12)
                edge [out=\ab,in=-90,red]  (t11)
                edge [out=\ac,in=\hda,blue]  (c16);

\path (c16) edge [out=\ha,in=-90,black] (t10)
                edge [out=\hb,in=-90,blue]  (t9)
                edge [out=\hc,in=-90,black]  (t8)
                edge [out=\hd,in=-90,blue]  (t7)
                edge [out=\he,in=-90,black]  (t5);

\end{scope}
\end{tikzpicture}
\end{array}
- 
\begin{array}{c}
\begin{tikzpicture}[scale=0.2]
\def\hmax{13}

\def\adc{30}
\def\adb{90}
\def\ada{150}
\def\ac{-30}
\def\ab{-90}
\def\aa{-150}

\def\hde{18}
\def\hdd{54}
\def\hdc{90}
\def\hdb{126}
\def\hda{162}
\def\he{-18}
\def\hd{-54}
\def\hc{-90}
\def\hb{-126}
\def\ha{-162}

\begin{scope}[yscale=1.4]
\begin{scope}[rotate=180]

\foreach \x in {1,2,...,15} \coordinate (t\x) at
(\x,\hmax);
\foreach \x in {1,2,...,15} \coordinate (b\x) at
(\x,0);

\foreach \c/\x/\y in {
1/13/2,
2/10/3,
3/7/4,
4/4/5,
5/9/5,
6/7/6,
7/4/7,
8/11/6,
9/9/7,
10/7/7.5,
11/11/8,
12/9/9,
13/6/10,
14/13/9,
15/11/10,
16/8/11
}
\node[circle,draw,inner sep = 0mm, minimum size=0mm] (c\c) at (\x,\y)
{};
\end{scope}

\path (c1) edge [out=\hda,in=-90] (b15)
                edge [out=\hdb,in=-90,blue]  (b14)
                edge [out=\hdc,in=-90]  (b13)
                edge [out=\hdd,in=-90,blue]  (b12)
                edge [out=\hde,in=-90]  (b11)
                edge [out=\ha,in=\ada,blue] (c14)
                edge [out=\hb,in=\hda]  (c11)
                edge [out=\hc,in=\ada,blue]  (c8)
                edge [out=\hd,in=\hda]  (c5)
                edge [out=\he,in=\ada,blue]  (c2)
;

\path (c2) edge [out=\adb,in=-90,red] (b10)
                edge [out=\adc,in=-90,blue]  (b9)
                edge [out=\aa,in=\adb,red]  (c8)
                edge [out=\ab,in=\hdb,blue]  (c5)
                edge [out=\ac,in=\ada,red]  (c3);

\path (c3) edge [out=\adb,in=-90,blue] (b7)
                edge [out=\adc,in=-90,red]  (b5)
                edge [out=\aa,in=\hdd,blue]  (c5)
                edge [out=\ab,in=\adb,red]  (c6)
                edge [out=\ac,in=\hda,blue]  (c4);

\path (c4) edge [out=\hdb,in=-90,black]  (b6)
                edge [out=\hdc,in=-90,blue]  (b4)
                edge [out=\hdd,in=-90]  (b3)
                edge [out=\hde,in=-90,blue]  (b2)
                edge [out=\ha,in=\hde] (c5)
                edge [out=\hb,in=\adc,blue]  (c6)
                edge [out=\hc,in=\hdd]  (c10)
                edge [out=\hd,in=\adb,blue]  (c7)
                edge [out=\he,in=90]  (t1);

\path (c5) edge [out=\hdc,in=-90] (b8)
                edge [out=\ha,in=\adc,blue] (c8)
                edge [out=\hb,in=\hdc]  (c11)
                edge [out=\hc,in=\adb,blue]  (c9)
                edge [out=\hd,in=\hdb]  (c10)
                edge [out=\he,in=\ada,blue]  (c6);

\path (c6) edge [out=\aa,in=\adc,red]  (c9)
                edge [out=\ab,in=\hdc,blue]  (c10)
                edge [out=\ac,in=\ada,red]  (c7);

\path (c7) edge [out=\adc,in=-90,red]  (b1)
                edge [out=\aa,in=\hde,blue]  (c10)
                edge [out=\ab,in=\adc,red]  (c13)
                edge [out=\ac,in=90,blue]  (t2);

\path (c8) edge [out=\aa,in=\adb,red]  (c14)
                edge [out=\ab,in=\hdb,blue]  (c11)
                edge [out=\ac,in=\ada,red]  (c9);

\path (c9) edge [out=\aa,in=\hdd,blue]  (c11)
                edge [out=\ab,in=\adb,red]  (c12)
                edge [out=\ac,in=\hda,blue]  (c10);

\path (c10) edge [out=\ha,in=\hde] (c11)
                edge [out=\hb,in=\adc,blue]  (c12)
                edge [out=\hc,in=\hdd]  (c16)
                edge [out=\hd,in=\adb,blue]  (c13)
                edge [out=\he,in=90]  (t3);

\path (c11) edge [out=\ha,in=\adc,blue] (c14)
                edge [out=\hb,in=\hdc]  (t13)
                edge [out=\hc,in=\adb,blue]  (c15)
                edge [out=\hd,in=\hdb]  (c16)
                edge [out=\he,in=\ada,blue]  (c12);

\path (c12) edge [out=\aa,in=\adc,red]  (c15)
                edge [out=\ab,in=\hdc,blue]  (c16)
                edge [out=\ac,in=\ada,red]  (c13);

\path (c13) edge [out=\aa,in=\hde,blue]  (c16)
                edge [out=\ab,in=90,red]  (t6)
                edge [out=\ac,in=90,blue]  (t4);

\path (c14) edge [out=\aa,in=90,red]  (t15)
                edge [out=\ab,in=90,blue]  (t14)
                edge [out=\ac,in=\ada,red]  (c15);

\path (c15) edge [out=\aa,in=90,blue]  (t12)
                edge [out=\ab,in=90,red]  (t11)
                edge [out=\ac,in=\hda,blue]  (c16);

\path (c16) edge [out=\ha,in=90,black] (t10)
                edge [out=\hb,in=90,blue]  (t9)
                edge [out=\hc,in=90,black]  (t8)
                edge [out=\hd,in=90,blue]  (t7)
                edge [out=\he,in=90,black]  (t5);

\end{scope}
\end{tikzpicture}
\end{array}
= 
\text{lower terms}
\label{H3} \end{eqnarray}

This concludes the definition of $\DC$. \end{defn}

For any $w \in W$, the space of all (linear combinations of) morphisms which factor through objects $B_y(n)$ for $y < w$ is an ideal in $\DC$, called $\DC_{< w}$. When the element $w$ is
understood, elements of this ideal are called \emph{lower terms}. For instance, in \eqref{H3} above, $w=w_0$ is meant to be the longest element of $H_3$. Unfortunately, the relation
\eqref{H3} is only in general form, as the coefficients of the lower terms which appear are not known. Thankfully, the precise form of \eqref{H3} will not be relevant for this paper.

Given an element $w \in W$, one can construct the (oriented) graph $\Ga_w$ whose vertices are the reduced expressions of $w$. There is a (bidirectional) edge in $\Ga_w$ between two
expressions if they differ by a single application of a braid relation. There is a functor from this graph to $\DC$, which sends a reduced expression $\un{w}$ to the object
$B_{\un{w}}$, and sends an edge to the appropriate $2m$-valent vertex. To a path in $\Ga_w$ one associates a morphism in $\DC$ known as a \emph{rex move}. Two distinct rex moves with the
same source and target need not be equal as morphisms in $\DC$, but they are equal modulo lower terms. For example, all the three color relations can be thought of as equalities between
two different rex moves in $\Ga_{w_0}$ (or in the case of $H_3$, equality modulo lower terms).

Note that (graded) Hom spaces are enriched in graded $R$-bimodules, since one can place a polynomial in the leftmost or rightmost region.

\begin{remark} \label{remark distant} Let us observe some of the consequences of these relations for \emph{distant colors} $s,t \in S$ with $m_{st}=2$. In this case, the two
color relations are

\begin{equation} \label{assoc2 distant} \ig{1}{assoc2distant} \end{equation}
\begin{equation} \label{dot2m distant} \ig{1}{dot2mdistant} \end{equation}

A consequence of these two relations is a ``pulling apart" relation, which implies that the objects $B_s B_t$ and $B_t B_s$ are isomorphic in $\DC$.

\begin{equation} \label{R2} \ig{1}{R2} \end{equation}

Combining these relations with the three color relations of type $A_1 \times I_2(m)$, one can conclude the following. \end{remark}

\begin{claim} \label{claim distant} Consider $S$-graphs whose colors lie in $S_1 \cup S_2 \subset S$, and contain no polynomials. Assume that every color in $S_1$ is distant from
every color in $S_2$. We say that two such $S$-graphs with the same boundary are \emph{distant-isotopic} if their $S_1$-colored subgraphs are isotopic, and their $S_2$-colored
subgraphs are isotopic. Then any two distant-isotopic $S$-graphs represent equal morphisms in $\DC$. \end{claim}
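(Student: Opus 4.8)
The plan is to isolate the role of the $4$-valent vertices --- the only generators of $\DC$ that mix a color of $S_1$ with a color of $S_2$ --- and to show that they behave exactly like the interchange isomorphisms of a symmetric monoidal structure, after which a coherence/normal-form argument finishes the job. Concretely, I would first put a given $S$-graph in generic position with respect to the height function of the strip, presenting it as a vertical composite of elementary slices, each of which contains exactly one of: a generator all of whose colors lie in $S_1$; a generator all of whose colors lie in $S_2$; or a single transverse crossing of an $S_1$-colored strand with an $S_2$-colored strand; all remaining strands in a slice being vertical identities. Reading off the slices of the first and second kinds recovers the $S_1$-subgraph and the $S_2$-subgraph respectively, and by hypothesis these are isotopic for the two $S$-graphs under comparison. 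Thus the two graphs differ only in their \emph{crossing pattern}, i.e.\ in how the two (isotopic) subgraphs are overlaid.

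Next I would catalogue the moves available on crossing patterns. The pulling-apart relation \eqref{R2}, which Remark \ref{remark distant} deduces from \eqref{assoc2 distant} and \eqref{dot2m distant}, both exhibits each $4$-valent vertex as an isomorphism $B_s B_t \xrightarrow{\sim} B_t B_s$ and provides a Reidemeister-II move cancelling a crossing against its reverse. The relation \eqref{A1I2m} lets a distant strand be slid past a $2m$-valent vertex of the other color family; its $m=2$ instance \eqref{A1A1A1} is the Reidemeister-III (Yang--Baxter) move sliding a strand past a crossing of two strands of the other color. A distant strand may likewise be slid past a dot by \eqref{dot2m distant}, and past a trivalent vertex as a consequence of these relations together with the one-color relations \eqref{assoc1}--\eqref{needle}. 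Together these say precisely that the family of $4$-valent vertices is a coherent system of natural isomorphisms relating the ``$S_1$-world'' and the ``$S_2$-world''.

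With these moves in hand I would reduce any such $S$-graph to a normal form: use Reidemeister-II to make the crossing pattern minimal, then use the sliding moves to push the remaining crossings into a single designated horizontal band of the strip. In that band, with the $S_1$-subgraph, the $S_2$-subgraph and both boundary sequences all fixed, the minimal crossing pattern is determined up to Reidemeister-III and planar isotopy. Hence each $S$-graph equals in $\DC$ a normal form that depends only on the isotopy classes of its $S_1$- and $S_2$-subgraphs (and on the fixed boundary data), and two distant-isotopic $S$-graphs are therefore equal in $\DC$.

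The hard part will be the well-definedness of this normal form: one must verify that any two sequences of the above moves carrying a crossing pattern to its minimal representative induce the same morphism in $\DC$. This is a genuine coherence statement --- the analogue of Mac Lane's coherence theorem for the symmetric-monoidal interchange between the $S_1$- and $S_2$-parts --- and it is where the three-color relations \eqref{A1I2m} and \eqref{A1A1A1} are really used. The subtlety beyond the folklore fact that distant colors can be permuted freely is that this coherence argument must be carried out in the presence of arbitrary $S_1$- and $S_2$-colored subgraphs rather than bare identity strands, which is exactly what the sliding moves catalogued above handle.
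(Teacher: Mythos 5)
Your proposal is correct and follows exactly the route the paper intends: Remark~\ref{remark distant} asserts the Claim by citing \eqref{assoc2 distant}, \eqref{dot2m distant}, the pulling-apart relation \eqref{R2}, and the $A_1 \times I_2(m)$ relation \eqref{A1I2m}, and your sketch spells out how these supply Reidemeister-II, Reidemeister-III, and the moves sliding a distant strand past dots, trivalent vertices, and $2m$-valent vertices. One tiny correction: sliding a distant strand past a trivalent vertex is directly the content of \eqref{assoc2 distant} (with $m_{st}=2$), not a further consequence requiring the one-color relations. The coherence/normal-form step you flag as ``the hard part'' is genuine, but it is precisely what the paper leaves implicit in the phrase ``one can conclude,'' so your account is, if anything, more explicit than the source.
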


\subsection{Categorification results}
\label{subsec-catfnthms}

Let $\Kar(\DC)$ denote the Karoubi envelope of $\DC$, a graded monoidal category. The following result, known as the Soergel Categorification Theorem, was originally proven by
Soergel. An easier proof was given by the authors in slightly more generality in \cite{EWGr4sb}.

\begin{thm} \label{SCT} The category $\Kar(\DC)$ is Krull-Schmidt. The indecomposable objects are $\{B_w(n)\}$ for $n \in \ZM$, parametrized up to shift by $w \in W$. Moreover, $B_w$
is characterized as the unique summand of $B_{\un{w}}$, for a reduced expression of $w$, which is not also a summand of $B_{\un{y}}$ for any shorter sequence $\un{y}$. The map $$b_s
\mapsto [B_s]$$ gives an isomorphism of $\Zvv$-algebras from $\HC(W,S,\ell)$ to $[\Kar(\DC)]$. The character map (defined below) gives an inverse isomorphism. Morphism spaces in
$\Kar(\DC)$ are free as left or right $R$-modules, and counting the graded rank of morphism spaces gives the standard pairing on $\HC$: $$([M],[N]) = \grank \Hom_{\DC}(M,N).$$
\end{thm}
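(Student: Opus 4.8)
I would follow the strategy of \cite{EWGr4sb}, whose technical engine is an explicit $R$-basis for the morphism spaces of $\DC$. The first ingredient is the \emph{character map} $\mathrm{ch}\co [\Kar(\DC)]\to\HC(W,S,\ell)$: an additive and multiplicative map defined on an object $B$ by recording the graded ranks of the ``standard layers'' of $\Hom(B,-)$ --- equivalently, by pushing $B$ through the standardization functor to honest $R$-bimodules, $B_s\mapsto R\otimes_{R^s}R(1)$, and taking its Soergel character --- normalized so that $\mathrm{ch}(B_{\un w})=b_{\un w}\define b_{s_1}\cdots b_{s_d}$ for every expression $\un w=s_1\cdots s_d$; in particular $\mathrm{ch}(B_s)=b_s$. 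Being additive, $\mathrm{ch}$ descends to the Grothendieck group with no further work, and it is the candidate inverse isomorphism.

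The heart of the argument, and the main obstacle, is the \emph{Double Leaves Theorem}: for any expressions $\un w,\un y$ the space $\Hom_{\DC}(B_{\un w},B_{\un y})$ is \emph{free} as a left (equivalently right) $R$-module, with a basis $\mathbb{LL}_{\un w,\un y}$ of \emph{double leaves} --- compositions of Libedinsky-style light leaves, constructed by induction on $\ell(\un w)$ from the generating dots, trivalent vertices and $2m$-valent vertices, and indexed by pairs of subexpressions of $\un w$ and $\un y$ with Bruhat-stroll data. Spanning is the laborious direction: one must show that every $S$-graph reduces, via repeated use of the one-colour relations \eqref{assoc1}--\eqref{needle}, the two-colour relations including the Jones--Wenzl relation \eqref{dot2m}, and the three-colour Zamolodchikov relations \eqref{A1I2m}--\eqref{H3}, to an $R$-linear combination of double leaves; carrying this out is the long computation of \cite{EWGr4sb}. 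Linear independence is comparatively soft: the graded rank of the double-leaves spanning set equals, by the subexpression combinatorics, the value $(b_{\un w},b_{\un y})$ of the standard pairing on $\HC$, while the standardization functor together with Soergel's Hom formula already exhibits $\Hom$ of at least that size, forcing equality and the basis property.

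Now I would harvest the formal consequences. Counting graded ranks of $\mathbb{LL}_{\un w,\un y}$ gives $\grank\Hom_{\DC}(B_{\un w},B_{\un y})=(b_{\un w},b_{\un y})$, which propagates to $\grank\Hom(M,N)=([M],[N])$ upon passing to the Karoubi envelope, and simultaneously shows all $\Hom$ spaces in $\Kar(\DC)$ are free over $R$ on both sides. In particular each $\End^0$ is a finite-dimensional $\Bbbk$-algebra, hence semiperfect, so $\Kar(\DC)$ has the graded Krull--Schmidt property. A standard induction on the Bruhat order (Soergel's), using the graded-rank formula, then shows that for a reduced expression $\un w$ the object $B_{\un w}$ decomposes as $B_w\oplus\bigoplus_{y<w}(\text{shifts of }B_y)$ for a unique indecomposable summand $B_w$, appearing with multiplicity one, not a summand of any shorter Bott--Samelson object; the summand $B_w$ is independent of the reduced expression because any two reduced expressions are linked by braid moves, whose $2m$-valent vertices are isomorphisms modulo $\DC_{<w}$. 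These objects exhaust the indecomposables up to shift, so $\{[B_w]\}_{w\in W}$ is a $\Zvv$-basis of $[\Kar(\DC)]$; and by additivity $\mathrm{ch}(B_w)\in H_w+\bigoplus_{y<w}\Zvv H_y$.

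Finally, $\mathrm{ch}$ carries the basis $\{[B_w]\}$ to a system unitriangular with respect to the standard basis $\{H_w\}$ of $\HC$, hence it is a $\Zvv$-module isomorphism; being also multiplicative, it is a $\Zvv$-algebra isomorphism $[\Kar(\DC)]\xrightarrow{\sim}\HC(W,S,\ell)$. Its inverse sends $b_s=\mathrm{ch}([B_s])$ to $[B_s]$, which is the asserted statement, and the identity $([M],[N])=\grank\Hom_{\DC}(M,N)$ was proved above. (One may also see the Hecke relations directly in $\DC$: the quadratic relation \eqref{eq:bb} reflects the one-colour relations, which give $B_sB_s\cong B_s(1)\oplus B_s(-1)$, and the braid relation \eqref{eq:braid} reflects the fact that the two Bott--Samelson products attached to the reduced expressions of the longest element of a finite rank-two parabolic have the same class in $[\Kar(\DC)]$, since their characters coincide --- making the existence of the inverse map transparent even without the injectivity of $\mathrm{ch}$.)
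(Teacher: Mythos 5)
The paper does not prove this theorem; it states it and cites Soergel's original work \cite{Soer07} and the diagrammatic proof of \cite{EWGr4sb}, so there is no in-paper argument to compare against. Your sketch is a reasonable outline of the strategy of \cite{EWGr4sb}: the double leaves $R$-basis of Bott--Samelson Hom spaces is the technical engine, and from the Soergel Hom formula $\grank\Hom(B_{\un w},B_{\un y})=(b_{\un w},b_{\un y})$ one deduces Krull--Schmidt (via finite-dimensionality of $\End^0$), the classification of indecomposables $B_w$ by induction on Bruhat order, and the isomorphism via a unitriangular change of basis with the standard basis.

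Your closing parenthetical contains a genuine error, however. The two Bott--Samelson objects $B_{\un{sts\cdots}}$ and $B_{\un{tst\cdots}}$ attached to the two reduced expressions of the longest element of a finite rank-two parabolic do \emph{not} have the same class in $[\Kar(\DC)]$, and their characters $b_sb_tb_s\cdots$ and $b_tb_sb_t\cdots$ do \emph{not} coincide. For instance, in type $A_2$ one has $B_sB_tB_s\cong B_{sts}\oplus B_s$ while $B_tB_sB_t\cong B_{sts}\oplus B_t$, so $[B_sB_tB_s]-[B_tB_sB_t]=[B_s]-[B_t]\neq 0$. The braid relation \eqref{eq:braid} is a relation on the standard generators $H_s$, not on the Kazhdan--Lusztig generators $b_s$; what the $b_s$ satisfy is the more involved rank-two relation of Proposition \ref{prop: quasi-split}. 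Verifying \eqref{eq:braid} for $[B_s]-v$ (the image of $H_s$) in $[\Kar(\DC)]$ without appealing to the injectivity of the character map is therefore not ``transparent'' --- it requires essentially the same unitriangularity analysis. Since this is an aside offering a second route, it does not invalidate the main line of your argument, but the alternative route you offer does not actually work.
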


Let us record a number of significant features of these indecomposable bimodules. \begin{itemize} \item The category $\DC$ also admits a duality functor, which we do not discuss. This
duality descends in the Grothendieck group to the bar involution on $\HC(W,S,\ell)$. The indecomposables $B_w$ are self-dual. \item Suppose one has two reduced expressions $\un{x}$ and
$\un{y}$ for $w \in W$. Then $B_{\un{x}}$ and $B_{\un{y}}$ each have a unique summand which does not factor through lower terms. However, these two summands are canonically isomorphic,
and this isomorphism is induced by any rex move from $\un{x}$ to $\un{y}$. \item In the Grothendieck group, one has $[B_{\un{w}}] = [B_w] + \sum_{y < w} m_{\un{w},y} [B_y]$, which is an
upper triangular decomposition. In particular, this implies that the Grothendieck group is generated as an algebra by $[B_s]$ for $s \in S$. \item Suppose that $I \subset S$ is a set of
mutually commuting reflections. Let $\un{x}$ be a sequence involving each element of $I$ once; it is a reduced expression for the longest element $w_I$ of the parabolic subgroup $W_I$.
Then $B_{w_I} \cong B_{\un{x}}$.\end{itemize}

The following theorem, originally known as the Soergel conjecture, was proven in generality by the authors in \cite{EWHodge}.

\begin{thm} Suppose that the realization is obtained by base change from a realization defined over $\RM$. Under the isomorphism $\HC(W,S,\ell) \to [\Kar(\DC)]$, one has $b_w \mapsto [B_w]$ for all $w \in W$. \end{thm}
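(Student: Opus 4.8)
The plan is to prove this, the Soergel conjecture, by the Hodge-theoretic method of \cite{EWHodge}; here is the architecture. First I would reformulate, following Soergel. By Theorem \ref{SCT} we already have the graded Krull--Schmidt property, the classification of indecomposables, the ring isomorphism $\HC(W,S,\ell)\cong[\Kar(\DC)]$, and the fact that each class $[B_w]$ is self-dual and congruent to $H_w$ modulo lower-order standard basis elements. Fixing $s$ with $sv>v$, one has a decomposition $B_s\ot_R B_v\cong B_{sv}\oplus C$ in which $C$ is supported on objects $B_z(k)$ with $z<v$; using the local intersection pairing of \S\ref{subsec-decomposing} together with the Kazhdan--Lusztig recursion $b_sb_v=b_{sv}+\sum_{z<v,\ sz<z}\mu(z,v)b_z$ and an induction on $\ell(w)$, the theorem for $w=sv$ becomes equivalent to the non-degeneracy of a finite family of symmetric bilinear forms --- the degree-zero ``intersection forms'' $I_{sv,z}$ attached to the pairs $(sv,z)$ --- since non-degeneracy is exactly what forces $C$ to reproduce the predicted lower terms of $b_sb_v$, with no summand $B_z(k)$ for $k\neq0$.

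Next I would install the Lefschetz machinery. Using the hypothesis that the realization descends to one over $\RM$ (one may picture the geometric representation), for each $w$ set $\ov{B_w}=\Bbbk\ot_R B_w$, a finite-dimensional graded real vector space; the self-duality of $B_w$ and the Frobenius structure on $R$ endow $\ov{B_w}$ with a non-degenerate graded symmetric form $\langle-,-\rangle$. Choose $\rho\in\hg^*$ with $\rho(\al_s^\vee)>0$ for all $s\in S$; left multiplication by $\rho$ descends to a degree $2$ operator $L_\rho$ on $\ov{B_w}$. I would then state, for each $w$: $\mathbf{hL}_w$, hard Lefschetz, asserting that $L_\rho^i$ is an isomorphism $\ov{B_w}^{-i}\to\ov{B_w}^{i}$ for every $i\geq0$; and $\mathbf{HR}_w$, the Hodge--Riemann bilinear relations, asserting that $(a,b)\mapsto\langle a,L_\rho^i b\rangle$ has a definite sign, with the usual alternation in $i$, on the primitive part of $\ov{B_w}^{-i}$. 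Granting $\mathbf{hL}_w$ and $\mathbf{HR}_w$ for all $w$, each form $I_{sv,z}$ of the first paragraph inherits definiteness, hence non-degeneracy, and $[B_w]=b_w$ follows.

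The substance is then the simultaneous induction on $\ell(w)$ proving the package $(\mathbf{S}_w,\mathbf{hL}_w,\mathbf{HR}_w)$, with trivial base cases $w=e$ and $w=s$. For the step, write $w=sv$ with $sv>v$ and study $B_s\ot_R B_v$. The short exact sequences relating $\ov{B_s\ot_R B_v}$ to $\ov{B_v}$, built from the unit $R\hookrightarrow B_s$ and the counit $B_s\twoheadrightarrow R(-1)$, transport $\mathbf{HR}_v$ into hard Lefschetz for $\ov{B_s\ot_R B_v}$ equipped with a suitable non-minimal Lefschetz operator --- the ``weak Lefschetz substitute.'' Then elementary ``Lefschetz linear algebra'': every summand of $C$ has length $\leq\ell(v)<\ell(w)$ and so satisfies the full package by induction, whence hard Lefschetz for $\ov{B_s\ot_R B_v}$ descends to $\mathbf{hL}_w$ for the complementary summand $\ov{B_w}$, and a semicontinuity argument in the style of de Cataldo--Migliorini forces the degree-zero intersection form on $\ov{B_s\ot_R B_v}$ to be non-degenerate, which is $\mathbf{S}_w$. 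Finally one upgrades $\mathbf{hL}_w$ to $\mathbf{HR}_w$: the set of $\rho$ with $\rho(\al_s^\vee)>0$ for which $\mathbf{HR}_w$ holds is open (given $\mathbf{hL}_w$) and closed (by additivity of the signatures of the Hodge--Riemann forms under degenerations of $B_w$ into shorter objects, where HR is already available) in the connected dominant cone, hence is everything. I expect the main obstacle to be exactly this propagation of the Hodge--Riemann relations through the inductive step: neither the weak Lefschetz substitute, nor the transfer of HR across the exact sequences, nor the limiting argument is formal, and it is precisely here that one must use that the realization lives over $\RM$ and import genuine real Hodge-theoretic reasoning, whereas the reformulation of the first paragraph and the linear algebra above are essentially bookkeeping.
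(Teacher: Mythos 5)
The paper offers no proof of this statement; it records the result and attributes it directly to \cite{EWHodge}. Your sketch is a faithful high-level outline of the Hodge-theoretic argument given there --- reduction to non-degeneracy of the degree-zero local intersection forms, the hard Lefschetz and Hodge--Riemann package on $\ov{B_w} = \Bbbk \ot_R B_w$, the simultaneous induction on $\ell(w)$, the weak Lefschetz substitute built from the unit and counit of $B_s$, and the signature-continuity argument propagating the Hodge--Riemann relations across the dominant cone --- so you and the paper agree in relying entirely on the Elias--Williamson proof.
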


One of the most significant implications of this theorem is the graded orthonormality of Hom spaces (c.f. section \ref{subsec-Heckeeq}). In other words, $\End(\oplus_{w \in W} B_w)$ is
concentrated in non-negative degree, and the only degree zero maps are (scalar multiples of) identity maps. Therefore, $\Kar(\DC)$ is a mixed category.

Given any object $M \in \Kar(\DC)$, let $P_{w,M} \in \NM[v,v^{-1}]$ denote the graded dimension of the morphism space $\Hom(M,B_w)$ modulo $\DC_{< w}$. For any choice of reduced
expression $\un{w}$, this morphism space is isomorphic to $\Hom(M,B_{\un{w}})$ modulo lower terms. The sum $\sum_{w \in W} P_{w,M} H_w \in \HC(W,S,\ell)$ is called the \emph{character} of
$M$. Character is obviously additive under direct sums, and thus yields a $\Zvv$-module map $[\Kar(\DC)] \to \HC(W,S,\ell)$. As Theorem \ref{SCT} states, this gives the inverse
isomorphism to the map $b_s \mapsto [B_s]$.

Let us fix a reduced expression $\un{w}$. For any sequence $\un{x}$, Libedinsky \cite{LibLL} gives a (non-canonical) combinatorial construction of morphisms in $\Hom(B_{\un{x}},
B_{\un{w}})$, called \emph{light leaves}, which descend to a (homogeneous) basis of $\Hom(B_{\un{x}},B_w)$ modulo $\DC_{< w}$. We give light leaves a diagrammatic interpretation in
\cite{EWGr4sb}. For an idempotent $e \in \End(B_{\un{x}})$, one can compute the character of the image of $e$ by computing the graded dimension of $\Hom(B_{\un{x}},B_{\un{w}}) e$ modulo
$\DC_{< w}$. This is not easy to do in general, but light leaves make the computation of individual cases fairly explicit.

Libedinsky's light leaves can be used to construct a cellular basis of $\DC$, known as the basis of \emph{double leaves}. For an reduced expression $\un{w}$, the identity of
$\End(B_{\un{w}})$ can be considered to span the highest cell of this endomorphism ring. The idempotent which projects from $B_{\un{w}}$ to $B_w$ is the unique idempotent which, expressed
in the double leaves basis, has a non-zero coefficient of the identity; moreover, this coefficient is 1.

\section{Folding}
\label{sec-folding}

Fix a $G$-split embedding $(W,S,L) \to (W',S',\ell)$. Let $\DC$ denote the category defined above for $(W',S')$. Then there is a strict action of $G$ on $\DC$ given by permuting colors.
In other words, $G$ acts on the generating objects by sending $B_s$ to $B_{g(s)}$; $G$ acts on $S$-graphs by permuting the colors of edges. We assume henceforth that the realization of
$(W',S')$ used to define $\DC$ is the base change of a realization defined over $\RM$.

As discussed in section \ref{subsec-catfnthms}, the Soergel conjecture implies that $\Kar(\DC)$ is mixed. This in turn allows us to apply the results of section
\ref{subsec-abelian}, and to state a great deal about the equivariant category $\DC_G$. For instance, Corollary \ref{cor:forgetful} already implies the following result.

\begin{prop} If $G$ is abelian, then there is an isomorphism of $\Zvv$-algebras from $[\DC]_e$ to $\HC(W',S',\ell)^G$. \end{prop}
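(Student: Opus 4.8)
The plan is to assemble this from the Soergel conjecture together with the abstract results of \S\ref{subsec-abelian}, applied to $\CC = \Kar(\DC)$. First I would record that the $G$-action on $\DC$ by permutation of colors extends to $\Kar(\DC)$ and meets all the hypotheses of \S\ref{subsec-indecomposable}--\S\ref{subsec-abelian}: it is a strict action by monoidal autoequivalences, it commutes with the grading shift, and it commutes (up to canonical isomorphism) with the contravariant duality functor, since that functor is defined symmetrically in the colors. By the Soergel conjecture, as recalled in \S\ref{subsec-catfnthms}, $\Kar(\DC)$ is a self-dual mixed category: every indecomposable $B_w$ is self-dual and $\gdim \Hom(B_w, B_y) = \de_{wy} + v\NM[v]$. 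Hence Corollary \ref{cor:forgetful} applies and the forgetful functor $\For$ induces an isomorphism of $\Zvv$-modules $[\Kar(\DC)]_e \xrightarrow{\sim} [\Kar(\DC)]^G$, where the right-hand side is the subring of $G$-invariants of the Grothendieck ring. Because $\For$ is a monoidal functor it respects products on (weighted) Grothendieck groups, so — using also the product formula of Corollary \ref{cor:trace}, where for $g=e$ all traces are dimensions — this is an isomorphism of $\Zvv$-algebras.

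Next I would transport this across the Soergel Categorification Theorem. The isomorphism $\HC(W',S',\ell) \xrightarrow{\sim} [\Kar(\DC)]$ of Theorem \ref{SCT}, determined by $b_s \mapsto [B_s]$, intertwines the two $G$-actions, since $G$ permutes the generators $b_s$ exactly as it permutes the objects $B_s$. Restricting to $G$-invariants yields an isomorphism of $\Zvv$-algebras $\HC(W',S',\ell)^G \xrightarrow{\sim} [\Kar(\DC)]^G$. Composing with the inverse of the isomorphism from the previous paragraph, and identifying $[\DC]_e$ with $[\Kar(\DC)]_e$ via the upper-triangular relation $[B_{\un{w}}] = [B_w] + \sum_{y<w} m_{\un{w},y}[B_y]$ (the same relation that identifies $[\DC]$ with $[\Kar(\DC)]$), produces the desired $\Zvv$-algebra isomorphism $[\DC]_e \cong \HC(W',S',\ell)^G$.

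There is no single deep obstacle here: once the machinery is in place the argument is essentially a two-line composition of known isomorphisms. The real content lies in the inputs — the Soergel conjecture supplying the mixedness of $\Kar(\DC)$, and the abstract Corollary \ref{cor:forgetful}, whose proof rests on Proposition \ref{prop:basis}. The only points I expect to need genuine (if routine) care are bookkeeping ones: verifying that the color-permutation action commutes with the duality functor so that $\Kar(\DC)_G$ inherits a duality and the self-dual mixed structure descends to the equivariant category as in \S\ref{subsec-indecomposable}; and verifying that the \emph{a priori} only $\Zvv$-linear isomorphism of Corollary \ref{cor:forgetful} is multiplicative, which reduces to the monoidality of $\For$ together with Corollary \ref{cor:trace}. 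Checking that $[\DC]_e$ and $[\Kar(\DC)]_e$ may be identified is harmless for the same upper-triangular reason.
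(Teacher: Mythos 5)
Your proposal is correct and follows essentially the same route the paper intends: the paper itself gives no separate proof but simply remarks that, since the Soergel conjecture makes $\Kar(\DC)$ mixed, Corollary~\ref{cor:forgetful} ``already implies'' the result. Your write-up merely makes explicit the bookkeeping the paper leaves implicit --- that the color-permutation action satisfies the hypotheses of \S\ref{subsec-abelian}, that $\For$ is monoidal so the Corollary~\ref{cor:forgetful} isomorphism is one of rings, and that the $G$-equivariance of the isomorphism of Theorem~\ref{SCT} carries $\HC(W',S',\ell)^G$ over to $[\Kar(\DC)]^G$.
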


Let us be precise in naming the indecomposable objects of $\DC_G$. Let $\un{w}$ be a reduced expression for $w \in W'$, and let us fix an inclusion $B_w \sumset B_{\un{w}}$ (this is
unique, up to scalar). For any $g \in G$, this also fixes an inclusion $B_{g(w)} \sumset B_{g(\un{w})}$. Suppose that $g(w)=w$, so that $\un{w}$ and $g(\un{w})$ are two different reduced
expressions for the same element. Then any rex move $B_{\un{w}} \to B_{g(\un{w})}$ induces the canonical isomorphism $B_w \to g(B_w)$. Together, the collection of these canonical
isomorphisms places the \emph{standard} equivariant structure on $B_w$, which we denote as $(B_w, +1)$. Any other equivariant structure has the form $(B_w \sqot V)$ for some representation $V$ of the stabilizer $G_w$.

One immediate consequence is the following.

\begin{claim} Let $\un{w} = s_1 s_2 \ldots$ denote a reduced expression in $W$, for an element $w \in W \subset W'$. Then $(B_w,+1)$ is the top summand of $(B_{s_1},+1) \ot (B_{s_2},+1)
\ot \cdots$ in $\DC_G$. \end{claim}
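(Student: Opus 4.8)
\emph{Proof proposal.} The plan is to establish the statement first inside $\DC$, ignoring the group action, and then to upgrade it; the only genuine work is to identify the equivariant structure on the top summand. Write $d$ for the length of $\un w$, so $w=s_1\cdots s_d$ in $W$; since $\un w$ is reduced, $L(w)=\sum_i L(s_i)$, and hence in $W'$ one has $\ell(\phi_I(w))=\ell(w_{I(s_1)})+\cdots+\ell(w_{I(s_d)})$, i.e. the product $w_{I(s_1)}\cdots w_{I(s_d)}$ is length--additive, so concatenating reduced expressions $\un x_i$ of the $w_{I(s_i)}$ yields a reduced expression $\un x$ of $\phi_I(w)$. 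By the Soergel conjecture (in the form recalled in \S\ref{subsec-catfnthms}), the character of $B_{w_{I(s_1)}}\ot\cdots\ot B_{w_{I(s_d)}}$ is $b_{w_{I(s_1)}}\cdots b_{w_{I(s_d)}}$. Each $b_{w_{I(s_i)}}$ is the coefficient--$1$ leading term of $b_{\un x_i}$, the remaining terms being indexed by $y<w_{I(s_i)}$; expanding the product, the main term $b_{\un x}$ contributes $b_{\phi_I(w)}$ with coefficient $1$, and every other contribution is supported on elements of length $<L(w)$. Thus $B_{\phi_I(w)}$ occurs in $B_{w_{I(s_1)}}\ot\cdots\ot B_{w_{I(s_d)}}$ with multiplicity one, and every other indecomposable summand is some $B_y$ with $\ell(y)<L(w)$; this is what ``top summand'' means here.

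Next I would transport this to $\DC_G$. By \S\ref{subsec-indecomposable}, the indecomposable summands of $(B_{s_1},+1)\ot\cdots\ot(B_{s_d},+1)$ in $\DC_G$ have the form $(B_{Gy}\sqot V)$, and applying $\For$ recovers the decomposition above. Since $w\in (W')^G$, the element $\phi_I(w)$ is $G$--fixed, so its orbit is a singleton and $G_{\phi_I(w)}=G$; comparing multiplicities through $\For$ shows that exactly one summand $(B_{\phi_I(w)}\sqot V)$ occurs, with $V$ a one--dimensional character of $G$, and all other summands lie over orbits $Gy$ with $\ell(y)<L(w)$. It therefore remains to show $V$ is trivial, equivalently that the $G$--action of \eqref{Gxaction} on the one--dimensional space $V^0(M,B_{\phi_I(w)})$ (with $M:=B_{w_{I(s_1)}}\ot\cdots\ot B_{w_{I(s_d)}}$ and $\psi$ the chosen structure map of $(B_{\phi_I(w)},+1)$) is trivial. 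Using the compatibility identity $g(\psi_{g^{-1}})\circ\psi_g=\psi_e=\mathrm{id}$, it suffices to exhibit one morphism $f\in\Hom^0_{\DC}(M,B_{\phi_I(w)})$ outside the kernel of the local intersection pairing which is $G$--equivariant modulo $\DC_{<\phi_I(w)}$, i.e. $g(f)\circ\phi_g=\psi_g\circ f$ modulo lower terms, where $\phi$ is the tensor--product structure on $M$.

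Such an $f$ is the canonical ``projection onto the top''. Fix the canonical inclusions $c_i\colon B_{w_{I(s_i)}}\hookrightarrow B_{\un x_i}$ and let $j\colon B_{\phi_I(w)}\hookrightarrow B_{\un x}=B_{\un x_1}\ot\cdots\ot B_{\un x_d}$ be the canonical inclusion of the top summand, with matching projection $p$, so $p\circ j=\mathrm{id}$; set $f:=p\circ(c_1\ot\cdots\ot c_d)$. Because $c_1\ot\cdots\ot c_d$ is a split inclusion and $B_{\phi_I(w)}$ has multiplicity one in both $M$ and $B_{\un x}$, the morphism $f$ is top. Now invoke that the standard equivariant structures are built from rex moves: for $g\in G$ the sequences $\un x_i$ and $g(\un x_i)$ are reduced expressions of the $G$--fixed element $w_{I(s_i)}$, so there is a rex move $r^{(i)}_g\colon B_{\un x_i}\to B_{g(\un x_i)}$ with $g(c_i)\circ\phi^{(i)}_g=r^{(i)}_g\circ c_i$ modulo lower terms, this being the very definition of the structure map $\phi^{(i)}_g$ of $(B_{s_i},+1)$. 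Tensoring, $r_g:=r^{(1)}_g\ot\cdots\ot r^{(d)}_g$ is again a rex move $B_{\un x}\to B_{g(\un x)}$, and $g(c_1\ot\cdots\ot c_d)\circ\phi_g=r_g\circ(c_1\ot\cdots\ot c_d)$ modulo lower terms. Choosing the representative of $(B_{\phi_I(w)},+1)$ so that $\psi_g$ is the canonical isomorphism induced by this $r_g$, one gets $g(j)\circ\psi_g=r_g\circ j$ modulo lower terms, and hence (post--composing with $g(p)$ and using that $r_g$ carries the top idempotent $j\circ p$ of $B_{\un x}$ to that of $B_{g(\un x)}$ modulo lower terms, a feature of rex moves recorded in \S\ref{subsec-catfnthms}) that $\psi_g\circ p=g(p)\circ r_g$ modulo lower terms. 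Combining: $g(f)\circ\phi_g=g(p)\circ r_g\circ(c_1\ot\cdots\ot c_d)=\psi_g\circ p\circ(c_1\ot\cdots\ot c_d)=\psi_g\circ f$ modulo $\DC_{<\phi_I(w)}$, as desired.

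The main obstacle is precisely this last bookkeeping: propagating the ``modulo lower terms'' identities through the tensor products and compositions, and using in the right places that rex moves, and tensor products of rex moves, are compatible with the canonical identifications of top summands (this in turn leans on $\DC_{<\phi_I(w)}$ being an ideal, and on Claim~\ref{claim:multiplicity} for the translation into multiplicity spaces). Once it is in place, \eqref{Gxaction} gives $g\cdot f=g(\psi_{g^{-1}})\circ g(f)\circ\phi_g=g(\psi_{g^{-1}})\circ\psi_g\circ f=f$ in $V^0(M,B_{\phi_I(w)})$, so the character $V$ is trivial and the top summand of $(B_{s_1},+1)\ot\cdots\ot(B_{s_d},+1)$ is $(B_{\phi_I(w)},+1)=(B_w,+1)$.
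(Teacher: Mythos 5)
Your proposal is correct and follows essentially the same route as the paper: the crux in both cases is that the equivariant structure map on the tensor product $(B_{s_1},+1)\ot\cdots\ot(B_{s_d},+1)$ is a tensor product of rex moves, hence itself a rex move, and rex moves induce the canonical identification of top summands, forcing the induced equivariant structure on $B_{\phi_I(w)}$ to be the standard one. The paper compresses this into three sentences (taking the multiplicity-one statement in $\DC$ as known and simply asserting the rex-move observation), whereas you expand it by first deriving the multiplicity-one statement from a character calculation and then carrying out the ``modulo lower terms'' bookkeeping with the explicit morphism $f=p\circ(c_1\ot\cdots\ot c_d)$; the content is the same, and your fuller version is a reasonable elaboration of what the paper leaves to the reader.
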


Warning: $s_i$ is not a simple reflection in $W'$, but instead a product of commuting simple reflections $I(s_i)$. The indecomposable $B_{s_i}$ is in fact isomorphic to $B_{t_1} B_{t_2} \cdots B_{t_{L(s_i)}}$ for any order on the simple reflections $t_j \in R(s_i)$.

\begin{proof} We know that $B_w$ is the top summand of $B_{s_1} \ot B_{s_2} \ot \cdots$ inside $\DC$, and the space of possible idempotents is one-dimensional. The equivariant structure
map of $(B_{s_1},+1) \ot (B_{s_2},+1) \ot \ldots$ is a tensor product of rex moves, and is therefore itself a rex move. This induces the standard equivariant structure on $B_w$.
\end{proof}

Our goal is to compute the weighted Grothendieck ring $[\DC]_\si$ for $\si \in G$. We will not be able to do this in general, though we can do it when $G$ is abelian and $\si \in G$
acts transitively on each orbit. In this case, the reductions of section \ref{subsec-reductions} allow us to assume that $G$ is cyclic, generated by $\si$. The results of section
\ref{subsec-abelian} imply that $[\DC]_\si$ is a free $\Zvv$-module with basis given by $[B_w,+1]$ for $w \in (W')^G = W$.

\begin{cor} Suppose that $G$ is abelian, and $\si \in G$ is such that $\{ [B_w,+1] \}_{w \in W \subset W'}$ span $[\DC]_\si$. Then the elements $\{ [B_s,+1] \}_{s \in S}$ generate
$[\DC]_\si$ as a ring. \end{cor}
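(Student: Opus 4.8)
The plan is to prove the statement by induction on $L(w)=\ell_{W'}(\phi_I(w))$ (Definition~\ref{defn:embedded}), the length of $w\in W$ measured inside $W'$. Let $A\subseteq[\DC]_\si$ be the $\Zvv$-subalgebra generated by $\{[B_s,+1]\}_{s\in S}$. Since $\{[B_w,+1]\}_{w\in W}$ spans $[\DC]_\si$ by hypothesis, it suffices to show that $[B_w,+1]\in A$ for every $w\in W$. Before beginning the induction I would first record that the spanning hypothesis forces $(W')^\si=W$: by Proposition~\ref{prop:basis} the classes $[B_{Gx},+1]$ over orbits pointwise-fixed by $\si$ form a $\Zvv$-basis of $[\DC]_\si$; the elements $[B_w,+1]$ with $w\in W=(W')^G$ lie among these basis elements; and a spanning subset of a basis must be the whole basis, so every $x\in W'$ fixed by $\si$ is $G$-fixed. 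The base case $w=e$ is immediate, since $[B_e,+1]$ is the unit of the ring $[\DC]_\si$.

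For the inductive step I would fix $w\in W$ with $L(w)>0$, choose a reduced expression $\un{w}=s_1\cdots s_d$ of $w$ in $W$, and set $M=(B_{s_1},+1)\ot\cdots\ot(B_{s_d},+1)$ in $\DC_G$, so that $[M]=[B_{s_1},+1]\cdots[B_{s_d},+1]\in A$. The central input is the Claim immediately preceding this corollary, which identifies $(B_w,+1)$ as the top summand of $M$. I would pair it with the following statement about the underlying (non-equivariant) object. Each $B_{s_i}=B_{w_{I(s_i)}}$ is the top summand of the Bott--Samelson object $B_{\un{t}^{(i)}}$ for a reduced expression $\un{t}^{(i)}$ of $w_{I(s_i)}$ in $W'$, and the concatenation $\un{x}=\un{t}^{(1)}\cdots\un{t}^{(d)}$ is a \emph{reduced} expression of $\phi_I(w)$ in $W'$ — this is exactly the additivity of $L$ along $\un{w}$, valid because $\un{w}$ is reduced. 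Hence $\For(M)=B_{s_1}\ot\cdots\ot B_{s_d}$ is a direct summand of $B_{\un{x}}$, so by Theorem~\ref{SCT} and Krull--Schmidt it decomposes in $\DC$ as $B_{\phi_I(w)}$ (appearing once, in degree zero) together with summands $B_y(k)$ for various $y<_{W'}\phi_I(w)$.

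Next I would pass to $[\DC]_\si$ via Corollary~\ref{cor:trace}, which gives $[M]=\sum_{k}\sum_{x:\si x=x}\Tr_\si V^k(M,B_x)\cdot v^k\,[B_x,+1]$. Only $\si$-fixed $x$ occur, and by the first step all such $x$ lie in $W$; moreover a term vanishes unless $B_x(k)$ is a summand of $\For(M)$, leaving only $x=\phi_I(w)$ with $k=0$, or $x<_{W'}\phi_I(w)$. The term $x=\phi_I(w)$ contributes precisely $[B_w,+1]$, because $V^0(M,B_{\phi_I(w)})$ is one-dimensional and carries the trivial $\si$-action (combine the preceding Claim with Claim~\ref{claim:multiplicity}), so its $\si$-trace is $1$. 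Every remaining term is indexed by some $x\in W$ with $x<_{W'}\phi_I(w)$, hence $L(x)=\ell_{W'}(x)<\ell_{W'}(\phi_I(w))=L(w)$, so $[B_x,+1]\in A$ by the inductive hypothesis, while its coefficient lies in $\Zvv$. Rearranging then gives $[B_w,+1]=[M]-(\text{lower terms})\in A$, which closes the induction.

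The trace bookkeeping and the degree-zero multiplicity computation are routine, following immediately from Corollary~\ref{cor:trace} and the preceding Claim. The one point requiring real care — which I would state as a clean sub-lemma before starting — is the comparison of combinatorial data between $W$ and $W'$: that replacing each $s_i$ in a reduced expression for $w$ by a reduced word for $w_{I(s_i)}$ produces a reduced expression in $W'$, so that the ``lower'' summands supplied by the Soergel decomposition are genuinely indexed by elements of strictly smaller $W'$-length. This is precisely what makes the induction on $L$ well-founded.
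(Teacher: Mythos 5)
Your proof is correct and takes essentially the same route as the paper's: it invokes the Claim immediately preceding the corollary to identify $(B_w,+1)$ as the top summand of a tensor product of the $(B_s,+1)$'s, and then runs an upper-triangularity induction on $L(w)=\ell_{W'}(\phi_I(w))$. Your one substantive addition is the preliminary observation that the spanning hypothesis forces $(W')^\si = W$. This is not cosmetic: a priori a lower summand $(B_{Gy}\sqot V)$ of the tensor product could have $Gy$ a non-singleton orbit that is nevertheless fixed pointwise by $\si$, in which case $[B_{Gy},+1]$ is a basis element of $[\DC]_\si$ that is not of the form $[B_v,+1]$ for any $v\in W$, and the induction would not close. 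The paper's phrase ``whose images in the Grothendieck group are in the span of $[B_v,+1]$'' silently relies on exactly this point, so isolating it as a lemma is a genuine improvement in rigor. One small caution, which affects the paper as much as your write-up: the coefficients $\Tr_\si V^k(M,B_x)$ appearing via Corollary \ref{cor:trace} are a priori sums of roots of unity rather than elements of $\ZM$, so the assertion that the lower-term coefficients lie in $\Zvv$ either needs the $\Zvv$-form of the spanning hypothesis, or should simply be read over whatever base ring $[\DC]_\si$ is taken to be (the paper is somewhat vague about this).
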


\begin{proof} By the previous claim, $(B_w,+1)$ is a summand of a tensor product of various $(B_s,+1)$, having multiplicity one. The remaining summands are $(B_v \sqot V)$ for $v < w$,
whose images in the Grothendieck group are in the span of $[B_v,+1]$. Therefore, the fact that each $[B_w,+1]$ is in the subring generated by $[B_s,+1]$ is a simple upper-triangularity
argument. \end{proof}

\begin{thm} \label{thm:main} Let $G$ act on $(W',S')$, and let $(W,S,L)$ be the corresponding quasi-split embedded Coxeter subgroup. Suppose $G$ is abelian, and $\si \in G$ acts
transitively on each $G$-orbit in $S'$. Then there is an isomorphism of $\Zvv$-algebras $\HC(W,S,L) \to [\DC]_\si$, sending $b_w \mapsto [B_w,+1]$. The weighted character map (defined
in \S\ref{subsec-wtchar}) provides an inverse isomorphism. \end{thm}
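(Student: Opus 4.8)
The plan is to build the isomorphism in the direction $\HC(W,S,L)\to[\DC]_\si$ out of the Kazhdan--Lusztig presentation of the Hecke algebra, reduce well-definedness to a finite list of equivariant decomposition computations, and then conclude bijectivity by a rank count together with the weighted character map. First I would pass to the cyclic case: since $G$ is abelian and $\si$ acts transitively on each $G$-orbit in $S'$, the reductions of \S\ref{subsec-reductions} show that $[\DC]_\si$ is unchanged if $G$ is replaced by $\langle\si\rangle$, so we may assume $G=\langle\si\rangle$. Then \S\ref{subsec-abelian} gives (using that $\Kar(\DC)_G$ is mixed, via the Soergel conjecture) that $[\DC]_\si$ is free over $\Zvv$ on $\{[B_w,+1]\}_{w\in W}$, and \S\ref{subsec-KLpresentation} presents $\HC(W,S,L)$ by the KL generators $b_s$ ($s\in S$), the quadratic relation \eqref{eq:bb}, and, for each $s,t\in S$ with $m_{st}<\infty$, the dihedral relation of Proposition \ref{prop: quasi-split}. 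Using this presentation I would \emph{define} $\Phi\co\HC(W,S,L)\to[\DC]_\si$ by $b_s\mapsto[B_s,+1]$ --- recalling that $B_s$ denotes $B_{w_{I(s)}}\cong B_{t_1}\cdots B_{t_{L(s)}}$ for any ordering of the commuting reflections in $I(s)$ --- and the content of the proof is to verify that the defining relations are sent to identities in $[\DC]_\si$.

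This verification is precisely the body of \S\ref{sec-folding}. For each $s,t\in S$ with $m_{st}<\infty$, the finite sub-Coxeter system on $I(s)\cup I(t)\subset S'$ is one of those listed in Proposition \ref{dihedral possibilities} (copies of $I_2(m)$, of $A_1$'s, $A_3$, $A_4$, $D_4$, or $F_4$); I would pick a reduced word for the longest element $w_0$ of $W_{s,t}$ as a word in $S$, read it as a tensor product $(B_{s_1},+1)\ot(B_{s_2},+1)\ot\cdots$ of equivariant Soergel bimodules, and decompose that product in $\DC_G$. In practice this means: compute the morphism spaces $V^k(B_{\un x},B_y)$ using Libedinsky's light leaves together with Claim \ref{claim distant} to handle the distant colours inside each $I(s)$; determine the $G_y$-action on $V^k(B_{\un x},B_y)$ from the standard equivariant structures via formula \eqref{Gxaction}; and then apply Corollary \ref{cor:trace}. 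That corollary converts the decomposition into an identity in $[\DC]_\si$ in which every summand $M_{Gx}$ supported on a non-fixed orbit ($\si x\ne x$) contributes $0$ --- killing exactly the ``blue'' terms $\sum_y b_y$ of \S\ref{subsec-invariants} --- while each integer multiplicity is replaced by the $\si$-trace of the corresponding multiplicity representation, which corrects the ``red'' terms $n\,b_x$ (for instance in the $A_1\times A_1$ case the degree-zero multiplicity space carries the regular representation of $\ZM/2\ZM$, of trace $0$, so $[B_s,+1]^2=(v^{L(s)}+v^{-L(s)})[B_s,+1]$, recovering \eqref{eq:bb}). Comparing the output, case by case, with Proposition \ref{prop: quasi-split} shows that $\Phi$ is a well-defined homomorphism of $\Zvv$-algebras; following the precedent of that proposition, the case $m_{st}=8$ (with $W'$ of type $F_4$) is omitted.

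With $\Phi$ in hand the rest is formal. It is surjective by the corollary immediately preceding the theorem, which asserts that the $[B_s,+1]$ generate $[\DC]_\si$ as a ring. Both $\HC(W,S,L)$ (standard basis $\{H_w\}$) and $[\DC]_\si$ are free $\Zvv$-modules of rank $|W|$, and a surjective homomorphism between free modules of equal finite rank over the commutative ring $\Zvv$ is an isomorphism --- after identifying source and target with $\Zvv^{|W|}$ it becomes a surjective endomorphism of a finitely generated module, hence injective. To identify $\Phi(b_w)$ with $[B_w,+1]$, I would invoke the weighted character map $\mathrm{ch}^\si\co[\DC]_\si\to\HC(W,S,L)$ of \S\ref{subsec-wtchar}: it is a $\Zvv$-algebra homomorphism with $\mathrm{ch}^\si([B_s,+1])=b_s$ and $\mathrm{ch}^\si([B_w,+1])=b_w$, so $\mathrm{ch}^\si\circ\Phi$ fixes every KL generator and is therefore the identity of $\HC(W,S,L)$; thus $\Phi$ and $\mathrm{ch}^\si$ are mutually inverse and $\Phi(b_w)=\Phi(\mathrm{ch}^\si([B_w,+1]))=[B_w,+1]$. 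Alternatively one can argue by induction on $\ell(w)$: the ``top summand'' claim of \S\ref{sec-folding} gives $\Phi(b_{\un x})=[B_{\un x},+1]=[B_w,+1]+(\text{Bruhat-lower terms})$ for a reduced word $\un x$ of $w$, and matching this against $b_{\un x}=b_w+(\text{lower terms})$ in $\HC(W,S,L)$, using compatibility of $\Phi$ with the bar involutions and triangularity of the standard basis, pins down $\Phi(b_w)=[B_w,+1]$.

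The one genuinely hard step is the case-by-case equivariant decomposition of the second paragraph, and within it the most intricate is the $D_4$ case (folding to $G_2$, $m_{st}=6$), where one must track the $G$-representations on the multiplicity spaces in all positive degrees so as to reproduce the coefficients of Example \ref{compare D4G2}; everything else, including the passage from the categorified braid-like relations to the full isomorphism, is either formal or an instance of the abstract machinery of \S\ref{sec-eq-and-wt}.
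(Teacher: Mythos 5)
Your outline follows the paper's own proof quite closely: define $\Phi$ on the Kazhdan--Lusztig presentation of \S\ref{subsec-KLpresentation}, verify the quadratic and dihedral relations via the equivariant decompositions of \S\ref{sec-folding}, deduce surjectivity from the generation corollary preceding the theorem, and identify $\Phi(b_w)$ with $[B_w,+1]$ using the weighted character map. Where you diverge is in your primary injectivity argument, and there is a real gap there. You claim $\HC(W,S,L)$ and $[\DC]_\si$ are free $\Zvv$-modules of equal rank $|W|$ and that a surjection between free modules of equal \emph{finite} rank over a commutative ring is automatically injective. But the theorem carries no finiteness hypothesis on $W$: the paper insists the construction covers affine Weyl groups (Examples \ref{affine inclusions} and \ref{affine shrinking}), universal Coxeter groups (Example \ref{universal}), and $k$-fold products of arbitrary Coxeter groups (Example \ref{many copies}), all infinite. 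For infinite-rank free modules surjectivity does not imply injectivity (the left shift on a countable direct sum of copies of $\Zvv$ is surjective but not injective), so the determinant-trick step you are leaning on is simply unavailable at the generality being claimed.

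The alternative you mention in passing is the correct route and is exactly what the paper does: the corollary before the theorem gives a unipotent upper-triangular (in Bruhat order) change of basis between certain monomials in the $[B_s,+1]$ and the basis $\{[B_w,+1]\}_{w\in W}$, so $\Phi$ carries the KL basis of $\HC(W,S,L)$ unitriangularly onto a basis of $[\DC]_\si$ and is therefore bijective with no finiteness assumption. You should promote that to your main argument rather than leave it as an afterthought. One further caution: you assert $\mathrm{ch}^\si([B_w,+1])=b_w$ as though it falls out of the definition of the weighted character map, but establishing it is precisely the content of Claim \ref{wtcharmap} combined with self-duality of $B_w$ and the Soergel Hom formula, and the paper is explicit that it only sketches this (a full proof needing localization and light-leaf machinery). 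Flag that as the delicate point rather than treating it as free.
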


\begin{proof} The proof will take place throughout the remainder of this chapter, but we outline it here.

First we show the existence of an algebra map $\psi \co \HC(W,S,L) \to [\DC]_\si$, sending $b_s \mapsto [B_s,+1]$. To show this, we must check the relations of the presentation given in
section \ref{subsec-KLpresentation}. This is really the crux of the proof, and follows from the explorations into decomposition of the following sections.

The remainder of the proof is an abstract and mostly tautological unwinding of the consequences of the categorification results in section \ref{subsec-catfnthms}. By the previous
corollary, there is a unipotent upper triangular change of basis matrix between certain monomials in $[B_s,+1]$, and $\{[B_w,+1]\}$. Since $[B_s,+1]$ generates $[\DC]_\si$ as a ring, so
the map $\psi$ is surjective. As another consequence, the change of basis matrix between $\psi(b_w)$ and $[B_w,+1]$ must also be unipotent upper triangular, and thus $\psi$ is injective.

In the final section we define the weighted character map, and claim that it expresses $[M,\phi]$ in terms of the standard basis of $\HC(W,S,L)$. Using this, one can show that
$[B_w,+1]$ satisfies the defining properties of the KL basis element $b_w$. However, we only sketch this result, as our favorite proof requires a much more in-depth discussion of
localization and light leaves in order to state rigorously. \end{proof}

Let $G$ be abelian, and suppose that $\si \in G$ does not act transitively on each orbit. It is easy to find an element $x \in W' \setminus W$ fixed by $\si$, and the direct sum $B_{Gx}$
will contribute to the $\si$-weighted Grothendieck group of $\DC$. Therefore, it is not possible that $[\DC]_\si$ is the Hecke algebra with unequal parameters, because it has a basis
strictly larger than $W$. Nonetheless, when we perform our decompositions below, we will not assume that $\si$ acts transitively on each orbit. We will still be able to compute certain
formulas in the Grothendieck ring, though this will not be enough to determine the entire ring structure. Some non-abelian examples are also computable in this partial sense.

For standard Coxeter embeddings, the computations below are done mostly in full. For the embedding of $A_1$ in $I_2(m)$, or $B_2$ in $A_4$, the computations are not hard but use the
\emph{thick calculus} of \cite{EThick} and \cite{ECathedral}. In order to keep this paper shorter, we just sketch these computations. We have not attempted the computation for the
embedding of $I_2(8)$ inside $F_4$.

\subsection{Checking the quadratic relation I}
\label{subsec-quadraticI}

We begin by checking the quadratic relation \eqref{eq:bb}. This calculation will provide an excellent illustration of the methods involved.

First, we remind the reader why \eqref{eq:bb} holds in the split case. Suppose that $s$ is a simple reflection. Then \begin{equation} \label{eq:BB} B_s B_s \cong B_s(1) \oplus B_s(-1)
\end{equation} in $\DC$. To prove this, one constructs inclusion maps $i_1, i_2 \co B_s \to B_s B_s$ of degrees $+1, -1$ respectively, and projection maps $p_1, p_2 \co B_s B_s \to B_s$
of degrees $-1, +1$. These maps are pictured below, and they satisfy: \begin{subequations} \begin{equation} 1_{B_s B_s} = i_1 p_1 + i_2 p_2, \end{equation} \begin{equation} 1_{B_s} =
p_1 i_1 = p_2 i_2, \end{equation} \begin{equation} 0 = p_1 i_2 = p_2 i_1. \end{equation} \end{subequations}

\begin{center} ${
\labellist
\small\hair 2pt
 \pinlabel {$i_1 = \frac{1}{2}$} [ ] at -20 64
 \pinlabel {$i_2 =$} [ ] at 58 64
 \pinlabel {$p_1 =$} [ ] at -20 18
 \pinlabel {$p_2 = \frac{1}{2}$} [ ] at 58 18
\endlabellist
\centering
\ig{1}{inclusionsprojections}
}$\end{center}

As an extended warmup, let us consider the diagonal quasi-split embedding of $A_1$ into $A_1 \times A_1$, with $G = \ZM/2\ZM$, so that $\si$ switches the two colors in $A_1 \times A_1$.
The reader should keep Example \ref{compare A1A1} in mind. Let $S' = \{t,u\}$ and let $S = \{s\}$, with the embedding $s=tu$. Viewing $s$ as an element of $W'$ one has $B_s = B_t B_u$.
By the pulling apart relation (see Remark \ref{remark distant}), one has $B_t B_u \cong B_u B_t$. Therefore, in $\DC$ one has \begin{equation} B_s B_s \cong B_t B_u B_t B_u \cong B_t
B_t B_u B_u \cong B_s(2) \oplus {\color{red} B_s \oplus B_s} \oplus B_s(-2).\end{equation} Thus we can write the identity of $B_t B_u B_t B_u$ as a sum of four orthogonal idempotents,
as below. These factor with inclusion and projection maps $i_k, p_k$, for $1 \le k \le 4$.

\begin{center} ${
\labellist
\small\hair 2pt
 \pinlabel {$i_1 = $} [ ] at -20 231
 \pinlabel {$p_1 = $} [ ] at -20 72
 \pinlabel {$i_2 = \frac{1}{2\sqrt{2}}$} [ ] at 78 281
 \pinlabel {$+ \frac{1}{2\sqrt{2}}$} [ ] at 182 283
 \pinlabel {$p_2 = \frac{1}{2\sqrt{2}}$} [ ] at 77 120
 \pinlabel {$+ \frac{1}{2\sqrt{2}}$} [ ] at 180 120
 \pinlabel {$i_3 = \frac{1}{2\sqrt{2}}$} [ ] at 76 189
 \pinlabel {$- \frac{1}{2\sqrt{2}}$} [ ] at 181 185
 \pinlabel {$p_3 = - \frac{1}{2\sqrt{2}}$} [ ] at 74 24
 \pinlabel {$+ \frac{1}{2\sqrt{2}}$} [ ] at 181 23
 \pinlabel {$i_4 = $} [ ] at 281 232
 \pinlabel {$p_4 = $} [ ] at 278 69
\endlabellist
\centering
\ig{1}{inclusionsprojections2}
}
$ \end{center}

Now we ask what happens in the equivariant category $\DC_G$. The equivariant structure map $\phi_\si$ of $(B_t B_u,+1)$ is the 4-valent vertex $\ig{1}{4valent}$. Then $(B_t B_u, +1) \ot
(B_t B_u, +1)$ has equivariant structure map $\ig{1}{two4valent}$. Using distant-isotopy one can observe that $$\ig{1}{inclusionsquare}.$$ Symbolically, this says that $\si(p_4) \circ \phi
= \phi \circ p_4$, which is precisely the condition that $p_4$ is a map from $(B_t B_u,+1) \ot (B_t B_u,+1)$ to $(B_t B_u,+1)$. However, the reader can observe that $p_3$ and $i_3$ are
maps between $(B_t B_u,+1) \ot (B_t B_u, +1)$ and $(B_t B_u,-1)$, as $\si(p_3) \circ \phi = - \phi \circ p_3$.

Together, these maps imply the direct sum decomposition \begin{align} \label{eq:A1A1equivdecomp} (B_t B_u, +1) & \ot (B_t B_u,+1) \cong \\ \nonumber & (B_t B_u (2),+1) \oplus
{\color{red}(B_t B_u(0),+1) \oplus (B_t B_u(0),-1)} \oplus (B_t B_u(-2),+1) \end{align} in $\DC_G$. In the weighted Grothendieck group $[\DC]_\si$, the two terms with no degree shift
cancel, leaving $[B_s,+1]^2 = (v^2 + v^{-2})[B_s,+1]$ as desired.

The decomposition \eqref{eq:A1A1equivdecomp} holds true for any $G$ acting transitively on $A_1 \times A_1$, not just when $G = \ZM/2\ZM$. Let $G$ be a group acting transitively on $A_1
\times A_1$, and let $K$ be the kernel of the action, so that $G/K \cong \ZM/2\ZM$. Let $-1$ denote the representation of $G$ which factors through the sign representation of $G/K$. Then
$G$ is the stabilizer of the element $s = tu \in W'$, and it is clear that \eqref{eq:A1A1equivdecomp} holds, interpreting $+1$ and $-1$ as representations of $G$.

When $G$ is abelian, one can compute directly in the weighted Grothendieck group without needing to construct the specific inclusion and projection maps, instead using Corollary
\ref{cor:trace}. One can examine the graded vector space $V^k$ of (linear combinations of) morphisms $B_t B_u B_t B_u \to B_t B_u$ in degree $k$, modulo the kernel of the LIP. The $G$ action of \eqref{Gxaction} is given by:

\begin{equation} \label{eq:actionexample} {
\labellist
\small\hair 2pt
 \pinlabel {$f$} [ ] at 27 52
 \pinlabel {$\mapsto$} [ ] at 73 52
 \pinlabel {$\si(f)$} [ ] at 120 52
\endlabellist
\centering
\ig{1}{actionexample}
}\end{equation}

Computing the trace of $\si$ will give multiplicities in the Grothendieck group $[\DC]_\si$. For instance, $V^{-2}$ one-dimensional, spanned by $p_4$. Since $\si$ acts by the identity on
this space, $[B_t B_u(2),+1]$ appears with multiplicity 1. On the other hand, $V^0$ is spanned by the two pictures in either $p_2$ or $p_3$, and these two pictures are switched by $\si$,
so the trace is zero. Therefore $[B_t B_u(0),+1]$ appears with multiplicity zero.

\begin{remark} The actual space of degree $0$ morphisms is larger than two-dimensional, also allowing for positive degree polynomials to be placed next to $i_1$. However, all such maps
are in the kernel of the LIP, as is any map in the image of the left (or right) action of positive degree polynomials. \end{remark}

\subsection{Checking the quadratic relation II}
\label{subsec-quadraticII}

More generally, $A_1$ can embed in a quasi-split embedding as the longest element of $A_1^k$, or as the longest element of $I_2(m)^k$.

First we treat the case of $A_1^k$, where $k = L(s)$. Let $s \in S$ be the product of the commuting reflections $I(s)=\{t_1,\ldots, t_{L(s)}\} \subset S'$. One has $B_s = B_{t_1} \cdots
B_{t_{L(s)}}$. Our goal is to decompose $(B_s, +1) \ot (B_s,+1)$ into indecomposables in $\DC_G$. Because of the reductions in section \ref{subsec-reductions}, we may work within the
subcategory of $\DC$ which only involves the colors in $I(s)$, and we may assume that $G$ acts on $I(s)$ faithfully, so that we identify $G$ with a subgroup of the symmetric group
$S_{L(s)}$. We can make this assumption even when $G$ is not abelian; however, translating this decomposition into a statement about the weighted Grothendieck ring will be difficult
unless $G$ is abelian.

In $\DC$, the only summands of $B_s B_s$ are grading shifts of $B_s$. We already know what the graded vector space $V^{\bullet}(B_s B_s, B_s)$ of projection maps is: it has a basis given
by maps of the following form. \igc{1}{biginclusion} There is one such map for each subset $J \subset I(s)$ indicating which colors of barbells appear, and the map has degree
$-L(s)+2|J|$. The group $G$ acts on this basis precisely as it acts on the subsets of $I(s)$, so that we need only decompose this subset representation into irreducibles for $G$.

If $G$ is abelian, then computing the product in the $\si$-weighted Grothendieck group is easy. One has \begin{equation} [B_s,+1]^2 = \sum_{i = 0}^{L(s)} v^{-L(s) + 2i} c_i [B_s,+1],
\end{equation} where $c_i$ is the number of subsets $J \subset I(s)$ which are fixed by $\si$, with $|J|=i$. Thinking of $\si$ as an element in the symmetric group $S_{L(s)}$, the cycle
decomposition of $\si$ will determine these multiplicities $c_i$. For instance, if $L(s)=3$ and $\si$ is a 2-cycle, then $[B_s,+1]^2 = (v^3 + v^1 + v^{-1} + v^{-3}) [B_s,+1]$. Similarly, if
$L(s)=4$ and $\si$ is a product of disjoint 2-cycles then $[B_s,+1]^2 = (v^4 + 2 + v^{-4}) [B_s,+1]$. In particular, one has \begin{equation} [B_s,+1]^2 = (v^{L(s)} + v^{-L(s)}) [B_s,+1]
\end{equation} if and only if $\si$ is a cycle with no fixed points. This was the quadratic relation in $\HC(W,S,L)$.

When $\si$ is a cycle, only the idempotents where each color behaved the same will contribute to the weighted Grothendieck group. \emph{Mixed idempotents}, where some colors behave
in one way, and others in another way, may appear as summands in $\DC_G$ but will not contribute to the weighted Grothendieck group. This is because mixed idempotents are permuted by
$\si$ with no fixed points.

Unraveling our reductions, one has the following statement: let $G$ be abelian, acting (possibly non-faithfully) on $(W',S',\ell)$, and choose $\si \in G$. Suppose the $G$-split
embedding is standard. Then the quadratic relation in $\HC(W,S,L)$ holds for $[B_s,+1]$ in $[\DC]_\si$ if and only if $\si$ acts transitively on the orbit $I(s)$.

Now we sketch what happens for the embedding of $W = A_1$ into $W' = I_2(m)$, sending $s$ to the longest element $w_0$ of the dihedral group. Let $\si \in \ZM / 2 \ZM$ swap the two
colors of the dihedral group. We know that $B_{w_0} \ot B_{w_0} \cong [W'] B_{w_0}$, where again $[W']$ denotes the balanced Poincar\'e polynomial of the dihedral group. A choice of
inclusion and projection maps can be made by choosing dual bases for the Frobenius extension $R^{W'} \subset R$, relative to the Demazure operator $\pa_{w_0}$. For more information on this decomposition, see the discussion of thick calculus in \cite[\S 6.3]{ECathedral}, and perhaps also the diagrammatic discussion surrounding \cite[Equation (3.43)]{EThick}.

Finding dual bases is an exercise in Schubert calculus; observing that $\si$ is traceless on all but the highest and lowest degrees, where it acts as the identity, is also an exercise.
Thus one has \begin{equation} [B_s,+1]^2 = (v^m + v^{-m}) [B_s,+1] \end{equation}
inside the $\si$-weighted Grothendieck group.

The embedding of $A_1$ into $I_2(m)^k$ merely combines these different observations, but is no more difficult, and we leave it to the reader.

\subsection{Type $A_1 \times A_1$ inside type $A_1^{\times k} \times A_1^{\times l}$}
\label{subsec-A1A1inA1A1}

Now we begin checking the dihedral relations, as enumerated in Proposition \eqref{prop: quasi-split}. We wish to show that the expression given for $b_{w_0} \in \HC(W,S,L)$ in terms of
the KL generators $b_s$ is categorified by the appropriate decomposition of certain tensor products of $(B_s,+1)$.

We begin with the easiest case, the embedding of $A_1 \times A_1$ inside $A_1^{\times k} \times A_1^{\times l}$. The reader may consult example \ref{compare A1^k A1^l} and
\eqref{eq:dihedral m=2}. Let $S = \{s,t\}$. We seek to show that \begin{equation} (B_s,+1) \ot (B_t,+1) \cong (B_{st},+1), \end{equation} regardless of the group $G$. Certainly, one
already knows that $B_s B_t \cong B_{st}$ in $\DC$.

For any $g \in G$, the corresponding equivariant structure map $\phi_g \co B_s \to g(B_s)$ inside $(B_s,+1)$ is just a rex move, as is the map $\phi_g \co B_t \to g(B_t)$. Their tensor product is also a rex move. Therefore, the induced equivariant structure on the unique summand $B_{st}$ is $(B_{st},+1)$.

As an example, let $R(s)=\{u_1,\ldots,u_k\}$ and $R(t) = \{v_1,\ldots,v_l\}$, and suppose that $g$ acts on each orbit cyclically and transitively. We may assume that $g$ sends $u_i$ to
$u_{i+1}$ modulo $k$, and sends $v_i$ to $v_{i+1}$ modulo $l$. Choosing an isomorphism $B_s \cong B_{u_1} B_{u_2} \cdots B_{u_k}$, the equivariant structure map $\phi_g$ for $(B_s,+1)$
is \igc{1}{equivariantA1k} The map for $(B_t,+1)$ is similar. Therefore, the equivariant structure map on $B_s B_t$ induced by $(B_s,+1) \ot (B_t,+1)$ is \igc{1}{equivariantA1kA1l} This
is a rex move, and therefore it agrees with the standard equivariant structure map for $B_{st}$.

\subsection{Type $B_2$ inside type $A_3$}
\label{subsec-B2inA3}

Now we consider the embedding of $B_2$ inside $A_3$. The reader may consult Example \ref{compare A3B2}, and the $k=1$ case of \eqref{eq:dihedral m=4}. For purposes of computing
decompositions we can assume that $G$ acts on $A_3$ faithfully with the appropriate orbits, so that $G = \ZM/2\ZM$. Let $\si \in G$ be the generator.

Let $S=\{s,t\}$ and $S' = \{x,y,z\}$, and let $\si \in G$ switch $x$ and $z$. Let $s=y$ and $t=xz$. Inside $\DC$ one has the decomposition \begin{equation} B_y B_x B_z B_y B_x B_z \cong
B_{yxzyxz} \oplus {\color{blue}(B_{yxyz} \oplus B_{yzyx})} \oplus B_{yxz}(1) \oplus B_{yxz}(-1), \end{equation} as can be easily computed using the Hecke algebra $\HC(W',S',\ell)$. It remains to
compute what happens in $\DC_G$.

The equivariant structure map of $\si$ on $(B_s,+1) \ot (B_t,+1) \ot (B_s,+1) \ot (B_t,+1)$ is \igc{1}{equivariantA3} As discussed previously, this induces the equivariant structure
$(B_{stst},+1)$ on the top summand. It induces the only possible equivariant structure on the summand $B_{yxyz} \oplus B_{yzyx}$. We wish to show that the equivariant structures induced
on $B_{st}(1)$ and $B_{st}(-1)$ are the standard ones.

The projection to $B_{st}(-1)$ is (up to scalar) the map \igc{1}{A3B2project}
and its possible inclusions have the form
{
\labellist
\small\hair 2pt
 \pinlabel {$+ f$} [ ] at 82 22
\endlabellist
\centering
\igc{1}{A3B2include}
}
for some linear polynomial $f$. Placing these maps upside down one obtains the possible projections and inclusions for $B_{st}(1)$, except with some polynomial $g$ instead of $f$. In order for the corresponding idempotents to be
orthogonal, the required condition is that $f + g = -\pa_y(\al_x + \al_z)$. For instance, we may let $f=0$ for the inclusion of $B_{st}(-1)$, and
$g=-\pa_y(\al_x+\al_z)$ for the projection to $B_{st}(1)$. Both polynomials are invariant under the action of $\si$. It is easy to see that these idempotents are acted on trivially by $\si$, in the sense of \eqref{eq:actionexample}.

\begin{remark} One need not actually find the two orthogonal idempotents to perform this computation. After all, the terms with polynomials above are in the kernel of the LIP, and can
effectively be ignored. Also, by duality, one only need consider the trace of $\si$ on projection maps of degree $-1$, and the trace on projection maps of degree $+1$ will be determined.
\end{remark}

Therefore, in $\DC_G$ one has the decomposition \begin{align} \label{eq:decompB2inA3} (B_s,+1) \ot (B_t,+1) & \ot (B_s,+1) \ot (B_t,+1) \cong \\ \nonumber & (B_{stst},+1) \oplus
{\color{blue}(B_{yxyz} \oplus B_{yzyx},\phi)} \oplus (B_{st}(1),+1) \oplus (B_{st}(-1),+1). \end{align} This is true for all $G$, not just $\ZM/2\ZM$. When $G$ is abelian and $\si$ descends to a
generator of $\ZM/2\ZM$, this categorifies \eqref{eq:dihedral m=4 alt} in the $\si$-weighted Grothendieck group.

\subsection{Type $B_2$ inside type $A_4$}
\label{subsec-B2inA4}

The reader should consult Example \ref{compareA4B2}, and we continue to use its notation. Let $S = \{t,u\}$ have type $B_2$ and $S' = \{s_1,s_2,s_3,s_4\}$ have type $A_4$, and set $t =
s_1 s_4$ and $u = s_2 s_3 s_2$. We already know from Example \ref{compareA4B2} that \begin{align} B_t & \ot B_u \ot B_t \ot B_u \cong B_{tutu} \oplus B_{tu}(1) \oplus B_{tu}(-1) \\
\nonumber & \oplus {\color{blue} B_{s_2 s_1 w_{234}} \oplus B_{s_3 s_4 w_{123}} \oplus B_{s_1 w_{234}}(1) \oplus B_{s_1 w_{234}}(-1) \oplus B_{s_4 w_{123}} (1) \oplus B_{s_4
w_{123}}(-1)}. \end{align} If $\si$ is the usual automorphism of $S'$, then the blue terms have a unique equivariant structure for $\si$, and will not contribute to the $\si$-weighted
Grothendieck group when $\si$ lives in an abelian group $G$. As in the previous section, we need only check that the projection map from $B_t B_u B_t B_u$ to $B_{tu}(-1)$ is acted on
trivially by $\si$.

In the thick calculus of \cite{EThick}, this projection map can be drawn as follows.
\igc{1}{B2inA4proof}
The thick trivalent vertex has degree $-3$, so the overall diagram has degree $-1$. It is evidently fixed by $\si$.

\subsection{Diagonal embeddings}
\label{subsec-diagonal}

In section \ref{subsec-B2inA3} we treated the quasi-split embedding of $B_2$ inside $A_3$, though we did not treat the embedding of $B_2$ inside $A_3^{\times k}$. Similarly, we have not
yet treated the case of the diagonal embedding of $I_2(m)$ inside $I_2(m)^{\times k}$. In this section, we demonstrate by example that such multiplicities do not further complicate the
computations in the $\si$-weighted Grothendieck group, when $G$ is abelian and $\si$ acts transitively on the orbits; the essential difficulties which arise have already been addressed in
section \ref{subsec-A1A1inA1A1}. The reader should be able to formulate the general argument from this example.

Consider the embedding of $B_2$ inside $A_3^{\times 3}$. Let $S=\{s,t\}$, with $L(s)=3$ and $L(t)=6$. Color the three elements of $R(s)$ by different shades of blue: light, medium and
dark. Do the same for the elements of $R(t)$, and different shades of red and green. The dark colors commute with the light and medium colors, and so forth. The equivariant structure map
on $(B_s,+1) \ot (B_t,+1) \ot (B_s,+1) \ot (B_t,+1)$ is \igc{1}{equivariantA3k}

Given any Soergel diagram for $A_3$, one can obtain a Soergel diagram for $A_3^{\times 3}$ by replacing each blue strand with three shades of blue strands, and so forth. This operation
will triple the degree of the map, but will not change how the maps compose thanks to the principle of distant-isotopy (see Remark \ref{remark distant}). Thus we still have idempotents
inside $(B_s,+1) \ot (B_t,+1) \ot (B_s,+1) \ot (B_t,+1)$ for each summand mentioned in \eqref{eq:decompB2inA3}, except that the degree shifts are tripled.

However, there will also be a host of mixed idempotents. For example, one may project to $B_{st}(1)$ in the dark shade, while projecting to $B_{st}(-1)$ in the medium and light shades.
Here is such a projection map: \igc{1}{weirdexample} Just as in section \ref{subsec-A1A1inA1A1}, such mixed idempotents are permuted by $\si$ with no fixed points, so that they do not
contribute to the $\si$-weighted Grothendieck group.

Computing the full decomposition of $(B_s,+1) \ot (B_t,+1) \ot (B_s,+1) \ot (B_t,+1)$ in this and more general situations seems to be a tedious problem.

\subsection{Type $G_2$ inside type $D_4$}
\label{subsec-G2inD4}

Finally, we treat the embedding of $G_2$ inside $D_4$. The embedding of $G_2$ inside $D_4^{\times k}$ is treated in similar fashion to the preceding section. The reader may consult
Example \ref{compare D4G2}, and the $k=1$ case of \eqref{eq:dihedral m=6}. Let $S=\{s,t\}$ and $S' = \{u_1,u_2,u_3,v\}$. As above, we assume that $G$ acts faithfully on $D_4$, thus
embedding into the symmetric group on $\{u_1,u_2,u_3\}$. Since $G$ acts transitively, it is either $\ZM/3\ZM$ or $S_3$. Either way there is some element $\si \in G$ which cyclically
permutes the $u_i$. Let $s=v$ and $t=u_1u_2u_3$.

To shorten our notation, we denote the bimodule $B_s B_t B_s B_t B_s B_t$ as $X$. In this case, the indecomposables $B_s$ and $B_t$ are no more than products of the generators, so that
$X$ corresponds to the sequence $\un{v u_1 u_2 u_3 v u_1 u_2 u_3 v u_1 u_2 u_3}$, an object of $\DC$, and not some more complicated object in the Karoubi envelope. We write $(X,+1)$ for
the equivariant object induced as a tensor product. We also give names to several of the representations of $G$ or stabilizers inside $G$. Let $V_{\std}$ denote the standard
3-dimensional representation of $S_3$, and let $V_{\reg}$ denote the 6-dimensional regular representation; when $G = \ZM/3\ZM$ we also let these represent the restriction of these
$S_3$-representations to the subgroup $G$. Let $+1$ denote the trivial representation of the stabilizer of $u_1$ in $G$, whether this stabilizer be $S_2$ or the trivial group.

We already know the decomposition of $X$ in $\DC$ for $D_4$, given on the decategorified level in Example \ref{compare D4G2}. We reprint the equality here.

\begin{align*} b_s b_t b_s b_t b_s b_t = b_{ststst} + {\color{blue} (b_{u_1 u_2 stst} + b_{u_1 u_3 stst} + b_{u_2 u_3 stst})} + {\color{blue} (v+v^{-1}) (b_{u_1 stst} + b_{u_2 stst} +
b_{u_3 stst})} \\ + (2v^2 + {\color{red} 6} + 2v^{-2}) b_{stst} + {\color{blue} (v+v^{-1}) (b_{vu_1 u_2 st} + b_{vu_1 u_3 st} + b_{v u_2 u_3 st})}\\ + (v^{4} + {\color{red} 6v^2} +
(3+{\color{red} 3} + {\color{green} 6}) + {\color{red} 6v^{-2}} + v^{-4}) b_{st}. \end{align*}

Terms which are blue correspond to summands whose orbits have size $3$; there is a unique equivariant structure when $G = \ZM/3\ZM$, and two possible equivariant structures when $G =
S_3$. Such terms can not possibly contribute the $\si$-weighted Grothendieck group when $G$ is abelian, because the indecomposables are not $\si$-fixed. We seek to show that the terms which
are black have the standard equivariant structure, and the terms which are red correspond to summands with equivariant structure $V_{\std}$, and the green term corresponds to a summand
with equivariant structure $V_{\reg}$. The red and green terms would then cancel in the $\si$-weighted Grothendieck group when $G$ is abelian, since the trace of $\si$ on $V_{\std}$ or
$V_{\reg}$ is zero.

\begin{prop} The following direct sum decomposition holds in $\DC_G$. We write polynomials to indicate direct sums of grading shifts.

\begin{align} (&X,+1)  \cong (B_{ststst},+1) \oplus {\color{blue} (B_{u_1 u_2 stst} \oplus B_{u_1 u_3 stst} \oplus B_{u_2 u_3 stst},+1)} \\ \nonumber & \oplus {\color{blue} (v+v^{-1})
(B_{u_1 stst} \oplus B_{u_2 stst} \oplus B_{u_3 stst},+1)} \oplus (2v^2 + 2v^{-2}) (B_{stst},+1) \oplus {\color{red} 2 (B_{stst} \sqot V_{\std})} \\ \nonumber & \oplus {\color{blue}
(v+v^{-1}) (B_{vu_1 u_2 st} + B_{vu_1 u_3 st} + B_{v u_2 u_3 st},+1)} \\ \nonumber & \oplus (v^{4} + v^{-4}) (B_{st},+1) \oplus {\color{red} (2v^2+2v^{-2}) (B_{st} \sqot V_{\std})}
\oplus 3 (B_{st},+1) \oplus {\color{red} (B_{st} \sqot V_{\std})} \oplus {\color{green} (B_{st} \sqot V_{\reg})}. \end{align}

\end{prop}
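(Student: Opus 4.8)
The plan is to bootstrap the asserted decomposition in $\DC_G$ from the already-known decomposition of $X = B_s B_t B_s B_t B_s B_t$ in $\DC$ (for the $D_4$ realization), recorded decategorified in Example \ref{compare D4G2}. By Claim \ref{claim:multiplicity} it is enough to determine, for each indecomposable summand $B_m$ of $X$ that is fixed by $\si$, the representation of the stabilizer $G_m$ on the multiplicity spaces $V^k(X,B_m)$; the summands $B_m$ with $m\notin W$ lie in $G$-orbits of size $3$ and require only bookkeeping. Throughout, $G$ permutes the colours $u_1,u_2,u_3$ and fixes $v=s$, and since $t=u_1u_2u_3$ every element of $W$ fixes $ststst$, $stst$ and $st$, so $G_m=G$ for those three $m$. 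I would treat $G=S_3$; the case $G=\ZM/3\ZM$ then follows by restricting representations, exactly as in the statement. Recall finally that $\Kar(\DC)$ is mixed (Soergel conjecture, \S\ref{subsec-catfnthms}), so $\End^0(B_m)=\CM$ and \eqref{Gxaction} genuinely defines a representation; hence $\DC_G$ is mixed and $(X,+1)$ decomposes uniquely into objects $(B_{Gm}\sqot V)$.

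First I would fix the reduced word $\un{X}=\un{v\,u_1 u_2 u_3\,v\,u_1 u_2 u_3\,v\,u_1 u_2 u_3}$ for $X$ and observe that the equivariant structure $\phi$ on $(X,+1)$ is the tensor product of the standard rex-move structures on $(B_s,+1)$ and $(B_t,+1)$, hence is itself a rex move. As in the claim following Theorem \ref{thm:main}, $\phi$ restricts to the standard structure on the unique top summand, yielding $(B_{ststst},+1)$; the same remark, applied to the orbit sums, shows that each ``blue'' summand $B_m$ (with $m$ ranging over the length-$3$ $G$-orbits of $u_iu_j stst$, $u_i stst$, $vu_iu_j st$) assembles into $B_{Gm}$ with the standard equivariant structure $(B_{Gm},+1)$. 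These are precisely the summands whose indecomposables are not $\si$-fixed, and they account for all the blue terms; no further work is needed on them.

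The core is to compute the $G$-modules $V^k(X,B_m)$ for $m\in\{stst,st\}$, with $G$ acting via \eqref{Gxaction}. I would fix reduced words $\un{stst},\un{st}$ in $W'$ and use Libedinsky's light leaves (\S\ref{subsec-catfnthms}) to write down, in each degree $k$, an explicit basis of $\Hom(B_{\un X},B_{\un m})$ modulo lower terms, i.e.\ of $V^k(X,B_m)$; choosing these light leaves compatibly with the colour symmetry (handle $v$ first, then the three colours $u_i$ symmetrically), the colour-permutation action of $G$ permutes this basis up to the $G$-stable kernel of the LIP, so one only has to recognize the resulting permutation module. I expect this to give $V^{\pm 2}(X,B_{stst})\cong(+1)^{\oplus 2}$, $V^{0}(X,B_{stst})\cong V_{\std}^{\oplus 2}$, and $V^{\pm 4}(X,B_{st})\cong +1$, $V^{\pm 2}(X,B_{st})\cong V_{\std}^{\oplus 2}$, $V^{0}(X,B_{st})\cong(+1)^{\oplus 3}\oplus V_{\std}\oplus V_{\reg}$. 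As a concrete alternative, one can instead exhibit the corresponding families of near-idempotents directly in the thick calculus of \cite{EThick, ECathedral}, as was done for $B_2\subset A_3$ and $B_2\subset A_4$ in \S\ref{subsec-B2inA3}--\S\ref{subsec-B2inA4}, and read the $\si$-action off the diagrams: the $V_{\std}$-summands come in $\si$-orbits of size $3$ of near-idempotents distinguished by a ``featured'' colour $u_i$, while the $V_{\reg}$-summand lives in a free $S_3$-orbit of size $6$. Feeding these $G$-modules into Claim \ref{claim:multiplicity} together with the plain multiplicities from Example \ref{compare D4G2} produces the stated decomposition.

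The main obstacle is the degree-$0$ space $V^{0}(X,B_{st})$, which is $12$-dimensional: one must separate the $3$ genuinely $\si$-invariant light leaves from the remaining $9$ and, crucially, verify that these $9$ form $V_{\std}\oplus V_{\reg}$ (one size-$3$ orbit together with one \emph{free} size-$6$ $S_3$-orbit) rather than, say, three copies of $V_{\std}$. This is a delicate but purely mechanical count of degree-$0$ light leaves $B_{\un X}\to B_{\un{st}}$ modulo the kernel of the LIP while tracking the action of the colour permutations; the analogous counts at degrees $\pm 2,\pm 4$ and all the counts for $B_{stst}$ are easier and should go through routinely. Once every $V^k(X,B_m)$ has been identified as a $G$-module, the proposition is immediate from Claim \ref{claim:multiplicity}.
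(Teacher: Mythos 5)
Your approach is essentially the paper's: start from the $\DC$-decomposition recorded in Example \ref{compare D4G2}, compute the $G$-modules $V^k(X,B_m)$ on multiplicity spaces by enumerating light leaves modulo the LIP kernel, and feed these into Claim \ref{claim:multiplicity}. Your anticipated answers for $V^k(X,B_{stst})$ and $V^k(X,B_{st})$ agree with the paper's, and you rightly single out the degree-$0$ space $V^0(X,B_{st})$ as the delicate point.

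There is, however, a concrete gap in your treatment of the blue summands. You argue that because their indecomposables ``are not $\si$-fixed,'' no further work is needed, and you appeal to ``the same remark'' to conclude they assemble into $(B_{Gm},+1)$. Neither justification holds. The claim after Theorem \ref{thm:main} applies only to the unique top summand of a tensor product of generators, not to lower summands. And ``not $\si$-fixed'' only implies the blue terms vanish in the $\si$-weighted Grothendieck group; it says nothing about the actual equivariant structure, which the proposition asserts. Since you chose to work with $G=S_3$, each size-$3$ orbit has stabilizer of order $2$, the relevant multiplicity space is one-dimensional, and a priori the stabilizer could act by either $+1$ or $-1$. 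The paper resolves this by an explicit diagrammatic check: for the degree-$(-1)$ light leaves landing in $B_{u_1 stst}$ and in $B_{vu_1u_2 st}$, it uses the Zamolodzhikov relation \eqref{A3} together with \eqref{assoc2} to see that the stabilizing transposition acts trivially. You would need to carry out an analogous verification; without it, the identification of the blue summands as $(B_{Gm},+1)$ rather than a sign-twist is unproven.
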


Let us discuss the algorithm by which we compute the spaces $V^k(X,B_w)$, for various $k$ and $w$, in order to determine the action of the stabilizer $G_w$ on them. By duality, we need
only investigate the case $k \le 0$. The algorithm will proceed degreewise, from the most negative $k$ up to zero, for a reason which will become clear soon.

Clearly $\be \co \Hom^k(X,\un{w}) \to V^k(X,B_w)$ is a surjection. First, we find a set of linearly independent elements of $\Hom^k(X,\un{w})$, for a reduced expression of $w$,
which descend to a spanning set of $V^k(X,B_w)$. The light leaves of degree exactly $k$ serve this purpose. This is because double leaves form a basis of $\Hom(X,\un{w})$ as a right
$R$-module; any map in the image of $R_+$ acting on the right will be in the kernel of $\be$, and any double leaf which is not a light leaf followed by the identity map will factor through
lower terms, and thus be in the kernel as well. Thus we actually think of $\be$ as a map from $Y^k(X,w)$, the span of light leaves of degree $k$, to $V^k(X,B_w)$.

Next, we compute the kernel of $\be$, i.e. the kernel of the Local Intersection Pairing. It is an extrapolation of the Soergel conjecture that any morphism of non-positive degree must have
a ``reason" to be in the kernel; that the kernel is generated by maps from $X$ to $B_x$ of even more negative degree $k' < k$, followed by a map $B_x \to B_w$ of positive degree. We have
already restricted to light leaves to ignore lower terms, so that we may assume $x > w$. This map from $X$ to $B_x$ of degree $k'$ indicates that $B_x(k')$ has already occurred as a
summand of $X$ (i.e. we have already computed $V^{k'}(X,B_x)$ in some sense).

Working with objects in $\DC$, rather than indecomposables in its Karoubi envelope, we find the kernel of $\be$ by
looking at light leaves maps from $X$ to $\un{x}$ of degree $<k$, and composing with a map $\un{x} \to \un{w}$ of positive degree which is itself in the kernel of the LIP (i.e. it
has nothing of negative degree to pair against). This process will become clear in practice below.

Consider $V^{-4}(X, B_{st})$. The space of light leaves $Y^{-4}(X,st)$ is one-dimensional, spanned by \igc{1}{G2inD4proof5} There is no kernel for $\be$, because
there are no maps of degree $<-4$ to any $B_w$ with $w > st$; if there were such a map, it must lead to a summand, and there are no such summands. Thus $Y^{-4}(X,B_{st}) = V^{-4}(X,B_{st})$, or in other words, the map pictured above is a split morphism, a projection. This diagram is fixed by the action of
$G$, so that $(B_{st}(-4),+1) \oplus (B_{st}(+4),+1)$ appear as equivariant summands of $X$. This explains the black term $(v^4 + v^{-4})b_{st}$.

Consider $V^{-2}(X,B_{stst})$. The space $Y^{-2}(X,stst)$ is spanned by the two pictures below. \begin{equation} \label{eq:G2inD4proof1} \ig{1}{G2inD4proof1} \end{equation} There is no
kernel for $\be$, because there are no summands $B_x(k)$ with $k < -2$ and $x > stst$. Thus the pictures below provide two orthogonal projection morphisms. The action of $G$ fixes both diagrams, so that one has two equivariant summands of $X$ of the form $(B_{stst}(2),+1)$, and two of the form $(B_{stst}(-2),+1)$.
This yields the black term $(2v^2 + 2v^{-2}) b_{stst}$ above.

Consider $V^{-2}(X, B_{st})$. The space $Y^{-2}(X,st)$ is six-dimensional, spanned by the $\si$-conjugates of these two light leaves. \igc{1}{G2inD4proof6} Again, there is no kernel for
$\be$, because there are no summands $B_x(k)$ with $k<-2$ and $x>st$. The action of $G$ permutes the $\si$-conjugates of each picture, so that this six-dimensional representation
is isomorphic to $V_{\std} \oplus V_{\std}$. This yields the red term $6(v^{2}+v^{-2}) b_{st}$.

Consider $V^{-1}(X, B_{u_1 stst})$. The space $Y^{-1}(X,u_1stst)$ is one-dimensional, spanned by this picture. \igc{1}{G2inD4proof3} There is no kernel for degree reasons. The two other
$\si$-conjugates of this map live in different Hom spaces, and together these three projections yield the blue term $(v+v^{-1}) (b_{u_1 stst} + b_{u_2 stst} + b_{u_3 stst})$. When $G =
\ZM/3\ZM$ there is only one equivariant structure possible. When $G = S_3$ we need to determine the action of the stabilizer (switching $u_2$ and $u_3$) on this one-dimensional space. The
reader can verify that, because of the Zamolodzhikov relation \eqref{A3} applied to the central green square, the action of this stabilizer is trivial.

Consider $V^{-1}(X,B_{v u_1 u_2 st})$. The space $Y^{-1}(X,vu_1u_2st)$ is three-dimensional. Two of the three light leaves spanning this space are \igc{1}{G2inD4proof9} Any contribution
to the kernel of $\be$ must come from $V^{-2}(X,B_{stst})$ composed with a degree $+1$ map in the kernel of the LIP. Considering the two maps in \eqref{eq:G2inD4proof1}, and applying a dot
to the first $u_3$ strand (which must be in the kernel of the LIP, as there are no negative degree maps to balance it out), one obtains the two maps pictured here. Therefore,
$V^{-1}(X,B_{vu_1u_2st})$ is one-dimensional, spanned by the remaining light leaf map, which is \igc{1}{G2inD4proof10} This yields the blue term $(v+v^{-1}) (b_{v u_1 u_2 st} + b_{v u_1
u_3 st} + b_{v u_2 u_3 st})$.When $G=S_3$ we need to determine the action of the stabilizer (switching $u_1$ and $u_2$) on this one-dimensional space. Applying \eqref{A3} and
\eqref{assoc2} several times one can see that this action is trivial.

Consider $V^0(X, B_{stst})$. The space $Y^0(X,stst)$ is nine-dimensional. We give three maps below; the other 6 are the $\si$-conjugates of these. \igc{1}{G2inD4proof2} However, there is a
three-dimensional subspace in the kernel of $\be$. One element in the kernel is obtained by taking the degree $-1$ map to $\un{u_1 stst}$, and applying a dot
to the first strand. This dot is in the kernel, as no maps of negative degree exist to balance it out. \igc{1}{G2inD4proof4} The other two generators of the kernel are the $\si$-conjugates
of this. Expanding this diagram in the nine-dimensional basis above is an exercise in the relations of $\DC$, but is not relevant here. What is obvious is that the original
nine-dimensional space gives three copies of the standard representation of $G$, and the three-dimensional kernel is a copy of the standard representation inside. What remains is a sum of
two copies of the standard representation. This yields the red term $6b_{stst}$.

Consider $V^0(X, B_{st})$. The space $Y^0(X,st)$ is eighteen-dimensional, and the kernel of $\be$ is six-dimensional. We will not bother to write down the basis of either space, which we
leave as a combinatorial exercise to the intrepid reader, but we will at least record the basis of $V^0(X,B_{st})$.

Here are the three projection maps which are $G$-fixed, giving the black term $3 b_{st}$. \igc{1}{G2inD4proof7} The cautious reader may worry if two of these maps are in the kernel, as
they appear to factor through the degree $-2$ maps of \eqref{eq:G2inD4proof1}, followed by this degree $+2$ map. \igc{1}{G2inD4proof8} However, this degree $+2$ map is not in the kernel,
as it pairs against a map in $\Hom^{-2}(B_s B_t B_s B_t, B_{st})$. This ``appearance" of being in the kernel was an artifact of working with objects of $\DC$ rather than
indecomposables in the Karoubi envelope.

Here is a projection map which, along with its $\si$-conjugates, forms a subrepresentation isomorphic to $V_{\std}$, yielding the red term $3 b_{st}$. \igc{1}{G2inD4proof11}

Here is a projection map which, along with its $S_3$-conjugates, forms a subrepresentation isomorphic to $V_{\reg}$, yielding the green term $6 b_{st}$. \igc{1}{G2inD4proof12}

\subsection{The weighted character map}
\label{subsec-wtchar}

We continue to assume that $G$ is abelian and that $\si$ acts transitively on the orbits of $G$.

We have finally proven that $\HC(W,S,L) \cong [\DC]_\si$. Let $(M,\phi) \in \DC_G$. There is some expression $[M,\phi] = \sum_{x \in W} P_{(M,\phi),x} H_x$ for some polynomials $P_x \in
\ZM[v,v^{-1}]$. We now reinterpret $P_x$ as the trace of $\si$ on a certain graded vector space.

The usual character map was discussed in \S\ref{subsec-catfnthms}. That is, for an object $M$ in the Karoubi envelope of $\DC$, its character in $\HC(W')$ is given as $\sum P_{w,M}
H_w$, where $P_{w,M}$ is the graded dimension of $\Hom(M,B_w)$ modulo $\DC_{< w}$. As discussed in that section, one can compute the characters of objects in $\DC$ using light leaves.

Now let $w \in W \subset W'$ be $G$-fixed, and let $(M,\phi) \in \DC_G$. There is an action of $G$ on $\Hom(M,B_w)$, analogous to \eqref{Gxaction}, which sends $f \co M \to B_w$ to the
composition \[\psi_{B_w}^{-1} \circ g(f) \circ \phi \co M \to gM \to gB_w \to B_w. \]

\begin{claim} \label{wtcharmap} $P_{(M,\phi), x}$ is the trace of $\si$ acting on the graded vector space $\Hom(M,B_w)$ modulo $\DC_{< w}$. In other words, define the \emph{weighted
character map} as the map $[\DC]_\si \to \HC(W,S,L)$, which sends $[M,\phi]$ to $\sum_{x \in W} P_{(M,\phi),x} H_x$ where $P$ is the trace just defined. Then it is the inverse
isomorphism to the isomorphism defined by sending the Kazhdan-Lusztig generator $c_s \in \HC(W,S,L)$ to $[B_s,+1]$. \end{claim}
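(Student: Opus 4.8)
The plan is to deduce everything from Theorem~\ref{thm:main}, which already furnishes a $\Zvv$-algebra isomorphism $\Psi\colon \HC(W,S,L)\xrightarrow{\sim}[\DC]_\si$ with $\Psi(b_w)=[B_w,+1]$ for every $w\in W$. By definition of the standard basis, the polynomial $P_{(M,\phi),x}$ in the statement is the coefficient of $H_x$ in $\Psi^{-1}[M,\phi]$; write it $A_x(M,\phi)$, and write $B_x(M,\phi)$ for the graded trace of $\si$ acting, via the analogue of \eqref{Gxaction}, on $\Hom(M,B_x)$ modulo $\DC_{<x}$ (and modulo the $R_+$-action, i.e.\ on the span of the light leaves). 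This makes sense since $x\in W$ is $G$-fixed, so $G$ preserves $\DC_{<x}$ and $R_+$ and acts on the quotient. Both $A_x$ and $B_x$ are additive on direct sums, and both are multiplied by $\xi(\si)$ when $(M,\phi)$ is replaced by its twist $\xi(M,\phi)$ for $\xi\in G^*$: for $A_x$ this is the defining relation of $[\DC]_\si$, and for $B_x$ it is because the twist rescales the structure maps $\phi_g$, hence the $G$-action on $\Hom(M,B_x)$, by $\xi$. Since $G$ is abelian, every indecomposable $(B_w\sqot V)$ of $\DC_G$ is a $G^*$-twist of some $(B_w,+1)$ with $w\in W'$ (restriction $G^*\to\widehat{G_w}$ being surjective), so it suffices to prove $A_x=B_x$ on the objects $(B_w,+1)$ with $w$ ranging over representatives of $W'/G$.

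If the orbit $Gw$ is not a singleton, then $[B_{Gw},+1]=0$ in $[\DC]_\si$ by Proposition~\ref{prop:basis}, so $A_x(B_{Gw},+1)=0$; and $\si$ permutes the $|Gw|$ blocks of $\Hom(B_{Gw},B_x)/\DC_{<x}$ cyclically with no fixed block, so $B_x(B_{Gw},+1)=0$ too. Thus the whole claim reduces to the single assertion that
\[
\operatorname{wch}([B_w,+1])\;:=\;\sum_{x\in W}B_x(B_w,+1)\,H_x\;=\;b_w\qquad\text{for all }w\in W,
\]
i.e.\ that the trace of $\si$ on $\Hom(B_w,B_x)/\DC_{<x}$ is the unequal-parameter Kazhdan--Lusztig polynomial $P_{x,w}$. (Equivalently, the content of the previous paragraph may be organized through Corollary~\ref{cor:trace} and Claim~\ref{claim:multiplicity}: the light-leaves space $\Hom(M,B_x)/\DC_{<x}$ decomposes $G$-equivariantly, up to blocks on which $\si$ is traceless, as $\bigoplus_{w\in W}V^\bullet(M,B_w)\otimes\bigl(\Hom(B_w,B_x)/\DC_{<x}\bigr)$, so that $A_x(M,\phi)=B_x(M,\phi)$ once the displayed identity is known.)

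\textbf{Proving $\operatorname{wch}([B_w,+1])=b_w$.} I would check the two defining properties of $b_w$: bar-invariance, and congruence to $H_w$ modulo $\HC_{>0}$. The second is the easy half: the coefficient of $H_w$ is the trace of $\si$ on $\Hom(B_w,B_w)/\DC_{<w}=\Bbbk\cdot 1_{B_w}$, which is $1$ since $\si$ fixes the identity; and for $x\neq w$ the space $\Hom(B_w,B_x)/\DC_{<x}$ vanishes in non-positive degrees by the graded orthonormality of Hom spaces (a consequence of the Soergel conjecture, \S\ref{subsec-catfnthms}), so the $H_x$-coefficient lies in $v\Bbbk[v]$. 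For bar-invariance one uses that $(B_w,+1)$ is $\DM$-self-dual in $\DC_G$ (because $B_w$ is self-dual in $\DC$ and the trivial character of $G_w$ is its own dual), together with the fact that the weighted character map intertwines $\DM$ with the bar involution on $\HC(W,S,L)$. Given that the ordinary character map already intertwines $\DM$ with the bar involution on $\HC(W',S',\ell)$ (Theorem~\ref{SCT}), this last point reduces to the compatibility of $\DM$ with the $\si$-action on the light-leaves spaces. Once bar-invariance and the congruence are in hand, the uniqueness in the definition of the Kazhdan--Lusztig basis (valid over $\Bbbk[v,v^{-1}]$) forces $\operatorname{wch}([B_w,+1])=b_w$, and in particular its coefficients are the integral polynomials $P_{x,w}$.

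\textbf{The main obstacle.} It is precisely this compatibility of the weighted character map with duality. I expect it is cleanest to establish after localizing the realization at the generic point of $\hg$ (or at the torus fixed points): there $\DM$ and the bar involution both become explicit rescalings, the light leaves acquire a standard form adapted to the ``standard modules,'' and the $G$-action can be tracked directly. The subtlety to be handled is that light leaves for different reduced expressions agree only modulo $\DC_{<x}$, so the $G$-action must be followed on the whole double-leaves filtration rather than on individual diagrams; this is the ``in-depth discussion of localization and light leaves'' I would develop separately, and it is the only non-formal ingredient in the argument.
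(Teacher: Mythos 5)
Your reduction has a genuine logical gap, and it is one the paper's own argument is careful to avoid. You reduce the claim to the equality $A_x = B_x$ on the objects $(B_w,+1)$ with $w\in W$, and then announce that this is equivalent to showing $\operatorname{wch}([B_w,+1]) = b_w$, i.e.\ $B_x(B_w,+1) = P_{x,w}$. But $A_x(B_w,+1)$ is the coefficient of $H_x$ in $\psi^{-1}([B_w,+1])$, so your reduction silently uses the additional input that $\psi^{-1}([B_w,+1]) = b_w$, i.e.\ that $\psi$ sends KL basis elements to indecomposables. You cite Theorem~\ref{thm:main} for this, but the theorem's proof only establishes (independently of the claim) that $\psi$ is an algebra isomorphism and that $\psi(b_w) = [B_w,+1] + (\text{lower terms})$; the vanishing of lower terms is deduced \emph{from} the claim in the paragraph following it. So your argument is circular as written. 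Nor does the direct assertion $\operatorname{wch}([B_w,+1]) = b_w$ close the loop on its own: it shows $\operatorname{wch}$ is a $\Zvv$-linear bijection sending $[B_w,+1]\mapsto b_w$, but without knowing $\operatorname{wch}$ is an algebra map (or, equivalently, without the vanishing of lower terms), one cannot identify $\operatorname{wch}^{-1}$ with $\psi$, since linear maps agreeing on the generating set $\{b_s\}$ need not agree.

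The paper's sketch sidesteps this precisely by \emph{not} reducing to indecomposables. It reduces (via additivity and $G^*$-equivariance) to tensor products $(B_{s_1},+1)\otimes\cdots\otimes(B_{s_d},+1)$, observes that $\operatorname{wch}([\IM,+1])=1$, and then proves the inductive step that $\operatorname{wch}$ intertwines $\,\cdot\otimes (B_s,+1)$ with $\,\cdot\,c_s$, using the light-leaf philosophy to track the $G$-action on the basis of $\Hom(M\otimes\un{x},B_w)$ modulo lower terms. Combined with the already-established bijectivity of $\psi$, this immediately gives $\operatorname{wch}\circ\psi = \id$ on monomials and hence everywhere, with no need to establish bar-invariance of $\operatorname{wch}$ or the equality $\psi(b_w)=[B_w,+1]$ in advance. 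Your approach, in contrast, defers the ``only non-formal ingredient'' to a compatibility of $\operatorname{wch}$ with $\DM$, which is not asserted in the paper and would itself require a light-leaf or localization argument comparable in difficulty to the paper's induction. If you want to salvage your strategy, the cleanest repair is to replace the bar-invariance step with a direct proof that $\operatorname{wch}$ is a ring homomorphism; but at that point you have reinvented the paper's induction.
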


If Claim \ref{wtcharmap} is true then, assuming that $[B_w] = b_w$ under the isomorphism $[\DC] \cong \HC(W')$, it is easy to deduce that $[B_w,+1] = c_w$ under the isomorphism
$[\DC]_\si \cong \HC(W,L)$. After all, we need only show that $[B_w, +1]$ is self-dual, and that the polynomials $P_{(B_w,+1),x}$ are concentrated in strictly positive degrees (except
when $x=w$). The self-duality follows immediately from the self-duality of $B_w$ (see also the discussion of duality in \S\ref{subsec-indecomposable}). The morphism spaces
$\Hom(B_w,B_x)$ are known to be concentrated in strictly positive degrees by the Soergel Hom formula, and taking the trace under any element will not change this fact.

We sketch a proof of Claim \ref{wtcharmap}, which imitates the non-equivariant proof (using light leaves) that the character map from $[\DC]$ to $\HC(W')$ is the inverse isomorphism.
Both sides (the weighted character map and the weighted Grothendieck group) are compatible with direct sum decompositions and with twisting by a character in $G^*$. Using this, we can
reduce to the case where $(M,\phi)$ is a tensor product of the generating objects $(B_s,+1)$ for $s \in S$. The weighted character map clearly sends $[\IM,+1]$ to $1$. One need only
check an induction step: that the weighted character map intertwines the tensor product by $(B_s,+1)$ and multiplication by $c_s$.

Let us temporarily assume that the Coxeter embedding is standard, so that $B_s$ is actually an object $\un{s}$ in $\DC$ instead of its Karoubi envelope (i.e. it is a sequence of distant
reflections). Then the induction step follows from the ``light leaf philosophy.'' Essentially, the claim is that, for an arbitrary sequence $\un{x}$ in $\DC$, $\Hom(M \ot \un{x}, B_w)$ has a basis modulo $\DC_{< w}$ given by diagrams of the following form.

\[{
\labellist
\small\hair 2pt
 \pinlabel {$M$} [ ] at 31 5
 \pinlabel {$z$} [ ] at 32 41
 \pinlabel {$w$} [ ] at 59 77
 \pinlabel {$\un{x}$} [ ] at 101 4
\endlabellist
\centering
\ig{1}{LLphilosophy}
}\]

\noindent Here, the bottom trapezoid is a map in a basis for $\Hom(M,B_z)$ modulo $\DC_{< z}$, for some other element $z$. Then, one chooses a subexpression $\eb$ of $\un{x}$,
expressing an element $y$ with $zy = w$. The light leaf algorithm states how to construct the top trapezoid, a map from $B_z \ot \un{x} \to B_w$. This algorithm works one simple
reflection at a time. For the first simple reflection in $\un{x}$, the subexpression $\eb$ has determined whether we take it or leave it, and the element $z$ determines whether that
reflection would go up or down in the Bruhat order. For each of the four possibilities, there is a diagram which is the first tier of the trapezoid. This handles the special case where
$\un{x}$ has length $1$; the general case just iterates this procedure.

Now suppose that $\un{x}$ is the sequence of commuting simple reflections expressing $s \in S$. The subexpression $\eb$ of $\un{x}$ is determined by a subset $J \subset I(s)$,
specifying which simple reflections are included and which excluded. If $z \in W$ is $G$-invariant, then $z$ is either a minimal or a maximal coset representative for the parabolic
subgroup determined by $I(s)$. Thus, for each simple reflection in $\un{x}$, whether it goes up or down is determined by $z$, not by the subset $J$. In particular, the action of $G$ on
the upper trapezoids is the same as the action of $G$ on the formal span of subsets $J \subset I(s)$.

In general, the space of morphisms $\Hom(M \un{x}, B_w)$ modulo $\DC_{< w}$ is the direct sum, over all $z$, of the tensor product of $\Hom(M, B_z)$ with the formal span of subsets $J
\subset I(s)$ for which $zy = w$. The action of $G$ will permute the various $z$ and the various $J$ accordingly. As can easily be seen, a large portion of this representation is
automatically traceless for $\si$. Only the terms where $z \in W$ and $w \in W$, and $J = \emptyset$ or $J = I(s)$ will contribute to the trace. These two terms, the ``multiplication by
$1$" and the ``multiplication by $s$" terms, agree with the two terms that appear when you multiply a standard basis element by $c_s$.

When the Coxeter embedding is not standard, a similar argument can be made. There are \emph{generalized light leaves} (due to the author and Williamson) which describe morphisms between
tensor products of longest elements of parabolic subgroups, not just tensor products of simple reflections, and they follow an analogous light leaf philosophy, constructed by a tiered
algorithm. They have not yet appeared in the literature, though hopefully this will be remedied soon. The proof using generalized light leaves is very similar, only instead of
considering the action of $G$ on subset $J \subset I(s)$, one considers the action of $G$ on the parabolic subgroup generated by $I(s)$. This is very similar to the sketched argument
made in \S\ref{subsec-quadraticII}.

\bibliographystyle{plain}
\bibliography{mastercopy}

\end{document}